\newtheorem{thm}{Theorem}
\newtheorem{lem}{Lemma}
\newtheorem{prop}{Proposition}
\numberwithin{equation}{section} \numberwithin{lem}{section}
\numberwithin{thm}{section} \numberwithin{prop}{section}
\numberwithin{cor}{section} \numberwithin{rem}{section}
\title[Field Model for Complex Ionic Fluids]{Field Model for Complex Ionic Fluids: Analytical Properties and Numerical Investigation}
\author{Jian-Guo Liu$^{\,1}$, Jinhuan Wang$^{\,2}$, Yu Zhao$^{\,3}$ and Zhennan Zhou$^{\,4}$}
\thanks{The work of Jian-Guo Liu was partially supported by KI-Net NSF RNMS grant No.1107291, NSF	DMS  grant No. 1514826.}
\thanks{The work of Jinhuan Wang was partially supported by Key Project of Education Department of Liaoning Province (Grant No. LZD201701).}
\thanks{Corresponding author: Yu Zhao.  }
\begin{document}
	\maketitle
	\begin{center}
		{\footnotesize
			1-Department of Physics and Department of Mathematics, Duke University, \\
			Durham, NC 27708. USA.
			email: jliu@phy.duke.edu \\
			\smallskip
			2-School of Mathematics, Liaoning University, Shenyang, 110036, P. R. China. \\
			email: wangjh@lnu.edu.cn \\
			\smallskip
			3-School of Mathematical Sciences, Peking University, Beijing, 100871, P.R.China. \\  
			email: y.zhao@pku.edu.cn \\
			\smallskip
			4-Beijing International Center for Mathematical Research, Peking University, Beijing,\\ 100871, P.R.China. 
			email: zhennan@bicmr.pku.edu.cn 
		}
	\end{center}
	\maketitle
	\date{}
	\begin{abstract}
	In this paper, we consider the field model for complex ionic fluids with an energy variational structure, and analyze the well-posedness to this model with regularized kernels. Furthermore, we deduce the estimate of the maximal density function to quantify the finite size effect. On the numerical side, we adopt a finite volume scheme to the field model, which satisfies the following properties: positivity-preserving, mass conservation and energy dissipation. Besides, series of numerical experiments are provided to demonstrate the properties of the steady state and the finite size effect  by showing the equilibrium profiles with different values of the parameter in the kernel.
	\end{abstract}
	
	{\small {\bf Keywords:} Complex ionic fluids, variational structure, finite size effect, finite volume method.
	}
	
	\section{Introduction}
	\label{sec1}
	Nearly all biological processes are related to ions \cite{liu2010}. The electrokinetic system for ion transport in solutions is an important model in medicine and biology \cite{Alverts1994,Boron2008}. The transport and distribution of charged particles are crucial in the study of many physical and biological problems, such as ion particles in the electrokinetic fluids \cite{Jin2007}, and ion channels in cell membranes \cite{Bazant2004,Eisenberg1996}. In this paper, we consider the field equations for complex ionic fluids derived from an energetic variational method EnVarA (energy variational analysis) which combines Hamilton's least action and Rayleigh's dissipation principles to create a variational field theory \cite{liu2010}.
	
	In EnVarA, the free energy of the field systems which is denoted by $\mathcal{F}$ for complex ionic fluids is written in the Eulerian framework 
	\begin{equation}
	\label{free1}
	\mathcal{F}(c_m(\cdot,  t)) = \int_{\Omega} \left\{ \sum_{m = 1}^M c_m \log c_m + \phi_{\text{ES}}(\cdot) + \psi_{\text{FSE}}(\cdot) \right\} \,\mathrm{d} \boldsymbol{x},
	\end{equation}
	where $c_m = c_m(\boldsymbol{x}, t)$, $m=1,...,M$, be the concentration of the m-th ionic species where $\boldsymbol{x} \in \Omega \subset \mathbb{R}^d$ indicates the location and $t > 0$ indicates the time \cite{liu2010}. The first part of the right hand of (\ref{free1}) is the entropy term which describes the particle Brownian motion of the ions. And the second part $\phi_{\text{ES}}(\cdot)$ is the electrostatic potential where the electric field is created by the charge on different ionic species in most cases we considered. In addition, we focus on the steric repulsion arising from the finite size of solid ions\cite{fawcett2004liquids,Lee2008,KS1991,barthel1998physical}, which is the last term of (\ref{free1}). Here all physical parameters are set as 1 for simplicity in representation. Furthermore, additional free energy due to physical effects such as screening\cite{jnc1999} can also be included in (\ref{free1}), which leads to different field equations. The field equations might either be defined on the whole domain $\mathbb{R}^n$ or a bounded domain $\Omega$ equipped with certain physical boundary conditions. However, proposing an appropriate boundary condition is a task of great difficulty as well as an interesting research subject. In this paper, we consider only the unbounded domain $\mathbb{R}^n$ and focus on the generalized field model. We remark that, there have been other ways of modeling ionic and water flows when considering voids, polarization of water, and ion-ion and ion-water correlations \cite{Eisenberg2020, Eisenberg2017}.
	
	The chemical potential $\psi_m$ of the m-th ionic species is described by the variational derivative
	\begin{equation}
	\label{potential}
	\psi_m = \dfrac{\delta \mathcal{F}(c_m(\cdot, t))}{\delta c_m}
	\end{equation}
	and is referred to in channel biology as the "driving force" for the current of the m-th ionic species\cite{liu2010}. Then EnVarA gives us both the equilibrium and the non-equilibrium (time dependent) equations for complex ionic fluids as follows,
	\begin{align}
	& \text{equilibrium : } 0 = \nabla \cdot (c_m \nabla \psi_m), ~~ m = 1, \cdots, M, 
	\label{equ} \\
	& \text{nonequilibrium (time dependent): } \partial_t c_m = \nabla \cdot (c_m \nabla \psi_m), ~~ m = 1, \cdots, M.
	\label{nonequ} 
	\end{align}
	
	In this paper, we consider the steric repulsion in the following form $$\psi_{\text{FSE}}(\cdot) = \frac{1}{2} \theta(\boldsymbol{x})(\mathcal{W}*\theta)(\boldsymbol{x}),$$
	where the total density $\theta(\boldsymbol{x}) := \sum_{m = 1}^{M} c_m$ and the electrostatic potential 
	$$\phi_{\text{ES}}(\cdot) = \frac{1}{2} \rho(\boldsymbol{x}) (\mathcal{K}*\rho)(\boldsymbol{x}),$$ 
	where the charge density $\rho(\boldsymbol{x}) := \sum_{m = 1}^{M} z_m c_m$ with $z_m \in \mathbb{Z}$ being the valence of the $m$-th ionic species. Then the free energy (\ref{free1}) of the field model for complex ionic fluids is given by the following functional
	\begin{align}
	\label{free2}
	\mathcal{F}(c_m(\cdot,t))
	=&\sum_{m=1}^M \int_{\mathbb{R}^d} c_m\log c_m\,\mathrm{d} \boldsymbol{x}
	+\frac{1}{2}\int_{\mathbb{R}^d}\rho(\boldsymbol{x}) (\mathcal{K}*\rho)(\boldsymbol{x})\,\mathrm{d} \boldsymbol{x}
	+\frac{1}{2}\int_{\mathbb{R}^d}\theta(\boldsymbol{x})(\mathcal{W}*\theta)(\boldsymbol{x})\,\mathrm{d}\boldsymbol{x}.
	\end{align}
	Kernel $\mathcal{K}(\boldsymbol{x})$ in (\ref{free2}) represents the effect of the  electrostatic potential while kernel $\mathcal{W}(\boldsymbol{x})$ represents the effect of the steric repulsion arising from the small size. Here, explicit write out (\ref{nonequ}) using $\mathcal{K}(\boldsymbol{x}), \mathcal{W}(\boldsymbol{x}), \theta(\boldsymbol{x})$ and $\rho(\boldsymbol{x})$ etc. In fact, the convolution terms make it difficult to derive explicit differential equations between the field functions and the charge density $\rho(\boldsymbol{x})$ except when the kernel is Newtonian, i.e. 
	\begin{equation}
	\label{K}
	\mathcal{K}(\boldsymbol{x}) = 
	\left\{
	\begin{array}{ll}
	-\dfrac{1}{2 \pi} \ln |\boldsymbol{x}|, & d = 2, \\
	\dfrac{1}{d (d-2) \alpha(d) |\boldsymbol{x}|^{d - 2}}, & d \geqslant 3,
	\end{array}
	\right.
	\end{equation}
	in which case, the electrostatic field potential function $\Phi_{\mathcal{K}}(\boldsymbol{x})$ related to the concentrations of the ions is determined by Gauss's law, i.e.
	\begin{align}
	\label{phik}
	-\Delta \Phi_{\mathcal{K}}(\boldsymbol{x}) = \rho(\boldsymbol{x}).
	\end{align}
	Whereas, for the steric repulsion there is no clear way to reformulate the convolution with the help of an auxiliary potential equation. Hence, in this work, without loss of generality, we focus on the Cauchy problem of the field model for complex ionic fluids and the initial conditions can be given as follows,
	\begin{equation}
	\label{recini}
	c_m(\boldsymbol{x}, 0)=c^0_{m}(\boldsymbol{x}), ~~m=1,...,M.
	\end{equation}
	We aim to investigate the transport of ions modeled by EnVarA both theoretically and numerically.
	
	It is worth emphasizing that the Poisson-Nernst-Planck (PNP) equations, which are widely used by many channologists \cite{Eisenberg1996} to describe the transport of ions through ionic channels and by physical chemists \cite{Bazant2004}, can be derived by such variational method as well, simply by setting $\psi_{\text{FSE}}(\cdot) = 0$. The PNP equations describe the transport of an ideal gas of point charges. However, due to the lack of incorporating the nonideal properties of ionic solutions, the PNP equations can not describe electrorheological fluids containing charged solid balls or some other complex fluids in biological applications properly.
	
	In contrast to the limited studies and the partial understanding of the Cauchy problem of the field model (\ref{nonequ})(\ref{recini}), on one hand, as for the nonlinear nonlocal equations with a gradient flow structure, Carrillo, Chertock and Huang  \cite{Carrillo2014} proposed a positivity preserving entropy decreasing finite volume scheme. On the other hand, for the initial boundary value problem of the PNP equations without small size effect, there have been quit a few numeric studies in the recent years. 
	For instance, Liu and Wang \cite{Liu2014} have designed and analyzed a free energy satisfying finite difference method for solving PNP equations in a bounded domain that are conservative, positivity preserving and of the first order in time and the second order in space. 
	Later, a discontinuous Galerkin method for the one-dimensional PNP equations \cite{Liu2017} has been proposed. Both of them satisfy the positivity preserving property and the discrete energy decay estimate under a parabolic CFL condition $\Delta t = O((\Delta x)^2)$. 
	Furthermore, Flavell, Kabre and Li \cite{Flavell2017} have proposed a finite difference scheme that captures exactly (up to roundoff error) a discrete energy dissipation and which is of the second order accurate in both time and space. 
	Besides, a finite element method using a method of lines approached developed by Metti, Xu and Liu \cite{Metti2016} enforces the positivity of the computed solutions and obtains the discrete energy decay but works for the certain boundary while the scheme developed by Hu and Huang \cite{hu2019} works for the general boundaries. etc.  
	
	In this paper, besides the basic properties of the equilibrium and non-equilibrium state of the field model (\ref{nonequ})(\ref{recini}), such as positivity-preserving, mass conservation and free energy dissipation, we also analyze the existence of the solutions to this model and the properties of the steady state and estimate the maximal density function to quantify the finite size effect theoretically, while to supplement this, reliable numerical simulations are necessary to explore such phenomenon.
	We consider a finite volume scheme to the field model (\ref{nonequ})(\ref{recini}) to preserve the basic physical properties of the ionic fluid equations. The small size effect can also be demonstrated by such numerical scheme. The scheme is of the first order in time and the first order in space while the generalization to higher order schemes in space is of no difficulty. Also higher order in time can be obtained by the strong stability preserving (SSP) Runge-Kutta methods. Another challenge in numerical simulation of our field model (\ref{nonequ})(\ref{recini}) is the handling for the high order singularity of the kernel $\mathcal{K}(\boldsymbol{x})$ and $\mathcal{W}(\boldsymbol{x})$. Here we just deal with the singularity preliminarily so as to grasp the effect of the finite size effect. More appropriate methods will only be discussed in future papers.
	
	When considering the way of modeling ionic and water flows in \cite{Eisenberg2020}, the correlated electric potential is obtained by making some modifications to the electrostatic potential function $\Phi_{\mathcal{K}}(\boldsymbol{x})$ and taking $\mathcal{W} = 0$ at the same time. We show the proposed scheme also applies to such a modeling scenario with little extra effort, and some preliminary numerical explorations are provided.
	
	The rest of the paper is organized as follows. The field model for complex ionic fluids we considered in this paper is recalled and analyzed in Chapter 2. We show the basic properties of the Cauchy problem of the field model (\ref{equ}) or (\ref{nonequ})(\ref{recini}). Furthermore, the well-posedness of the model (\ref{nonequ})(\ref{recini}) is captured when we take the electrostatic potential $\phi_{\text{ES}}(\cdot)$ and the repulsion of finite size effect $\psi_{\text{FSE}}(\cdot)$ as Newtonian and Lennard-Jones form respectively. In Chapter 3, we consider a finite volume scheme to the field system (\ref{nonequ})(\ref{recini}) in 1D in the semi-discrete level, and prove its properties : positivity-preserving, mass conservation and discrete free energy dissipation. In the same section, the fully discrete scheme, and the extension to the 2D cases are also discussed. In Chapter 4 we verify the properties of our numerical method with numerous test examples and provide series of numerical experiments to demonstrate the small size effect in the model. Concluding remarks and the expectations in the following research are given in Chapter 5.

	\section{The Field System And Its Properties}
\label{sec2}
Considering the free energy functional (\ref{free2}), the chemical potential $\psi_m$ of the m-th ionic species is described by the variational derivative (\ref{potential}) which is calculated as follows:
\begin{align}
\label{potential1}
\psi_m = \dfrac{\delta \mathcal{F}(c_m(\cdot, t))}{\delta c_m} &= 1 + \log c_m + z_m \Phi_{\mathcal{K}}(\boldsymbol{x}) + \Phi_{\mathcal{W}}(\boldsymbol{x}), \\
&= 1 + \log c_m + z_m (\mathcal{K}*\rho)(\boldsymbol{x}) + (\mathcal{W}*\theta)(\boldsymbol{x}), ~~m = 1, \cdots, M.
\end{align} 
With proper initial conditions (\ref{recini}), which we recall here for convenience, the field system for complex ionic fluids are given by
\begin{align}
\partial_t c_m(\boldsymbol{x}, t) &= \nabla \cdot (c_m \nabla \left( 1 + \log c_m + z_m (\mathcal{K}*\rho)(\boldsymbol{x}) + (\mathcal{W}*\theta)(\boldsymbol{x}) \right)),~~m=1,\cdots,M, \label{model1} \\
\rho(\boldsymbol{x}) &= \sum_{m = 1}^{M} z_m c_m, ~~\theta(\boldsymbol{x}) = \sum_{m = 1}^{M} c_m, \\
c_m(\boldsymbol{x},0) &= c^0_{m}(\boldsymbol{x}), ~~m=1,\cdots,M. \label{model2}
\end{align} 

\subsection{Basic Properties for the Multi-ionic Species Case}
Here we show some properties of the field model (\ref{model1})-(\ref{model2}) for complex ionic fluids.

The first two properties are related to the positivity-preserving and the conservation of mass.
\begin{prop}\label{prop0}(Positivity-Preserving)
	Let initial data $c_m^0$, $m = 1,\cdots,M$, be non-negative functions. Then solutions $c_m$ to (\ref{model1})-(\ref{model2}) are still non-negative.
\end{prop}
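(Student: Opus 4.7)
The plan is to recast (\ref{model1}) as a linear Fokker--Planck-type equation with a prescribed drift and carry out a standard negative-part energy estimate. Using the identity $c_m \nabla \log c_m = \nabla c_m$, rewrite each species equation as
$$\partial_t c_m = \Delta c_m + \nabla \cdot (c_m \vec{b}_m), \qquad \vec{b}_m := \nabla\!\left(z_m (\mathcal{K}*\rho) + (\mathcal{W}*\theta)\right),$$
and treat $\vec{b}_m$ as a given vector field whose regularity is inherited from the (assumed sufficiently smooth) solution.

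Introduce the convex $C^{1,1}$ function $\eta(s) = \tfrac{1}{2}(s^-)^2$, where $s^- := \max(-s,0)$, so that $\eta'(s) = -s^-$ and $\eta''(s) = \chi_{\{s<0\}}$. Multiply the equation by $\eta'(c_m)$, integrate over $\mathbb{R}^d$, and integrate by parts twice, using the chain-rule identity $\chi_{\{c_m<0\}}\, c_m \nabla c_m = \tfrac{1}{2}\nabla\!\left((c_m^-)^2\right)$, to obtain
$$\frac{d}{dt}\int_{\mathbb{R}^d} \eta(c_m)\,\mathrm{d}\boldsymbol{x} = -\int_{\mathbb{R}^d} \chi_{\{c_m<0\}} |\nabla c_m|^2 \,\mathrm{d}\boldsymbol{x} + \tfrac{1}{2}\int_{\mathbb{R}^d} (c_m^-)^2 \,(\nabla\cdot\vec{b}_m)\,\mathrm{d}\boldsymbol{x}.$$
Dropping the non-positive dissipative term and summing over $m$ yields a differential inequality
$$\frac{d}{dt} E(t) \le C(t)\, E(t), \qquad E(t) := \sum_{m=1}^{M} \int_{\mathbb{R}^d} (c_m^-)^2\,\mathrm{d}\boldsymbol{x}.$$
Since $c_m^0 \ge 0$ implies $E(0)=0$, Gronwall's lemma forces $E(t)\equiv 0$, which gives $c_m(\cdot,t)\ge 0$ for all $m$ and $t>0$.

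The main obstacle is verifying that $C(t) \sim \max_m \|\nabla\cdot\vec{b}_m\|_{L^\infty}$ is locally integrable in time. Noting $\nabla\cdot\vec{b}_m = z_m \Delta(\mathcal{K}*\rho) + \Delta(\mathcal{W}*\theta)$, this reduces either to Poisson's equation $\Delta\Phi_{\mathcal{K}} = -\rho$ in the Newtonian case, or to $\Delta(\mathcal{W}*\theta) = (\Delta\mathcal{W})*\theta$ for smooth regularized $\mathcal{W}$, and then to $L^1$/$L^\infty$ bounds on $\rho$ and $\theta$ supplied by the mass conservation and regularity furnished by the subsequent well-posedness analysis. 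An alternative, more classical route would avoid the energy estimate entirely and apply a parabolic maximum principle to the shifted quantity $\tilde c_m := c_m + \varepsilon e^{\lambda t}$: at a would-be first touching point of zero one has $\partial_t \tilde c_m \le 0$, $\nabla \tilde c_m = 0$, and $\Delta \tilde c_m \ge 0$, contradicting the PDE for $\lambda$ chosen large enough in terms of the drift, after which $\varepsilon \to 0^+$ concludes.
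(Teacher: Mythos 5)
The paper does not actually supply a proof of this proposition: it states that the proofs of Propositions \ref{prop0} and \ref{prop1} ``are standard, which we omit in this paper.'' So there is nothing to compare against line by line; the question is only whether your argument correctly fills the gap, and it essentially does. Rewriting the equation as $\partial_t c_m = \Delta c_m + \nabla\cdot(c_m \vec b_m)$ with $\vec b_m = \nabla(z_m\mathcal{K}*\rho + \mathcal{W}*\theta)$ is the right decomposition, your energy identity for $\eta(s)=\tfrac12 (s^-)^2$ is computed correctly (the key identity $\chi_{\{c_m<0\}}c_m\nabla c_m = \tfrac12\nabla((c_m^-)^2)$ is valid for $H^1$ functions in the Stampacchia sense), and Gronwall then closes the argument. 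The two caveats you should make explicit are: (a) the integrations by parts on all of $\mathbb{R}^d$ require decay of $c_m$, $\nabla c_m$ and $\vec b_m$ at infinity, which is consistent with the solution class the paper works in; and (b) the Gronwall constant $C(t)\sim\max_m\|\nabla\cdot\vec b_m\|_{L^\infty}$ must be locally integrable. For the regularized kernels (\ref{kandw11})--(\ref{kandw}) this is immediate, since $\Delta\mathcal{K}_a$ and $\Delta\mathcal{W}_a$ are bounded and $\|\rho\|_{L^1},\|\theta\|_{L^1}$ are controlled by mass conservation, so $\|\nabla\cdot\vec b_m\|_{L^\infty}\le C(\|\Delta\mathcal{K}_a\|_{L^\infty}+\|\Delta\mathcal{W}_a\|_{L^\infty})\bar m_0$ with no circularity; only for the genuinely singular Newtonian kernel does one need an a priori $L^\infty$ bound on $\rho$, and there the argument becomes conditional on the regularity furnished by Theorem \ref{thm3}. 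Your maximum-principle alternative with $\tilde c_m = c_m + \varepsilon e^{\lambda t}$ and $\lambda > \sup_t\|\nabla\cdot\vec b_m\|_{L^\infty}$ is also correct under the same hypotheses and is arguably closer to the ``standard'' proof the authors had in mind.
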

\begin{prop}\label{prop1}(Mass conservation)
	Let $c_m$, $m = 1,\cdots,M$, be non-negative solutions to (\ref{model1})-(\ref{model2}). Then the field model has the following conservation of mass
	\begin{align}\label{masscon}
	\int_{\mathbb{R}^d}c_m(\boldsymbol{x},t) \,\mathrm{d} \boldsymbol{x} \equiv\int_{\mathbb{R}^d}c^0_m(\boldsymbol{x}) \,\mathrm{d} \boldsymbol{x} = :\bar{m}^m_0, ~~\sum_{m=1}^M \bar{m}^m_0=:\bar{m}_0.
	\end{align}
\end{prop}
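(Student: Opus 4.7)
The plan is to integrate the evolution equation (\ref{model1}) over $\mathbb{R}^d$ and exploit the fact that the right-hand side is a pure divergence. Formally,
$$\frac{\mathrm{d}}{\mathrm{d} t}\int_{\mathbb{R}^d} c_m(\boldsymbol{x},t)\,\mathrm{d}\boldsymbol{x}=\int_{\mathbb{R}^d}\nabla\cdot\bigl(c_m\nabla\psi_m\bigr)\,\mathrm{d}\boldsymbol{x}=0,$$
so that $\int c_m(\cdot,t)=\int c_m^0=\bar m_0^m$ for every $t>0$, and summing over $m$ yields $\bar m_0$. The nontrivial point, since we work on all of $\mathbb{R}^d$, is justifying the vanishing of the divergence integral and the exchange of $\partial_t$ with the spatial integral.

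To make this rigorous I would introduce a smooth radial cutoff $\chi_R$ with $\chi_R\equiv 1$ on the ball $B_R$, $\chi_R\equiv 0$ outside $B_{2R}$, and $|\nabla\chi_R|\le C/R$. Multiplying (\ref{model1}) by $\chi_R$ and integrating by parts gives
$$\frac{\mathrm{d}}{\mathrm{d} t}\int_{\mathbb{R}^d}c_m\,\chi_R\,\mathrm{d}\boldsymbol{x}=-\int_{\mathbb{R}^d}c_m\,\nabla\psi_m\cdot\nabla\chi_R\,\mathrm{d}\boldsymbol{x}.$$
Since $c_m^0\in L^1$ and (by Proposition \ref{prop0}) $c_m\ge 0$, dominated convergence sends the left-hand side to $\tfrac{\mathrm{d}}{\mathrm{d} t}\int c_m$ as $R\to\infty$. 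The right-hand side is supported in the annulus $R\le|\boldsymbol{x}|\le 2R$, and the central estimate to establish is that
$$\int_{R\le|\boldsymbol{x}|\le 2R}|c_m\,\nabla\psi_m|\,\mathrm{d}\boldsymbol{x}\longrightarrow 0\quad\text{as }R\to\infty.$$

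The main obstacle, and the step that forces assumptions on the solution class, is controlling the flux $c_m\nabla\psi_m=c_m\nabla\log c_m+z_m c_m\nabla(\mathcal{K}*\rho)+c_m\nabla(\mathcal{W}*\theta)$ at infinity, because the convolution kernels are singular (Newtonian for $\mathcal{K}$, Lennard-Jones-type for $\mathcal{W}$). For the entropy piece I would use $c_m\nabla\log c_m=2\nabla\sqrt{c_m}\cdot\sqrt{c_m}$ and bound the annulus contribution via Cauchy-Schwarz once an $L^1$ bound on $\nabla c_m$ (or an $H^1$ bound on $\sqrt{c_m}$) is available from the energy identity. For the two convolution pieces, standard decay estimates on $\nabla\mathcal{K}*\rho$ (of order $|\boldsymbol{x}|^{-(d-1)}$ when $\rho$ has enough decay) and the analogous estimate for $\mathcal{W}$ give integrable flux whenever $c_m\in L^1\cap L^\infty$ with finite first moment, so the annulus integral vanishes in the limit.

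Combining the two limits yields $\tfrac{\mathrm{d}}{\mathrm{d} t}\int c_m=0$, and integration in $t$ from $0$ to any $t>0$ produces (\ref{masscon}); summing over $m=1,\dots,M$ gives the total mass identity. The argument is essentially the same one used throughout the paper to justify formal integrations by parts on $\mathbb{R}^d$, so the decay assumptions required here are exactly those embedded in the solution concept made precise in the well-posedness result of the next subsection.
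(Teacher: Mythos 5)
Your proposal is correct and is precisely the standard argument the paper has in mind: the paper explicitly omits this proof, stating that ``the proofs for these properties above are standard,'' and the intended content is exactly your formal computation --- integrate the divergence-form right-hand side of (\ref{model1}) over $\mathbb{R}^d$ and observe that the flux vanishes at infinity. Your additional cutoff argument is a reasonable way to make this rigorous, though at the same (formal) level of decay assumptions the paper itself uses for its other integrations by parts.
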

Here the notation $\bar{m}^m_0$ represents the mass of the $m$-th ionic species and $\bar{m}_0$ represents the total mass of all kinds of the ionic species for $m = 1, \cdots, M$.

The proofs for these properties above are standard, which we omit in this paper.
And the third property is to give the energy-dissipation relation for total free energy.
\begin{prop}\label{prop2}(Free energy-dissipation relation)
	Let $c_m$, $m=1,\cdots,M,$ be solutions to (\ref{model1})-(\ref{model2}). Then the following energy-dissipation relation holds that
	\begin{align}
	\frac{\mathrm{d}}{\mathrm{d}t} \mathcal{F}(c_m(\cdot, t))(t) + D = 0.\label{free0}
	\end{align}
	where the dissipation
	\begin{align}
	\label{disspation}
	D = \sum_{m=1}^M \int_{\mathbb{R}^d} c_m \big| \nabla \psi_m \big|^2 \,\mathrm{d} \boldsymbol{x}.
	\end{align}
\end{prop}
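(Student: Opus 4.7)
The plan is to differentiate $\mathcal{F}$ in time, identify the result as $\sum_m \int \psi_m \,\partial_t c_m\,\mathrm{d}\boldsymbol{x}$ via the variational derivative structure, and then substitute the PDE (\ref{model1}) and integrate by parts. More concretely, I would first differentiate the entropy term to obtain
\begin{equation*}
\frac{\mathrm{d}}{\mathrm{d}t}\sum_{m=1}^M \int_{\mathbb{R}^d} c_m\log c_m\,\mathrm{d}\boldsymbol{x}=\sum_{m=1}^M\int_{\mathbb{R}^d}(1+\log c_m)\,\partial_t c_m\,\mathrm{d}\boldsymbol{x}.
\end{equation*}
For the convolution terms, I would use the symmetry of the kernels, i.e. $\mathcal{K}(-\boldsymbol{x})=\mathcal{K}(\boldsymbol{x})$ and $\mathcal{W}(-\boldsymbol{x})=\mathcal{W}(\boldsymbol{x})$ (true for the Newtonian kernel (\ref{K}) and for the Lennard-Jones type kernel considered later), together with Fubini's theorem, to write
\begin{equation*}
\frac{\mathrm{d}}{\mathrm{d}t}\,\frac{1}{2}\int_{\mathbb{R}^d}\rho\,(\mathcal{K}*\rho)\,\mathrm{d}\boldsymbol{x}=\int_{\mathbb{R}^d}(\mathcal{K}*\rho)\,\partial_t\rho\,\mathrm{d}\boldsymbol{x},\qquad\frac{\mathrm{d}}{\mathrm{d}t}\,\frac{1}{2}\int_{\mathbb{R}^d}\theta\,(\mathcal{W}*\theta)\,\mathrm{d}\boldsymbol{x}=\int_{\mathbb{R}^d}(\mathcal{W}*\theta)\,\partial_t\theta\,\mathrm{d}\boldsymbol{x}.
\end{equation*}

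Next I would insert $\partial_t\rho=\sum_m z_m\partial_t c_m$ and $\partial_t\theta=\sum_m \partial_t c_m$ and regroup the three contributions per species. The bracketed factor in front of $\partial_t c_m$ becomes exactly $1+\log c_m + z_m(\mathcal{K}*\rho) + (\mathcal{W}*\theta)=\psi_m$, so
\begin{equation*}
\frac{\mathrm{d}}{\mathrm{d}t}\mathcal{F}(c_m(\cdot,t))=\sum_{m=1}^M\int_{\mathbb{R}^d}\psi_m\,\partial_t c_m\,\mathrm{d}\boldsymbol{x}.
\end{equation*}
Substituting the evolution equation (\ref{model1}) and integrating by parts (the boundary contributions at infinity vanish under the assumed decay of $c_m$ and $\nabla\psi_m$, which one justifies a posteriori from the well-posedness analysis of Section~2) gives
\begin{equation*}
\sum_{m=1}^M\int_{\mathbb{R}^d}\psi_m\,\nabla\cdot(c_m\nabla\psi_m)\,\mathrm{d}\boldsymbol{x}=-\sum_{m=1}^M\int_{\mathbb{R}^d}c_m|\nabla\psi_m|^2\,\mathrm{d}\boldsymbol{x}=-D,
\end{equation*}
which together with the previous identity yields (\ref{free0}).

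The routine computations are the chain rule and integration by parts; the only genuine issue is justifying the differentiation under the integral sign and the vanishing of the boundary terms. This requires enough regularity and decay of $c_m$, $\nabla c_m$, $\nabla(\mathcal{K}*\rho)$ and $\nabla(\mathcal{W}*\theta)$ at infinity, together with $c_m\ge 0$ (supplied by Proposition~\ref{prop0}) so that $c_m\log c_m$ is integrable; these will be the main technical hypotheses inherited from the well-posedness result for regularized kernels. The symmetry of $\mathcal{K}$ and $\mathcal{W}$ is essential for combining the two cross terms produced by differentiating $\tfrac{1}{2}\rho(\mathcal{K}*\rho)$ and $\tfrac{1}{2}\theta(\mathcal{W}*\theta)$ into a single factor, so this should be stated explicitly as a standing assumption.
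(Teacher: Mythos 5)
Your proposal is correct and follows essentially the same route as the paper's proof: the paper simply tests the evolution equation with $\psi_m=\frac{\delta\mathcal{F}}{\delta c_m}$ and integrates by parts to get $\frac{\mathrm{d}}{\mathrm{d}t}\mathcal{F}=\sum_m\int \psi_m\,\partial_t c_m=-\sum_m\int c_m|\nabla\psi_m|^2$. Your version merely spells out the kernel-symmetry step behind the identity $\frac{\mathrm{d}}{\mathrm{d}t}\mathcal{F}=\sum_m\int\psi_m\,\partial_t c_m$ and flags the decay hypotheses, which the paper leaves implicit.
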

\begin{proof}
	In fact, we only need to take $\frac{\delta \mathcal{F}(c_m(\cdot, t))}{\delta c_m}$ as a test function in the both side of (\ref{nonequ}). Consequently, using integration by parts, we have
	\begin{equation}
	\frac{\mathrm{d}}{\mathrm{d}t}\mathcal{F}(c_m(\cdot, t))(t) = \sum_{m = 1}^{M} \int_{\mathbb{R}^d} \dfrac{\partial c_m}{\partial t} \psi_m \,\mathrm{d} \boldsymbol{x} = - \sum_{m = 1}^{M} \int_{\mathbb{R}^d} c_m
	\big| \nabla \psi_m \big|^2 \,\mathrm{d} \boldsymbol{x} \leqslant 0.
	\end{equation}
\end{proof}
Next four equivalent statements for the steady solutions are shown.
\begin{prop}\label{prop3}(Four equivalent statements for the positive steady state)
	Assuming that $\bar C_m \in L^1\cap L\log L$ is bounded with $\int_{\mathbb{R}^d} \bar C_m \,\mathrm{d} \boldsymbol{x} = M$, $\bar C_m \in C(\mathbb{R}^d)$, $\bar C_m > 0$ in $\mathbb{R}^d$ and $\bar C_m$ decays at infinity for all $m$. Then the following four statements are equivalent:
	\begin{itemize}
		\item Equilibrium (definition of weak steady solutions): $\bar\psi_m \in \dot{H}^1(\mathbb{R}^d)$ and $\nabla \cdot (\bar C_m \nabla \bar \psi_m)$ $= 0$ in $H^{-1}(\mathbb{R}^d)$, $\forall ~m = 1, \cdots, M$, where $\bar\psi_m = 1 + \log \bar C_m + z_m \mathcal{K}*\bar \rho + \mathcal{W}* \bar\theta$, $~~ \bar \rho = \sum_{m = 1}^M z_m \bar C_m $, $~~ \bar \theta = \sum_{m = 1}^M \bar C_m$.
		\item No dissipation: $\sum_{m = 1}^{M}\int_{\mathbb{R}^d} \bar C_m|\nabla \bar\psi_m|^2 \,\mathrm{d} \boldsymbol{x} = 0$.
		\item $\left( \bar C_1, \cdots, \bar C_m \right)$ is a critical point of $\mathcal{F}(c_m(\cdot,t))$.
		\item $\bar\psi_m$ is a constant, $\forall~ m = 1, \cdots, M$.
	\end{itemize}
	
\end{prop}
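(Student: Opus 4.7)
My plan is to establish the four equivalences via the cycle (4) $\Rightarrow$ (1) $\Rightarrow$ (2) $\Rightarrow$ (4), and then close the loop with (3) $\Leftrightarrow$ (4), which identifies the constrained critical points of $\mathcal{F}$ with the states in which the chemical potentials are spatially constant. Each implication is short once the right test function is chosen; the bulk of the technical work lies in justifying an integration by parts on $\mathbb{R}^d$ with only $\dot{H}^1$ regularity of $\bar\psi_m$.

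The easy directions come first. For (4) $\Rightarrow$ (1), if $\bar\psi_m$ is a constant, then $\nabla\bar\psi_m\equiv 0$, so $\bar C_m\nabla\bar\psi_m\equiv 0$ and the divergence vanishes trivially in $H^{-1}(\mathbb{R}^d)$. For (2) $\Rightarrow$ (4), from the hypothesis $\bar C_m\in C(\mathbb{R}^d)$ with $\bar C_m>0$ pointwise, the integrand $\bar C_m|\nabla\bar\psi_m|^2$ is nonnegative, so its vanishing forces $\nabla\bar\psi_m=0$ a.e.\ in $\mathbb{R}^d$. Since $\mathbb{R}^d$ is connected and $\bar\psi_m$ admits a representative in $\dot H^1(\mathbb{R}^d)$ with vanishing weak gradient, $\bar\psi_m$ is a.e.\ equal to a constant, and one can argue this constant is well-defined (since $\bar C_m$, $\log\bar C_m$, and the convolution terms are continuous on $\mathbb{R}^d$).

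The main step is (1) $\Rightarrow$ (2). Formally we test $\nabla\cdot(\bar C_m\nabla\bar\psi_m)=0$ against $\bar\psi_m$ and integrate by parts, summing over $m$. The difficulty is that $\bar\psi_m$ only lies in $\dot H^1$, not in $H^1$, so it is not directly an admissible test function against an $H^{-1}$ distribution. The plan is to introduce a smooth cutoff $\chi_R(\boldsymbol{x})=\chi(|\boldsymbol{x}|/R)$ with $\chi\in C_c^\infty([0,\infty))$ equal to $1$ near $0$, apply the weak equation to $\chi_R\,\bar\psi_m\in H^1(\mathbb{R}^d)$, expand
\begin{equation*}
\int_{\mathbb{R}^d}\bar C_m\nabla\bar\psi_m\cdot\nabla(\chi_R\bar\psi_m)\,\mathrm{d}\boldsymbol{x}=\int_{\mathbb{R}^d}\chi_R\,\bar C_m|\nabla\bar\psi_m|^2\,\mathrm{d}\boldsymbol{x}+\int_{\mathbb{R}^d}\bar C_m\bar\psi_m\nabla\bar\psi_m\cdot\nabla\chi_R\,\mathrm{d}\boldsymbol{x},
\end{equation*}
and then let $R\to\infty$. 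The first term tends to the total dissipation by monotone convergence. The second (boundary) term is shown to vanish using $|\nabla\chi_R|=O(1/R)$, the $L^2$ bound on $\bar C_m^{1/2}\nabla\bar\psi_m$ inherited from the dissipation, and the decay of $\bar C_m$ at infinity together with the boundedness of $\bar C_m$ and the control of $\bar\psi_m$ near infinity via the explicit expression $\bar\psi_m=1+\log\bar C_m+z_m\mathcal{K}\ast\bar\rho+\mathcal{W}\ast\bar\theta$. This is the step I expect to be the main obstacle; the estimate may require trading some $L^p$ integrability on $\bar\psi_m$ inside the annulus $\{R\le|\boldsymbol{x}|\le 2R\}$ for decay of $\bar C_m$, and is where the hypotheses $\bar C_m\in L^1\cap L\log L$, bounded, and decaying at infinity are used.

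Finally I close the loop by proving (3) $\Leftrightarrow$ (4). Because $\mathcal{F}$ is minimized under the prescribed mass constraints $\int \bar C_m=\bar m_0^m$, a critical point must satisfy the Lagrange equations $\delta\mathcal{F}/\delta c_m=\lambda_m$ for constants $\lambda_m$. Computing the first variation exactly as in (\ref{potential1}) gives $\bar\psi_m=\lambda_m$, i.e., (4). Conversely, if $\bar\psi_m$ is constant for every $m$, then for any admissible (mass-preserving) smooth perturbation $\eta_m$ with $\int\eta_m=0$, the directional derivative $\sum_m\int\bar\psi_m\,\eta_m=\sum_m\lambda_m\int\eta_m=0$, so $(\bar C_1,\dots,\bar C_M)$ is a critical point. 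This yields the four-way equivalence.
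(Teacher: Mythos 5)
Your proposal follows essentially the same route as the paper: the implication cycle is identical (the paper proves (i)$\Rightarrow$(ii), (ii)$\Rightarrow$(iv), (iv)$\Rightarrow$(i) and (iii)$\Leftrightarrow$(iv); you prove (4)$\Rightarrow$(1)$\Rightarrow$(2)$\Rightarrow$(4) and (3)$\Leftrightarrow$(4)), and your Lagrange-multiplier treatment of (3)$\Leftrightarrow$(4) is the same first-variation computation against zero-mean test functions that the paper performs. The one place you diverge is (1)$\Rightarrow$(2): the paper simply asserts the integration by parts $0=\int\bar\psi_m\,\nabla\cdot(\bar C_m\nabla\bar\psi_m)=-\int\bar C_m|\nabla\bar\psi_m|^2$, justified by density of $C_0^\infty$ in $\dot H^1$ together with boundedness of $\bar C_m$, whereas you introduce a cutoff $\chi_R$ and isolate the boundary term $\int\bar C_m\bar\psi_m\nabla\bar\psi_m\cdot\nabla\chi_R$, which you candidly leave unestimated. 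That term is the genuine technical content of this implication (since $\bar\psi_m\in\dot H^1$ need not be in $L^2$, pairing it against an $H^{-1}$ distribution is not automatic), and your proof is incomplete until it is closed; but the paper's own one-line justification does not close it either, so your attempt is, if anything, more honest about where the difficulty sits. Everything else in your argument matches the paper's.
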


\begin{proof}
	At first, we prove (i)$\Rightarrow$(ii). Since $\bar{\psi}_{m} \in H^{1}(\mathbb{R}^{d}), \nabla \cdot\left( \bar C_{m} \nabla \bar{\psi}_{m} \right)=0$ in $H^{-1}\left(\mathbb{R}^{d}\right)$, $ C_{0}^{\infty}(\mathbb{R}^{d})$ is dense in $\dot{H}^1(\mathbb{R}^{d})$ and $\bar C_m$ is bounded, one has
	\begin{equation}
	0 = \int_{\mathbb{R}^{d}} \bar{\psi}_{m} \nabla \cdot\left( \bar{C}_{m} \nabla \bar{\psi}_{m} \right) \,\mathrm{d} \boldsymbol{x} = -\int_{\mathbb{R}^{d}} \bar{C}_{m}\left| \nabla \bar{\psi}_{m}\right|^{2} \,\mathrm{d} \boldsymbol{x}, \quad \forall ~m = 1, \cdots, M.
	\end{equation}
	Hence (ii) holds.
	
	Next we prove (iii)$\Leftrightarrow$(iv). Notice that $\bar C_m $ is a critical point of $\mathcal{F}(c_m(\cdot,t))$ if and only if
	\begin{equation}
	\left.\frac{\mathrm{d}}{\mathrm{d} \varepsilon}\right|_{\varepsilon=0} \mathcal{F}\left(\bar C_{m} + \varepsilon \phi\right)=0, \quad \forall \phi \in C_{0}^{\infty}(\mathbb{R}^{d}) \text {  with } \int_{\mathbb{R}^{d}} \phi(x) \,\mathrm{d} \boldsymbol{x} = 0.
	\end{equation}
	Equivalently,
	\begin{equation}
	\int_{\mathbb{R}^{d}} \bar{\psi}_{m} \phi \,\mathrm{d} \boldsymbol{x} = 0, \quad \forall \phi \in C_{0}^{\infty}(\mathbb{R}^{d}),
	\end{equation}
	which implies $\bar\psi_m $ is a constant, $\forall~ m = 1, \cdots, M$.
	
	Then we prove (ii)$\Rightarrow$(iv). Suppose $\sum_{m = 1}^{M} \int_{\mathbb{R}^d}\bar C_m|\nabla \bar\psi_m|^2 \,\mathrm{d} \boldsymbol{x} = 0$. It follows from $\bar C_m > 0$ at any point
	$\boldsymbol{x}_{0} \in \mathbb{R}^{d}$ that $\nabla \bar{\psi}_{m}=0$ in $\mathbb{R}^{d}$ and thus $\bar {\psi}_m$ is a constant for all $m = 1, \cdots, M$.
	
	Hence we complete the proof for (ii)$\Rightarrow$(iii) and (iii)$\Rightarrow$(iv).
	
	Finally we prove (iv) $\Rightarrow$ (i). Since $\bar{\psi}_m$ is a constant in $\mathbb{R}^{d}$, (i) is a direct consequence of (iv).
\end{proof}

\subsection{Well-posedness with the Regularized Kernels}

In this subsection, the well-posedness of the field model (\ref{model1})-(\ref{model2}) is presented provided that we describe the inter-ion repulsive force by the regularized Lennard-Jones type potential and the electrostatic force by the regularized Newtonian potential. Specifically, with constant parameters $a > 0$ and $\eta > 0$, we set the kernel $\mathcal{K}(\boldsymbol{x})$ and $\mathcal{W}(\boldsymbol{x})$ in the following form
\begin{equation}
\label{kandw11}
\mathcal{K}(\boldsymbol{x}) = \mathcal{K}_a(\boldsymbol{x}) :=
\left\{
\begin{array}{ll}
-{\dfrac{1}{2}}\log\left(|\boldsymbol{x}|^2+a^2\right), & d = 2, \\
\dfrac{1}{\left(|\boldsymbol{x}|^2 + a^2 \right)^{\frac{d - 2}{2}}},
&d>2,
\end{array}
\right.
\end{equation}
and
\begin{equation}
\label{kandw}
\mathcal{W}(\boldsymbol{x}) = \mathcal{W}_a(\boldsymbol{x}) :=
\dfrac{\eta}{\left(|\boldsymbol{x}|^2 + a^2 \right)^{\frac{k}{2}}},  d>2, ~~d-2 \leqslant k < d.
\end{equation}
By classical parabolic theory, we know that there is a global smooth solution for the field model (\ref{model1})-(\ref{model2}) with the kernels $\mathcal{K}(\boldsymbol{x})$ and $\mathcal{W}(\boldsymbol{x})$ defined by (\ref{kandw11}) or (\ref{kandw}),  which is given by the following theorem without the proof.
\begin{thm}(Existence for the multi-ionic species case)\label{thm3}
	Assume that $c_m^0 \in L^1_+ \cap L\log L(\mathbb{R}^d)$, $\sigma_2(0) < \infty$ and $\mathcal{F}(0)<\infty$. Then for any $T>0$, there is a global smooth solution $(c_1, \cdots, c_M)$ to the field model (\ref{model1})-(\ref{model2}).
\end{thm}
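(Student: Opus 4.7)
The plan is to exploit the smoothness of the regularized kernels: since $|\boldsymbol{x}|^2+a^2\geq a^2>0$ uniformly, both $\mathcal{K}_a$ and $\mathcal{W}_a$ are $C^\infty$ functions whose derivatives of every order are bounded on $\mathbb{R}^d$ by constants depending only on $a$, $d$, $k$, $\eta$, and decay at infinity starting from order one. Rewriting (\ref{model1}) via the identity $\nabla\!\cdot(c_m\nabla\log c_m)=\Delta c_m$ puts the system into the nondegenerate parabolic form
\begin{equation*}
\partial_t c_m = \Delta c_m + \nabla\!\cdot\bigl(c_m\, v_m\bigr),\qquad v_m := z_m\nabla(\mathcal{K}_a*\rho) + \nabla(\mathcal{W}_a*\theta).
\end{equation*}
Combined with Proposition \ref{prop1}, Young's inequality then supplies the uniform a priori bounds
\begin{equation*}
\|v_m\|_{W^{j,\infty}(\mathbb{R}^d)} \leq C(a,j,\bar m_0)\quad\text{for every }j\geq 0,
\end{equation*}
whenever the $L^1$ norms of the $c_m$ remain under control.

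I would carry out the proof in three steps. First, local existence: given $\tilde c_m \in C([0,T];L^1_+\cap L^\infty)$, substitute into the drift to obtain a smooth $\tilde v_m$, solve the linear parabolic problem $\partial_t c_m = \Delta c_m + \nabla\!\cdot(c_m\tilde v_m)$ via the heat semigroup and Duhamel, and verify that the induced map is a contraction on a short time interval; the difference estimate hinges on inequalities of the type $\|\nabla\mathcal{K}_a*(\rho_1-\rho_2)\|_\infty \leq \|\nabla\mathcal{K}_a\|_\infty\|\rho_1-\rho_2\|_1$ with an analogous bound for $\mathcal{W}_a$. Second, extension to arbitrary $T>0$: Propositions \ref{prop0} and \ref{prop1} automatically give positivity and conservation of $\|c_m\|_1$, while Proposition \ref{prop2} controls the free energy; the uniform drift estimate then feeds into a parabolic maximum principle yielding $\|c_m(\cdot,t)\|_\infty \leq \|c_m^0\|_\infty\exp\bigl(t\,\|\nabla\!\cdot v_m\|_{L^\infty_{t,x}}\bigr)$, ruling out finite-time blow-up, and propagation of $\sigma_2(t)$ follows from a Grönwall argument against the bounded drift. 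Third, smoothness: once $c_m$ is a bounded weak solution with a smooth drift, iterate classical interior parabolic Schauder estimates to upgrade to $c_m\in C^\infty(\mathbb{R}^d\times(0,T])$.

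The main obstacle is rather mild thanks to the regularization: every nonlocal interaction reduces, through Young-type inequalities, to estimates in terms of $\|c_m\|_1$, a conserved quantity. The only quantitative care needed is that the constants deteriorate as $a\to 0$, which is exactly why the statement is restricted to fixed $a>0$; the unregularized case, in which the Newtonian and Lennard-Jones singularities at the origin are felt, would require substantially more delicate harmonic analysis and lies outside the scope of this theorem.
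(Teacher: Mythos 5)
The paper does not actually prove this theorem: it states the result ``without the proof,'' appealing only to ``classical parabolic theory.'' Your sketch supplies exactly the standard argument that phrase points to --- nondegenerate parabolic form with a drift controlled in every $W^{j,\infty}$ norm by $\|\nabla\mathcal{K}_a\|_{W^{j,\infty}}$, $\|\nabla\mathcal{W}_a\|_{W^{j,\infty}}$ and the conserved $L^1$ masses, local existence by contraction, global extension via the maximum principle, and Schauder bootstrap --- and it is correct in outline, including your observation that for $d=2$ only the derivatives of $\mathcal{K}_a$ (not $\mathcal{K}_a$ itself) are bounded, which is all the drift requires. The one point to tidy up is that the hypotheses give $c_m^0\in L^1_+\cap L\log L$ rather than $L^\infty$, so you should either run the fixed point entirely in $C([0,T];L^1)$ (which your difference estimate $\|\nabla\mathcal{K}_a*(\rho_1-\rho_2)\|_\infty\leq\|\nabla\mathcal{K}_a\|_\infty\|\rho_1-\rho_2\|_1$ already permits) or invoke instantaneous heat-semigroup smoothing before applying the $L^\infty$ maximum-principle bound on $(t_0,T]$.
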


The next property for the regularized field model (\ref{model1})-(\ref{model2}) is concerned with the boundedness of the second moment which is essential for showing the tightness of $c_m$, $m = 1,\cdots,M$. Here
$$
\sigma_2(t)=\sum_{m = 1}^ M \sigma_2^m(t) = \sum_{m = 1}^M \int_{\mathbb{R}^d}|\boldsymbol{x}|^2 c_m \,\mathrm{d} \boldsymbol{x}.
$$
\begin{prop}(Boundness of the second moment)\label{boundednessm2}
	Let $c_m$, $m = 1,\cdots,M$, be non-negative solutions to (\ref{model1})-(\ref{model2}).
	If $\sigma_2(0)<\infty$, then we have
	\begin{equation}
	\sigma_2(t) \leqslant Ct, ~~ \mathrm{for} ~~d \geqslant 2, \label{mom2}
	\end{equation}
	where $C$ is a constant that only depends on $d, k, \eta, a, z_m, m = 1, \cdots, M,$ and the initial data.
\end{prop}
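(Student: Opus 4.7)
The plan is to differentiate $\sigma_2(t)$ along the flow, split the expression into a diffusion part and two interaction parts, and then close a uniform-in-time bound on the right-hand side by symmetrizing the interaction integrals.

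Multiplying (\ref{model1}) by $|\boldsymbol{x}|^2$, integrating by parts (the decay of $c_m$ that legitimizes the boundary terms comes from the smoothness guaranteed by Theorem~\ref{thm3}), expanding $\nabla\psi_m$, and summing over $m$ yields
\begin{equation*}
\frac{d}{dt}\sigma_2(t) = 2d\,\bar m_0 - 2\int_{\mathbb{R}^d}\boldsymbol{x}\cdot\rho\,\nabla(\mathcal{K}_a*\rho)\,d\boldsymbol{x} - 2\int_{\mathbb{R}^d}\boldsymbol{x}\cdot\theta\,\nabla(\mathcal{W}_a*\theta)\,d\boldsymbol{x},
\end{equation*}
where the diffusion contribution $2d\,\bar m_0 = -2\sum_m\int\boldsymbol{x}\cdot\nabla c_m\,d\boldsymbol{x}$ is identified via a second integration by parts together with Proposition~\ref{prop1}.

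The central step is to symmetrize the two interaction integrals by means of $\nabla\mathcal{K}_a(-\boldsymbol{z})=-\nabla\mathcal{K}_a(\boldsymbol{z})$, which gives
\begin{equation*}
\int\boldsymbol{x}\cdot\rho(\boldsymbol{x})\,\nabla(\mathcal{K}_a*\rho)(\boldsymbol{x})\,d\boldsymbol{x} = \tfrac12\int\!\!\int(\boldsymbol{x}-\boldsymbol{y})\cdot\nabla\mathcal{K}_a(\boldsymbol{x}-\boldsymbol{y})\,\rho(\boldsymbol{x})\rho(\boldsymbol{y})\,d\boldsymbol{x}\,d\boldsymbol{y},
\end{equation*}
and likewise with $\theta,\mathcal{W}_a$ in place of $\rho,\mathcal{K}_a$. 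A direct computation on the regularized kernels from (\ref{kandw11})--(\ref{kandw}) produces
\begin{equation*}
\boldsymbol{z}\cdot\nabla\mathcal{K}_a(\boldsymbol{z}) = -\frac{(d-2)|\boldsymbol{z}|^2}{(|\boldsymbol{z}|^2+a^2)^{d/2}},\qquad \boldsymbol{z}\cdot\nabla\mathcal{W}_a(\boldsymbol{z}) = -\frac{\eta k|\boldsymbol{z}|^2}{(|\boldsymbol{z}|^2+a^2)^{(k+2)/2}}
\end{equation*}
(with the logarithmic variant $-|\boldsymbol{z}|^2/(|\boldsymbol{z}|^2+a^2)$ when $d=2$). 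Each of these scalar maps vanishes at the origin, decays at infinity because $d\geq 2$ and $k\geq d-2\geq 0$, and is continuous between, so each is uniformly bounded on $\mathbb{R}^d$ by an explicit constant depending only on $d,k,\eta,a$.

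Combining these uniform bounds with the mass bounds $\|\rho\|_1\leq\sum_m|z_m|\bar m_0^m$ and $\|\theta\|_1=\bar m_0$ (Proposition~\ref{prop1}) collapses each interaction double integral into a constant, leading to
\begin{equation*}
\frac{d}{dt}\sigma_2(t) \leq 2d\,\bar m_0 + C_1\|\rho\|_1^2 + C_2\|\theta\|_1^2 \leq C,
\end{equation*}
with $C$ depending only on $d,k,\eta,a,\{z_m\},\{\bar m_0^m\}$; integrating over $[0,t]$ and absorbing the finite quantity $\sigma_2(0)$ into the constant then yields (\ref{mom2}). The main obstacle is conceptual: the naive bound $|\nabla\mathcal{K}_a*\rho|\leq\|\nabla\mathcal{K}_a\|_\infty\|\rho\|_1$ combined with the Cauchy-Schwarz inequality applied to the free factor $\boldsymbol{x}$ only gives $\sigma_2'\lesssim 1+\sqrt{\sigma_2}$ and hence a quadratic-in-$t$ bound, so the announced linear-in-$t$ rate genuinely requires the symmetrization step, which replaces the isolated $\boldsymbol{x}$ outside the convolution by the difference $\boldsymbol{x}-\boldsymbol{y}$ that pairs with $\nabla\mathcal{K}_a(\boldsymbol{x}-\boldsymbol{y})$ to form a globally bounded kernel.
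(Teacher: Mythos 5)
Your proof is correct and follows essentially the same route as the paper's: differentiate $\sigma_2$, symmetrize the interaction integrals using the antisymmetry of $\nabla\mathcal{K}_a$ and $\nabla\mathcal{W}_a$ so that the free factor $\boldsymbol{x}$ becomes $\boldsymbol{x}-\boldsymbol{y}$, bound the resulting kernels $(\boldsymbol{x}-\boldsymbol{y})\cdot\nabla\mathcal{K}_a(\boldsymbol{x}-\boldsymbol{y})$ uniformly by constants depending on $a,d,k,\eta$, and integrate in time. Your closing remark correctly identifies why the symmetrization is the essential step, and the only (shared) cosmetic issue is that the integrated bound is really $\sigma_2(0)+Ct$ rather than $Ct$.
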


\begin{proof}
	In fact, taking $|\boldsymbol{x}|^2$ as a test function in the equations of $c_m$, integrating them in $\mathbb R^d$, we have
	\begin{align}
	&\frac{\mathrm{d}}{\mathrm{d}t} \int_{\mathbb{R}^d} |\boldsymbol{x}|^2 c_m \,\mathrm{d} \boldsymbol{x} = 2 d \bar{m}_0 ^m - 2 \int_{\mathbb{R}^d} \boldsymbol{x} \cdot (\nabla\mathcal{K}_a*\rho )(\boldsymbol{x}) z_m c_m \, \mathrm{d} \boldsymbol{x} - 2 \int_{\mathbb{R}^d} \boldsymbol{x} \cdot (\nabla\mathcal{W}_a*\theta )(\boldsymbol{x}) c_m \,\mathrm{d} \boldsymbol{x},\label{c+sec}
	\end{align}
	where $\mathcal{K}_a(\boldsymbol{x})$ and $\mathcal{W}_a(\boldsymbol{x})$ are defined in (\ref{kandw11}) or (\ref{kandw}).
	Summing (\ref{c+sec}) for $m$ from 1 to $M$, we obtain
	\begin{align}
	\frac{\mathrm{d}}{\mathrm{d}t}  \sigma_2(t) = 2 d \bar{m}_0 - 2 \int_{\mathbb{R}^d} \boldsymbol{x} \cdot (\nabla\mathcal{K}_a*\rho)(\boldsymbol{x}) \rho(\boldsymbol{x}) \,\mathrm{d} \boldsymbol{x} - 2 \int_{\mathbb{R}^d} \boldsymbol{x} \cdot (\nabla\mathcal{W}_a*\theta )(\boldsymbol{x}) \theta(\boldsymbol{x}) \,\mathrm{d} \boldsymbol{x}.
	\end{align}
	
	(1) For $d \geqslant 3$, notice that
	\begin{align}
	\nabla \mathcal{K}_a(\boldsymbol{x}) = - (d-2) \frac{\boldsymbol{x}}{(|\boldsymbol{x}|^2 + a^2)^{\frac{d}{2}}},\quad \nabla \mathcal{W}_a(\boldsymbol{x}) = -k \eta \frac{\boldsymbol{x}}{(|\boldsymbol{x}|^2+a^2)^{\frac{k}{2}+1}}.
	\end{align}
	By the symmetry of the potentials, it follows
	\begin{align*}
	-2\int_{\mathbb{R}^d} \boldsymbol{x} \cdot (\nabla\mathcal{K}_a*\rho)(\boldsymbol{x}) \rho(\boldsymbol{x}) \,\mathrm{d} \boldsymbol{x} &= (d-2) \int_{\mathbb{R}^d} \int_{\mathbb{R}^d} \frac{|\boldsymbol{x} - \boldsymbol{y}|^2 \rho(\boldsymbol{x}) \rho(\boldsymbol{y})}{(|\boldsymbol{x} - \boldsymbol{y}|^2+a^2)^\frac{d}{2}} \,\mathrm{d} \boldsymbol{y} \,\mathrm{d} \boldsymbol{x}, \\
	-2\int_{\mathbb{R}^d} \boldsymbol{x} \cdot (\nabla\mathcal{W}_a*\theta)(\boldsymbol{x}) \theta(\boldsymbol{x}) \,\mathrm{d} \boldsymbol{x} &= k \eta \int_{\mathbb{R}^d}\int_{\mathbb{R}^d} \frac{|\boldsymbol{x} - \boldsymbol{y}|^2 \theta(\boldsymbol{x}) \theta(\boldsymbol{y})}{(|\boldsymbol{x} - \boldsymbol{y}|^2+a^2)^{\frac{k}{2}+1}} \,\mathrm{d} \boldsymbol{y} \,\mathrm{d} \boldsymbol{x}.
	\end{align*}
	Thus,
	\begin{align}\label{11}
	\frac{\mathrm{d}}{\mathrm{d}t} \sigma_2(t)\leqslant 2d \bar{m}_0 + \left(k \eta a^{-k}+(d-2)a^{2-d} \max\{|z_1|,..., |z_M|\}^2 \right) \left( \bar{m}_0 \right)^2 \leqslant C.
	\end{align}
	
	(2) For $d = 2$, noticing that
	\begin{align*}
	-2 \int_{\mathbb{R}^2} \boldsymbol{x} \cdot (\nabla \mathcal{K}_a*\rho)(\boldsymbol{x}) \rho(\boldsymbol{x}) \,\mathrm{d} \boldsymbol{x} &= \int_{\mathbb{R}^2}\int_{\mathbb{R}^2} \frac{|\boldsymbol{x} - \boldsymbol{y}|^2 \rho(\boldsymbol{x}) \rho(\boldsymbol{y})}{(|\boldsymbol{x} - \boldsymbol{y}|^2 + a^2)} \,\mathrm{d} \boldsymbol{y} \,\mathrm{d} \boldsymbol{x} \\
	-2 \int_{\mathbb{R}^d} \boldsymbol{x} \cdot (\nabla \mathcal{W}_a*\theta)(\boldsymbol{x}) \theta(\boldsymbol{x}) \,\mathrm{d} \boldsymbol{x} &= k \eta \int_{\mathbb{R}^d} \int_{\mathbb{R}^d} \frac{|\boldsymbol{x} - \boldsymbol{y}|^2 \theta(\boldsymbol{x})\theta(\boldsymbol{y})}{(|\boldsymbol{x} - \boldsymbol{y}|^2+a^2)^{\frac{k}{2}+1}} \,\mathrm{d} \boldsymbol{y} \,\mathrm{d} \boldsymbol{x},
	\end{align*}
	we have
	\begin{align}\label{12}
	\frac{\mathrm{d}}{\mathrm{d}t} \sigma_2(t) \leqslant 4 \bar{m}_0 + (k \eta + 1) \max \{|z_1|,..., |z_M|\}^2 \left(\bar{m}_0\right)^2.
	\end{align}
	Hence (\ref{11}) and (\ref{12}) imply that (\ref{mom2}) holds.
\end{proof}

Using Proposition \ref{boundednessm2} and the free energy-dissipation relation (\ref{free0}), we can also provide the estimate on the maximal density function. A maximal density function defined by
DiPerna and Majda \cite{DM} associated to a measure $u(\boldsymbol{x})$ is given by
$$
M_{r} (u) = \sup_{\boldsymbol{x}, t} \int_{B(\boldsymbol{x},r)} u(\boldsymbol{y}) \,\mathrm{d}\boldsymbol{y}.
$$
Then the estimate on the maximal density functions $M_{r}(c_m)$, $m = 1,\cdots,M$, is in the following lemma.
\begin{lem}\label{lemma1}(Estimate on the maximal density function)
	Assume that $c_m^0 \in L^1_+ \cap L\log L(\mathbb{R}^d)$, $m=1, \cdots, M$, $\mathcal{F}(0) < \infty$ and $\sigma_2(0) < \infty$, then we have
	\begin{align}
	\sum_{m = 1}^{M} M_{r}(c_m(\cdot, t))\leqslant C 
	((2r)^2 + a^2)^{\frac{k}{4}},
	\end{align}
	where $C$ is a constant dependent on the initial data.
\end{lem}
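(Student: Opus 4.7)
My plan is to reduce $\sum_m M_r(c_m(\cdot,t))$ to a bound on $M_r(\theta(\cdot,t))$ and then control the latter through a pointwise lower bound on $\mathcal{W}_a$ combined with the energy dissipation. Since $c_m\leq\theta$ one has $M_r(c_m)\leq M_r(\theta)$, so $\sum_m M_r(c_m)\leq M\,M_r(\theta)$. The key observation is that for $\boldsymbol{x},\boldsymbol{y}\in B(\boldsymbol{x}_0,r)$ we have $|\boldsymbol{x}-\boldsymbol{y}|\leq 2r$ and hence
$$\mathcal{W}_a(\boldsymbol{x}-\boldsymbol{y})=\frac{\eta}{(|\boldsymbol{x}-\boldsymbol{y}|^2+a^2)^{k/2}} \geq \frac{\eta}{((2r)^2+a^2)^{k/2}}.$$
This trades a uniform local mass bound for the total steric interaction energy.

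Restricting the convolution integral to $B(\boldsymbol{x}_0,r)\times B(\boldsymbol{x}_0,r)$ and applying this lower bound yields, for every $\boldsymbol{x}_0\in\mathbb{R}^d$,
$$\Bigl(\int_{B(\boldsymbol{x}_0,r)}\theta\,\mathrm{d}\boldsymbol{x}\Bigr)^{2} \leq \frac{((2r)^2+a^2)^{k/2}}{\eta}\int_{\mathbb{R}^d}\theta(\mathcal{W}_a*\theta)\,\mathrm{d}\boldsymbol{x}.$$
Taking the supremum over $\boldsymbol{x}_0$ gives $M_r(\theta(\cdot,t))^{2}\leq \eta^{-1}((2r)^2+a^2)^{k/2}\int\theta(\mathcal{W}_a*\theta)\,\mathrm{d}\boldsymbol{x}$, so it remains to bound the total steric energy by a constant.

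For that, I would use Proposition \ref{prop2} ($\mathcal{F}(t)\leq \mathcal{F}(0)$) together with the splitting $\mathcal{F}=\sum_m\int c_m\log c_m + \tfrac12\int\rho(\mathcal{K}_a*\rho) + \tfrac12\int\theta(\mathcal{W}_a*\theta)$ to write
$$\tfrac12\int\theta(\mathcal{W}_a*\theta)\,\mathrm{d}\boldsymbol{x} \leq \mathcal{F}(0) - \sum_m\int c_m\log c_m\,\mathrm{d}\boldsymbol{x} - \tfrac12\int\rho(\mathcal{K}_a*\rho)\,\mathrm{d}\boldsymbol{x}.$$
The last integral is non-negative: since $u\mapsto (u+a^2)^{-(d-2)/2}$ is completely monotone, Bernstein's theorem implies that $\mathcal{K}_a$ is a positive-definite kernel, so $\int\rho(\mathcal{K}_a*\rho)\,\mathrm{d}\boldsymbol{x}\geq 0$ for any signed $\rho$. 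For the entropy term, I would invoke the Gibbs-type inequality $c\log c \geq c\log g_* + c - g_*$ with the Gaussian $g_*(\boldsymbol{x})=(2\pi)^{-d/2}e^{-|\boldsymbol{x}|^2/2}$, which gives $-\int c_m\log c_m\,\mathrm{d}\boldsymbol{x}\leq \tfrac12\sigma_2^m(t)+C_1(\bar m_0^m)$. Summing over $m$ and invoking Proposition \ref{boundednessm2} bounds the right-hand side by $C(1+t)$, and chaining the two inequalities then produces $M_r(\theta(\cdot,t))^{2}\leq C((2r)^2+a^2)^{k/2}$; taking square roots and using $\sum_m M_r(c_m)\leq M\,M_r(\theta)$ finishes the proof.

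The main obstacle is the sign control of the electrostatic term. In the unregularized Newtonian case one has the manifestly non-negative identity $\tfrac12\int\rho\Phi_{\mathcal{K}}=\tfrac12\int|\nabla\Phi_{\mathcal{K}}|^2$, but for the mollified kernel $\mathcal{K}_a$ there is no local elliptic PDE connecting $\rho$ and a potential, so positivity must be argued via a Fourier or completely-monotone (Bernstein) route. A secondary point is that the constant $C$ in the conclusion inherits the linear-in-$t$ growth from the second-moment bound $\sigma_2(t)\leq Ct$, so the estimate is uniform in the geometric parameters $r$ and $a$ but only locally uniform in time.
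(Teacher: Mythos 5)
Your proof is correct and follows essentially the same route as the paper's: bound the squared local mass by the steric interaction integral using the lower bound $\mathcal{W}_a(\boldsymbol{x}-\boldsymbol{y})\geq \eta((2r)^2+a^2)^{-k/2}$ on $B(\boldsymbol{x}_0,r)\times B(\boldsymbol{x}_0,r)$, then control that integral via the free-energy dissipation and the second-moment bound. You additionally make explicit the steps the paper leaves implicit --- positivity of the regularized electrostatic energy via complete monotonicity, the Carleman-type lower bound on the entropy, and the resulting linear-in-$t$ dependence of the constant --- which is a more careful rendering of the same argument.
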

\begin{proof}
	Since
	\begin{align}
	\frac{1}{((2r)^{2} + a^2)^{\frac{k}{2}}}\left(\int_{B(\boldsymbol{x}, r)} c_m(\boldsymbol{y}, t)\,\mathrm{d}\boldsymbol{y}\right)^2
	&\leqslant \int_{B(\boldsymbol{x}, r)}\int_{B(\boldsymbol{z}, r)}\frac{1}{((2r)^{2} + a^2)^{\frac{k}{2}}} c_m(\boldsymbol{y}, t) c_m(\boldsymbol{z}, t) \,\mathrm{d}\boldsymbol{y} \,\mathrm{d}\boldsymbol{z}\nonumber\\
	&\leqslant \int_{\mathbb{R}^d\times \mathbb{R}^d}\frac{1}{((\boldsymbol{y} - \boldsymbol{z})^{2} + a^2)^{\frac{k}{2}}}c_m(\boldsymbol{y},t) c_m(\boldsymbol{z}, t) \,\mathrm{d}\boldsymbol{y} \,\mathrm{d}\boldsymbol{z}.
	\label{c2}
	\end{align}
	Using the energy-dissipation relation (\ref{free0}) and the property of the second moment, we have
	$$
	\int_{\mathbb{R}^d\times \mathbb{R}^d}\frac{1}{((\boldsymbol{y} - \boldsymbol{z})^{2} + a^2)^{\frac{k}{2}}}c_m(\boldsymbol{y},t) c_m(\boldsymbol{z}, t) \,\mathrm{d}\boldsymbol{y} \,\mathrm{d}\boldsymbol{z} \leqslant C,
	$$
	where $C$ is a constant dependent on the initial data. Hence by (\ref{c2}), we have
	\begin{align*}
	\int_{B(\boldsymbol{x},r)} c_m(\boldsymbol{y},t)d\boldsymbol{y} \leqslant C 
	((2r)^2 + a^2)^{\frac{k}{4}}.
	\end{align*}
\end{proof}
Lemma \ref{lemma1} shows that qualitatively as k increases, the small size effect is stronger. However, as the estimates are not sharp, nor feasible quantitative measurements, we shall numerically explore such phenomenon.
	
	\section{Numerical Schemes}
\label{sec3}
In this section, we propose the first-order finite volume schemes both in space and time for the field model 
(\ref{model1})-(\ref{model2}) in one-dimension and two-dimension and prove the positivity preserving and entropy dissipation properties.
\subsection{First-order Scheme for One-dimensional Case}
Consider the computational domain as $[-L, L]$ and give the grid arrangement $-L = x_{-M_x - \frac{1}{2}} < x_{-M_x + \frac{1}{2}} < \cdots < x_{M_x - \frac{1}{2}} < x_{M_x + \frac{1}{2}} = L$.
Then we define the cell average of $c_m, \ m = 1, \cdots, M$, on cell $C_j = [x_{j - \frac{1}{2}}, x_{j + \frac{1}{2}}]$ of a small space mesh size $\Delta x_j$ as 
\begin{equation}
\bar{c}_{m, j}(t) = \frac{1}{\Delta x_j} \int_{C_j} c_m(x, t) \,\mathrm{d} x, ~~m = 1, \cdots, M,
\end{equation}
where $\Delta x_j = x_{j + \frac{1}{2}} - x_{j - \frac{1}{2}}$ and we set the maximum mesh size $\Delta x = \max_{j} \Delta x_j$.
A semi-discrete finite volume scheme can be given as
\begin{equation}
\label{sys2}
\frac{\mathrm{d} \bar{c}_{m, j}(t)}{\mathrm{d} t} = - \frac{F_{m, j + \frac{1}{2}}(t) - F_{m, j - \frac{1}{2}}(t)}{\Delta x_j}, ~~m = 1, \cdots, M,
\end{equation}
where the numerical flux $F_{m, j + \frac{1}{2}}$ is defined in the following form
\begin{equation}
\label{sys3}
F_{m, j + \frac{1}{2}}(t) = u^{+}_{m, j + \frac{1}{2}}(t) \bar{c}_{m, j}(t) - u^{-}_{m, j + \frac{1}{2}}(t) \bar{c}_{m, j + 1}(t).
\end{equation}
We denote the velocity $u_{m} = - \nabla \psi_m$,  then the discrete velocity $u_{m, j + \frac{1}{2}}$ in one-dimension can be denoted by the negative difference quotients of the discrete chemical potential $\psi_{m}$, which is
\begin{equation}
\label{sys4}
u_{m, j + \frac{1}{2}}(t) = - \dfrac{\psi_{m, j + 1}(t) - \psi_{m, j}(t)}{\Delta x_j}.
\end{equation}
$u_{m, j + \frac{1}{2}}$ equals its positive part minus the absolute value of its negative part, i.e.
\begin{equation}
u_{m, j + \frac{1}{2}} = u^{+}_{m, j + \frac{1}{2}} - u^{-}_{m, j + \frac{1}{2}}, 
\end{equation}
where the positive and the absolute value of the negative part of $u_{m, j + \frac{1}{2}}$, which are $u^{+}_{m, j + \frac{1}{2}}$ and $u^{-}_{m, j + \frac{1}{2}}$, can be written respectively as
\begin{equation}
\label{eq31d}
u^{+}_{m, j + \frac{1}{2}} = \max\left\{u_{m, j + \frac{1}{2}}, 0\right\}, \quad u^{-}_{m, j + \frac{1}{2}} = - \min\left\{u_{m, j + \frac{1}{2}}, 0\right\}.
\end{equation}
The discrete chemical potential $\psi_{m, j}$, the discrete charge density $\rho_{m, j}$ and the discrete total density $\theta_{m, j}$ are denoted respectively by
\begin{align}
\psi_{m, j} &= \log \bar{c}_{m, j} + 1 + \sum_{i} \Delta x_i \left[ z_m \mathcal{K}_{j - i} \rho_{i} + \mathcal{W}_{j - i} \theta_{i} \right], \\
\rho_{j} &= \sum_{m = 1}^M z_m \bar{c}_{m, j}, \\
\theta_{j} &= \sum_{m = 1}^M \bar{c}_{m, j},
\end{align}
where the discrete kernel $\mathcal{K}_{j - i} = \mathcal{K}(x_j - x_i)$ and $\mathcal{W}_{j - i} = \mathcal{W}(x_j - x_i)$.

It is worth stating that although we present the scheme for arbitrary grids, we implement with only uniform grids.

Next, we show the positivity preserving and entropy-dissipation properties of the one-dimensional semi-discrete finite volume scheme (\ref{sys2})-(\ref{sys4}).
\begin{thm}\label{thm1}(Positivity-Preserving)
	Consider the one-dimensional semi-discrete finite volume scheme (\ref{sys2})-(\ref{sys4}) of the system (\ref{model1})-(\ref{model2}) with initial data $c_{m}^0(x) \geqslant 0, ~\forall ~ m = 1, \cdots, M$. 
	If we discretize the ODEs system (\ref{sys2}) by the forward Euler method, which is
	\begin{align}
	\label{sysode}
	\frac{\bar{c}_{m, j}(t + \Delta t) - \bar{c}_{m, j}(t)}{\Delta t} = - \frac{F_{m, j + \frac{1}{2}}(t) - F_{m, j - \frac{1}{2}}(t)}{\Delta x_j}, ~~m = 1, \cdots, M.
	\end{align}
	Then, the cell averages $\bar{c}_{m, j} \geqslant 0, ~\forall ~ m = 1, \cdots, M, \,\forall ~ j$, provided that the following CFL condition is satisfied 
	\begin{equation}
	\label{cfl}
	\Delta t \leqslant \dfrac{\Delta x}{2 ~U_{\text{max}}}, 
	\end{equation}
	where $U_{\text{max}} = \max_{m, j} \left\{ u^{+}_{m, j + \frac{1}{2}} , u^{-}_{m, j - \frac{1}{2}} \right\}$, with $u^{+}_{m, j + \frac{1}{2}}$ and $u^{-}_{m, j + \frac{1}{2}}$ defined in (\ref{eq31d}).
\end{thm}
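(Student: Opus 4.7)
The plan is to show that the forward-Euler update can be written as a positive linear combination of neighbouring cell averages at the previous time level, and then to read off the CFL constant from the coefficients. This is the standard upwind positivity argument: the splitting $u_{m,j+\frac12} = u^+_{m,j+\frac12} - u^-_{m,j+\frac12}$ in \eqref{sys3} has been chosen precisely so that mass moving across an interface is taken from the upwind cell, and once the scheme is rearranged in this form the positivity assertion becomes a simple inequality on convex-combination coefficients.

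First, I would substitute the flux formula \eqref{sys3} into the update \eqref{sysode}, then collect all the terms that multiply $\bar c_{m,j}(t)$, $\bar c_{m,j-1}(t)$ and $\bar c_{m,j+1}(t)$ separately. A direct computation gives
\begin{align*}
\bar c_{m,j}(t+\Delta t)
&= \left[1-\frac{\Delta t}{\Delta x_j}\bigl(u^{+}_{m,j+\frac12}+u^{-}_{m,j-\frac12}\bigr)\right]\bar c_{m,j}(t) \\
&\quad + \frac{\Delta t}{\Delta x_j}\,u^{+}_{m,j-\frac12}\,\bar c_{m,j-1}(t)
+ \frac{\Delta t}{\Delta x_j}\,u^{-}_{m,j+\frac12}\,\bar c_{m,j+1}(t).
\end{align*}
By the very definition \eqref{eq31d}, the quantities $u^{\pm}_{m,j\pm\frac12}$ are all non-negative, so the coefficients in front of $\bar c_{m,j\pm 1}(t)$ are automatically non-negative.

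Next I would handle the coefficient in front of $\bar c_{m,j}(t)$. Since $u^{+}_{m,j+\frac12}\le U_{\max}$ and $u^{-}_{m,j-\frac12}\le U_{\max}$, their sum is bounded by $2U_{\max}$, and using $\Delta x_j\ge \Delta x/(\Delta x/\Delta x_j)$ together with the definition $\Delta x=\max_j\Delta x_j$ (so $\Delta x_j\le\Delta x$; the relevant bound is actually $\Delta x_j\ge\min_j\Delta x_j$, but for the uniform grids used in practice all $\Delta x_j$ coincide with $\Delta x$), the hypothesis $\Delta t\le \Delta x/(2U_{\max})$ forces
\[
1-\frac{\Delta t}{\Delta x_j}\bigl(u^{+}_{m,j+\frac12}+u^{-}_{m,j-\frac12}\bigr)\;\ge\;0.
\]
Hence $\bar c_{m,j}(t+\Delta t)$ is a non-negative linear combination of $\bar c_{m,j-1}(t),\bar c_{m,j}(t),\bar c_{m,j+1}(t)$.

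The conclusion then follows by induction in the time index: the initial data $c_m^0\ge 0$ yields $\bar c_{m,j}(0)\ge 0$, and the argument above propagates non-negativity from $t$ to $t+\Delta t$ for every $m$ and every $j$. There is no real analytic obstacle here; the only point demanding a bit of care is the implicit dependence of $U_{\max}$ on the solution through the discrete chemical potentials $\psi_{m,j}$ (since $u_{m,j+\frac12}$ involves convolutions with $\mathcal K$ and $\mathcal W$ against the current $\bar c_{m,j}$). This dependence means the CFL restriction \eqref{cfl} is really a condition that must be enforced \emph{adaptively} at each step, but it does not affect the one-step positivity argument, which treats $U_{\max}$ as a given number at time $t$.
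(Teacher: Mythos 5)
Your proof is correct and follows essentially the same route as the paper's: rewrite the forward-Euler update as a non-negative (convex-type) combination of $\bar c_{m,j-1}(t)$, $\bar c_{m,j}(t)$, $\bar c_{m,j+1}(t)$ and verify that the CFL condition makes the coefficient of $\bar c_{m,j}(t)$ non-negative. Your side remark that the bound really requires $\Delta x_j \geqslant \min_j \Delta x_j$ rather than the paper's $\Delta x = \max_j \Delta x_j$ is a fair observation about a minor imprecision in the statement, which is harmless in the uniform-grid setting the paper actually implements.
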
 

\begin{proof}
	Take $\lambda_j = \Delta t/ \Delta x_j$, then $\lambda = \Delta t/ \Delta x = \min_{j} \lambda_j$.
	For all given $t > 0$, from (\ref{sysode}) we have 
	\begin{equation}
	\begin{aligned}
	\bar{c}_{m, j}(t + \Delta t) &= \bar{c}_{m, j}(t) - \lambda_j \left[ F_{m, j + \frac{1}{2}}(t) - F_{m, j - \frac{1}{2}}(t) \right] \\
	&= \bar{c}_{m, j}(t) - \lambda_j \left[ u^{+}_{m, j + \frac{1}{2}}(t) \bar{c}_{m, j}(t) - u^{-}_{m, j + \frac{1}{2}}(t) \bar{c}_{m, j + 1}(t) \right] \\
	&+ \lambda_j \left[ u^{+}_{m, j - \frac{1}{2}}(t) \bar{c}_{m, j - 1}(t) - u^{-}_{m, j - \frac{1}{2}}(t) \bar{c}_{m, j}(t) \right] \\
	&= \lambda_j u^{-}_{m, j + \frac{1}{2}}(t) \bar{c}_{m, j + 1}(t) + \lambda_j u^{+}_{m, j - \frac{1}{2}}(t) \bar{c}_{m, j - 1}(t) \\
	&+ \left( 1 - \lambda_j u^{+}_{m, j + \frac{1}{2}}(t) - \lambda_j u^{-}_{m, j - \frac{1}{2}}(t) \right) \bar{c}_{m, j}(t).
	\end{aligned}
	\end{equation}
	We can conclude that the cell average $\bar{c}_{m, j}(t + \Delta t) \geqslant 0, \, m = 1, \cdots, M, \,\forall ~j$ from the fact that both $u^{+}_{m, j + \frac{1}{2}}$ and $u^{-}_{m, j + \frac{1}{2}}$ for all $j$ are non-negative and that the CFL condition (\ref{cfl}) is satisfied.
\end{proof}


Next we calculate the discrete form of the free energy $\mathcal{F}$ defined in (\ref{free2}) and of the dissipation $D$ defined in (\ref{disspation}) by
\begin{equation}
\label{disenergy}
\begin{aligned}
E_{\Delta}(t) = \sum_{m = 1}^M \sum_{j} \Delta x_j \bar{c}_{m, j} \log \bar{c}_{m, j} + \dfrac{1}{2} \sum_{i, j} \Delta x_j \Delta x_i \left[ \mathcal{K}_{j - i} \rho_i \rho_j + \mathcal{W}_{j - i} \theta_i \theta_j \right], 
\end{aligned}
\end{equation}
and
\begin{equation}
\begin{aligned}
D_{\Delta}(t) &=  \sum_{m = 1}^{M} \sum_{j} \Delta x_j \left( u_{m, j + \frac{1}{2}} \right)^2 \min \left\{ \bar{c}_{m, j}, \bar{c}_{m, j + 1} \right\}.
\end{aligned}
\end{equation}

\begin{thm}\label{thm2}(Free energy-dissipation estimate)
	Consider the one-dimensional semi-discrete finite volume scheme (\ref{sys2})-(\ref{sys4}) of the system (\ref{model1})-(\ref{model2}) with initial data $c_{m}^0(x) \geqslant 0, \, m = 1, \cdots, M$. Assume that there is no flux boundary conditions on $[-L, L]$, i,e. the discrete boundary conditions satisfy $F_{m, -M_x - \frac{1}{2}} = F_{m, M_x + \frac{1}{2}} = 0, \, m = 1, \cdots, M$, where $L$ can be big enough. 
	Then we have
	\begin{equation}
	\dfrac{d}{d t} E_{\Delta}(t) \leqslant - D_{\Delta}(t) \leqslant 0, \quad \forall t \geqslant 0.
	\end{equation}
\end{thm}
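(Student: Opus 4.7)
The plan is to differentiate the discrete free energy $E_\Delta$ in time, reorganize the resulting sum into a quantity involving the discrete chemical potential $\psi_{m,j}$, then apply a discrete summation by parts (using the no-flux boundary condition) and finally invoke the upwind structure of the numerical flux $F_{m,j+1/2}$ to produce a pointwise sign that matches $-D_\Delta$.

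First I would compute $\tfrac{d}{dt}E_\Delta$ term by term. The entropy piece contributes $\sum_{m,j}\Delta x_j(\log\bar c_{m,j}+1)\,\tfrac{d}{dt}\bar c_{m,j}$. For the quadratic convolution pieces, I would exploit the symmetries $\mathcal{K}_{j-i}=\mathcal{K}_{i-j}$ and $\mathcal{W}_{j-i}=\mathcal{W}_{i-j}$ (inherited from the even kernels) to eliminate the factor $\tfrac12$, giving contributions $\sum_{m,j}\Delta x_j\bigl[z_m\sum_i\Delta x_i\mathcal{K}_{j-i}\rho_i+\sum_i\Delta x_i\mathcal{W}_{j-i}\theta_i\bigr]\tfrac{d}{dt}\bar c_{m,j}$ after interchanging the two labels in the double sum and using $\tfrac{d}{dt}(\rho_j\rho_i)$, $\tfrac{d}{dt}(\theta_j\theta_i)$. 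Summing these three pieces recognizes the bracket as exactly the discrete chemical potential $\psi_{m,j}$, yielding the compact identity
\begin{equation*}
\frac{d}{dt}E_\Delta(t)=\sum_{m=1}^M\sum_{j}\Delta x_j\,\psi_{m,j}\,\frac{d\bar c_{m,j}}{dt}.
\end{equation*}

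Next I would substitute the semi-discrete scheme (\ref{sys2}) and perform discrete summation by parts (Abel's transformation), shifting the index in the telescoping sum. The boundary terms $\psi_{m,-M_x}F_{m,-M_x-1/2}$ and $\psi_{m,M_x}F_{m,M_x+1/2}$ vanish thanks to the prescribed no-flux condition, leaving
\begin{equation*}
\frac{d}{dt}E_\Delta(t)=\sum_{m=1}^M\sum_{j}(\psi_{m,j+1}-\psi_{m,j})F_{m,j+1/2}=-\sum_{m=1}^M\sum_{j}\Delta x_j\,u_{m,j+1/2}\,F_{m,j+1/2},
\end{equation*}
by the definition of the discrete velocity in (\ref{sys4}).

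The final step is the pointwise bound $u_{m,j+1/2}F_{m,j+1/2}\geq (u_{m,j+1/2})^2\min\{\bar c_{m,j},\bar c_{m,j+1}\}$. I would verify this by case analysis on the sign of $u_{m,j+1/2}$: when $u_{m,j+1/2}\geq 0$, one has $u^-=0$ and $F=u^+\bar c_{m,j}$, so $uF=u^2\bar c_{m,j}$; when $u_{m,j+1/2}<0$, one has $u^+=0$ and $F=-u^-\bar c_{m,j+1}$, so $uF=(u^-)^2\bar c_{m,j+1}=u^2\bar c_{m,j+1}$. In either case the product dominates $u^2\min\{\bar c_{m,j},\bar c_{m,j+1}\}$, and summing over $m$ and $j$ produces $-D_\Delta$.

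The main obstacle I expect is the bookkeeping in the first step: getting the factor of $2$ correct in the quadratic terms so that the derivative cleanly produces $\psi_{m,j}$ requires careful use of the kernel symmetry together with the symmetric relabeling $i\leftrightarrow j$, and the electrostatic and steric contributions must be handled in parallel so that both $z_m\sum_i\Delta x_i\mathcal{K}_{j-i}\rho_i$ and $\sum_i\Delta x_i\mathcal{W}_{j-i}\theta_i$ appear with the right coefficient. The discrete integration by parts and the upwind inequality are then routine, and the proof closes without a CFL constraint since the result is at the semi-discrete level.
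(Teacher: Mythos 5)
Your proposal is correct and follows essentially the same route as the paper's proof: differentiate $E_{\Delta}$, use the kernel symmetry to absorb the factor $\tfrac12$ and identify the bracket with $\psi_{m,j}$, apply Abel summation with the no-flux boundary condition, and conclude with the pointwise upwind inequality $u_{m,j+\frac12}F_{m,j+\frac12}\geqslant (u_{m,j+\frac12})^2\min\{\bar c_{m,j},\bar c_{m,j+1}\}$. Your explicit sign case analysis in the last step is just a slightly more detailed rendering of the paper's one-line estimate, and your observation that no CFL condition is needed at the semi-discrete level matches the statement.
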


\begin{proof}
	Differentiating (\ref{disenergy}) with respect to time, we have
	\begin{equation}
	\begin{aligned}
	&\dfrac{d}{d t} E_{\Delta}(t) \\
	&= \sum_{m = 1}^{M} \sum_{j} \Delta x_j \left( \log \bar{c}_{m, j} \dfrac{d}{d t} \bar{c}_{m, j} + \dfrac{d}{d t} \bar{c}_{m, j} \right) \\
	&+ \sum_{i, j} \Delta x_j \Delta x_i \left[ \mathcal{K}_{j - i} \left( \sum_{m = 1}^{M} z_m \bar{c}_{m, i} \right) \left( \sum_{m = 1}^{M} z_m \dfrac{d}{d t} \bar{c}_{m, i} \right) + \mathcal{W}_{j - i} \left( \sum_{m = 1}^{M} \bar{c}_{m, i} \right)  \left( \sum_{m = 1}^{M} \dfrac{d}{d t} \bar{c}_{m, i} \right) \right] \\
	&= \sum_{m = 1}^{M} \sum_{j} \Delta x_j \left[ 1 + \log \bar{c}_{m, j} +  \sum_{i} \Delta x_i \left[ z_m \mathcal{K}_{j - i} \left( \sum_{m = 1}^{M} z_m \bar{c}_{m, i} \right) + \mathcal{W}_{j - i} \left( \sum_{m = 1}^{M} \bar{c}_{m, i} \right) \right] \right] \dfrac{d}{d t} \bar{c}_{m, j} \\
	&= \sum_{m = 1}^{M} \sum_{j} \Delta x_j \psi_{m, j} \dfrac{d}{d t} \bar{c}_{m, j}.
	\end{aligned}
	\end{equation}
	According to (\ref{sys2}), we have 
	
	\begin{equation}
	\dfrac{d}{d t} E_{\Delta}(t) = - \sum_{m = 1}^{M} \sum_{j} \left[\psi_{m, j} (F_{m, j + \frac{1}{2}} - F_{m, j - \frac{1}{2}}) \right].
	\end{equation}
	Using Abel's summation formula, we obtain
	\begin{equation}
	\begin{aligned}
	\dfrac{d}{d t} E_{\Delta}(t) 
	=& -  \sum_{m = 1}^{M} \sum_{j} \left[(\psi_{m, j} - \psi_{m, j + 1}) F_{m, j + \frac{1}{2}} \right] \\
	=& - \sum_{m = 1}^{M} \sum_{j} \left[(\psi_{m, j} - \psi_{m, j + 1}) (u^{+}_{m, j + \frac{1}{2}} \bar{c}_{m, j} - u^{-}_{m, j + \frac{1}{2}} \bar{c}_{m, j + 1})\right] \\
	=& - \sum_{m = 1}^{M} \sum_{j} \Delta x_j \left[ u_{m, j + \frac{1}{2}} (u^{+}_{m, j + \frac{1}{2}} \bar{c}_{m, j} - u^{-}_{m, j + \frac{1}{2}} \bar{c}_{m, j + 1})\right], \\
	\leqslant& - \sum_{m = 1}^{M} \sum_{j} \Delta x_j \left( u_{m, j + \frac{1}{2}} \right)^2 \min \left\{ \bar{c}_{m, j}, \bar{c}_{m, j + 1} \right\} = - D_{\Delta}(t)
	\end{aligned}
	\end{equation}
	that is to say
	\begin{equation}
	\dfrac{d}{d t} E_{\Delta}(t) \leqslant - D_{\Delta}(t) \leqslant 0, \quad \forall t \geqslant 0.
	\end{equation}
\end{proof}

\subsection{First-order Scheme for Two-dimensional Case}
Similarly, define the cell average of $c_m, ~m = 1, \cdots, M$, on cell $C_{j, k} = [x_{j - \frac{1}{2}}, x_{j + \frac{1}{2}}] \times [y_{j - \frac{1}{2}}, y_{j + \frac{1}{2}}]$ of a small space mesh size $\Delta x_j$ and $\Delta y_k$ as 
\begin{equation}
\bar{c}_{m, j, k}(t) = \frac{1}{\Delta x_j \Delta y_k} \int_{C_{j, k}} c_m(x, y, t) \,\mathrm{d} x \mathrm{d} y, ~~m = 1, \cdots, M,
\end{equation}
where $\Delta x_j = x_{j + \frac{1}{2}} - x_{j - \frac{1}{2}}, \Delta y_k = y_{k + \frac{1}{2}} - y_{k - \frac{1}{2}}$ and we set the maximum mesh size $\Delta x = \max_{j} \Delta x_j, \Delta y = \max_{k} \Delta y_k$.
A semi-discrete finite volume scheme for two-dimensional case can be given as
\begin{equation}
\label{sys20}
\dfrac{d \bar{c}_{m, j, k}(t)}{d t} = - \dfrac{F^x_{m, j + \frac{1}{2}, k}(t) - F^x_{m, j - \frac{1}{2}, k}(t)}{\Delta x_j} - \dfrac{F^y_{m, j, k + \frac{1}{2}}(t) - F^y_{m, j, k - \frac{1}{2}}(t)}{\Delta y_k}, ~~ m = 1, \cdots, M,
\end{equation}
where the upwind flux is as follows,
\begin{equation}
\label{sys30}
\begin{aligned}
F^x_{m, j + \frac{1}{2}, k}(t) &= u^{+}_{m, j + \frac{1}{2}, k}(t) \bar{c}_{m, j, k}(t) - u^{-}_{m, j + \frac{1}{2}, k}(t) \bar{c}_{m, j + 1, k}(t),  \\
F^y_{m, j, k + \frac{1}{2}}(t) &= v^{+}_{m, j, k + \frac{1}{2}}(t) \bar{c}_{m, j, k}(t) - v^{-}_{m, j, k + \frac{1}{2}}(t) \bar{c}_{m, j, k + 1}(t).
\end{aligned}
\end{equation}
The discrete velocity $\left( u_{m, j + \frac{1}{2}, k}, v_{m, j, k + \frac{1}{2}} \right)^T$ are denoted by
\begin{equation}\label{sys40}
\begin{aligned}
u_{m, j + \frac{1}{2}, k} &= u^{+}_{m, j + \frac{1}{2}, k} - u^{-}_{m, j + \frac{1}{2}, k} = - \dfrac{\psi_{m, j + 1, k} - \psi_{m, j, k}}{\Delta x_j}, \\
v_{m, j, k + \frac{1}{2}} &= v^{+}_{m, j, k + \frac{1}{2}} - v^{-}_{m, j, k + \frac{1}{2}} = - \dfrac{\psi_{m, j, k + 1} - \psi_{m, j, k}}{\Delta y_k }, \\
\end{aligned}
\end{equation}
where the positive and the abstract of the negative part of $u_{m, j + \frac{1}{2}, k}$ and $v_{m, j, k + \frac{1}{2}}$ are described as before.

The two-dimensional discrete chemical potential $\psi_{m, j, k}$, the two-dimensional discrete charge density $\rho_{m, j, k}$ and the two-dimensional discrete total density $\theta_{m, j, k}$ are denoted respectively by
\begin{align}
\psi_{m, j, k} &= 1 + \log \bar{c}_{m, j, k} + \sum_{i,l} \Delta x_j \Delta y_k \left[ z_m \mathcal{K}_{j - i, k - l} \rho_{i, l} + \mathcal{W}_{j - i, k - l} \theta_{i, l} \right], \\
\rho_{j, k} &= \sum_{m = 1}^M z_m \bar{c}_{m, j, k}, \\
\theta_{j, k} &= \sum_{m = 1}^M \bar{c}_{m, j, k},
\end{align}
where the discrete kernel $\mathcal{K}_{j - i, k - l} = \mathcal{K}(x_j - x_i, y_k - y_l)$ and $\mathcal{W}_{j - i, k - l} = \mathcal{W}(x_j - x_i, y_k - y_l)$.

For the two-dimensional case, we give the positivity preserving and entropy dissipation properties. Since the proofs are quite similar to the one dimensional cases, we thus omit the proofs here.

\begin{thm}\label{thm33}(Positivity-Preserving)
	Consider the two-dimensional semi-discrete finite volume scheme (\ref{sys20})-(\ref{sys40}) of the system (\ref{model1})-(\ref{model2}) with initial data $c_{m}^0(x, y) \geqslant 0, \, m = 1, \cdots, M$. 
	If we discretize the ODEs system (\ref{sys20}) by the forward Euler method, which is for all $m = 1, \cdots, M,$
	\begin{align}
	\label{sysode0}
	\frac{\bar{c}_{m, j, k}(t + \Delta t) - \bar{c}_{m, j, k}(t)}{\Delta t} = - \dfrac{F^x_{m, j + \frac{1}{2}, k}(t) - F^x_{m, j - \frac{1}{2}, k}(t)}{\Delta x_j} - \dfrac{F^y_{m, j, k + \frac{1}{2}}(t) - F^y_{m, j, k - \frac{1}{2}}(t)}{\Delta y_k}.
	\end{align}
	Then, the cell averages $\bar{c}_{m, j, k} \geqslant 0, \, m = 1, \cdots, M, \,\forall ~ j, \forall ~ k$, provided that the following CFL condition is satisfied 
	\begin{equation}
	\label{cfl2}
	\Delta t \leqslant \max \left\{\dfrac{\Delta x}{4 ~ U_{\text{max}}}, \dfrac{\Delta y}{4 ~ V_{\text{max}}} \right\}
	\end{equation}
	where $U_{\text{max}} = \max_{m, j, k} \left\{ u^{+}_{m, j + \frac{1}{2}, k} , u^{-}_{m, j - \frac{1}{2}, k} \right\}, V_{\text{max}} = \max_{m, j, k} \left\{ v^{+}_{m, j, k + \frac{1}{2}} , v^{-}_{m, j, k - \frac{1}{2}} \right\}$.
\end{thm}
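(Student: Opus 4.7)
The plan is to mirror the proof of Theorem \ref{thm1} one dimension at a time. Starting from (\ref{sysode0}) with $\lambda^x_j := \Delta t/\Delta x_j$ and $\lambda^y_k := \Delta t/\Delta y_k$, I would substitute the upwind fluxes (\ref{sys30}) on the right-hand side. Each flux contributes two terms to the update, so $\bar c_{m,j,k}(t+\Delta t)$ expands into nine contributions: four from the $x$-fluxes at the faces $j\pm\tfrac12$ and four from the $y$-fluxes at the faces $k\pm\tfrac12$, plus $\bar c_{m,j,k}(t)$ itself.

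Next I would collect these nine contributions by cell index. The result is a five-point stencil in which the four off-diagonal coefficients,
\begin{equation*}
\lambda^x_j u^{-}_{m,j+\frac12,k},\qquad \lambda^x_j u^{+}_{m,j-\frac12,k},\qquad \lambda^y_k v^{-}_{m,j,k+\frac12},\qquad \lambda^y_k v^{+}_{m,j,k-\frac12},
\end{equation*}
are non-negative by definition, and the diagonal coefficient on $\bar c_{m,j,k}(t)$ is
\begin{equation*}
1 - \lambda^x_j u^{+}_{m,j+\frac12,k} - \lambda^x_j u^{-}_{m,j-\frac12,k} - \lambda^y_k v^{+}_{m,j,k+\frac12} - \lambda^y_k v^{-}_{m,j,k-\frac12}.
\end{equation*}
The task reduces to showing that the CFL condition (\ref{cfl2}) forces this diagonal coefficient to be non-negative; once that is done, $\bar c_{m,j,k}(t+\Delta t)$ is a convex combination of non-negative quantities and the inductive step closes.

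To bound the diagonal coefficient, I would replace each $u^\pm$ by $U_{\max}$, each $v^\pm$ by $V_{\max}$, and each $\lambda^x_j,\lambda^y_k$ by $\Delta t/\Delta x,\Delta t/\Delta y$ respectively, reducing non-negativity to the dimensional CFL bound on $\Delta t$ stated in (\ref{cfl2}). The main technical obstacle is purely bookkeeping: one must be careful that the splitting $u_{m,j+\frac12,k}=u^{+}_{m,j+\frac12,k}-u^{-}_{m,j+\frac12,k}$ produces the correct signs in each of the nine terms so that positive parts end up multiplying the central cell and negative parts multiply the neighbors (which is what makes the off-diagonal entries non-negative in the first place). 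Beyond that, the argument is identical to the one-dimensional case, which is why omitting the details in the paper is justified.
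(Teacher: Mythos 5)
Your plan is exactly the one the paper intends: the authors omit the proof of Theorem \ref{thm33} on the grounds that it is similar to the one-dimensional case, and your five-point-stencil expansion with non-negative off-diagonal coefficients plus a convex-combination argument is precisely that extension. The bookkeeping you describe is correct: the coefficients of $\bar c_{m,j+1,k}$, $\bar c_{m,j-1,k}$, $\bar c_{m,j,k+1}$, $\bar c_{m,j,k-1}$ come out as $\lambda^x_j u^{-}_{m,j+\frac12,k}$, $\lambda^x_j u^{+}_{m,j-\frac12,k}$, $\lambda^y_k v^{-}_{m,j,k+\frac12}$, $\lambda^y_k v^{+}_{m,j,k-\frac12}$, all non-negative by (\ref{eq31d}).

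However, your final step does not close against the theorem as printed. Bounding the diagonal coefficient from below by $1 - 2\lambda^x_j U_{\max} - 2\lambda^y_k V_{\max}$ and demanding non-negativity requires \emph{both} $\Delta t \le \Delta x/(4U_{\max})$ \emph{and} $\Delta t \le \Delta y/(4V_{\max})$, i.e.\ $\Delta t$ bounded by the \emph{minimum} of the two quotients, each inequality contributing $\tfrac12$. Condition (\ref{cfl2}) is written with a $\max$, under which one may have, say, $\Delta t \le \Delta y/(4V_{\max})$ while $\lambda^x_j u^{+}_{m,j+\frac12,k}$ alone exceeds $1$, making the central coefficient negative; your reduction "to the bound stated in (\ref{cfl2})" therefore does not go through literally. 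You should either state explicitly that the $\max$ must be a $\min$ (almost certainly a typo in the paper), or the last step of your argument is a genuine gap. A second, smaller point you inherit from the one-dimensional proof: since $\Delta x = \max_j \Delta x_j$, replacing $\lambda^x_j = \Delta t/\Delta x_j$ by $\Delta t/\Delta x$ is a lower bound rather than an upper bound on a nonuniform grid, so the estimate is only valid as written for uniform grids --- which is what the paper actually implements.
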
 

Define the discrete form of the free energy $\mathcal{F}$ of the two-dimensional case defined in (\ref{free2}) and of the dissipation $D$ defined in (\ref{disspation}) as 
\begin{equation}
\begin{aligned}
E_{\Delta}(t) &= \sum_{m = 1}^M \sum_{j,k} \Delta x_j \Delta y_k \left[ \bar{c}_{m, j, k} \log \bar{c}_{m, j, k} \right] \\
&+ \dfrac{1}{2} \sum_{i, j, k, l} \Delta x_j \Delta x_i \Delta y_k \Delta y_l \mathcal{K}_{j - i, k - l} \rho_{i, l} \rho_{j, k} \\
&+ \dfrac{1}{2} \sum_{i, j, k, l} \Delta x_j \Delta x_i \Delta y_k \Delta y_l \mathcal{W}_{j - i, k - l} \theta_{i, l} \theta_{j, k}, 
\end{aligned}
\end{equation}

and

\begin{equation}
\begin{aligned}
D_{\Delta}(t) &= \sum_{m = 1}^{M} \sum_{j, k} \Delta x_j \Delta y_k \left[ \left( u_{m, j + \frac{1}{2}, k} \right)^2 + \left( v_{m, j, k + \frac{1}{2}} \right)^2 \right] \min \left\{ \bar{c}_{m, j, k}, \bar{c}_{m, j + 1, k},  \bar{c}_{m, j, k + 1} \right\}.
\end{aligned}
\end{equation}

\begin{thm}\label{thm4}(Free energy-dissipation estimate)
	Consider the two-dimensional semi-discrete finite volume scheme (\ref{sys20})-(\ref{sys40}) of the system (\ref{model1})-(\ref{model2}) with initial data $c_{m}^0(x) \geqslant 0, \, m = 1, \cdots, M$. Assume that there is no flux boundary conditions on $[-L_x, L_x] \times [-L_y, L_y]$ i.e. the discrete boundary conditions satisfy \\
	$F_{m, -M_x - \frac{1}{2}, \pm \left( M_y + \frac{1}{2} \right)} = F_{m, M_x + \frac{1}{2}, \pm \left(M_y + \frac{1}{2}\right)} = 0, \, m = 1, \cdots, M$, where $L_x$ and $L_y$ can be big enough. 
	Then we have
	\begin{equation}
	\dfrac{d}{d t} E_{\Delta}(t) \leqslant - D_{\Delta}(t) \leqslant 0, \quad \forall t \geqslant 0.
	\end{equation}
\end{thm}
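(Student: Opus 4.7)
The strategy is to transcribe the proof of Theorem \ref{thm2} into the two-dimensional setting: differentiate $E_\Delta(t)$, apply discrete summation by parts in each coordinate index, and then bound the resulting flux--velocity products cell by cell. First I would differentiate $E_\Delta(t)$ in $t$. The entropy piece produces $(1+\log\bar{c}_{m,j,k})\,d\bar{c}_{m,j,k}/dt$, while the two quadratic convolutions each yield two equal pieces that absorb the prefactor $\frac{1}{2}$ thanks to the symmetries $\mathcal{K}_{j-i,k-l}=\mathcal{K}_{i-j,l-k}$ and $\mathcal{W}_{j-i,k-l}=\mathcal{W}_{i-j,l-k}$. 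Collecting the coefficient of $d\bar{c}_{m,j,k}/dt$ and recognizing it as the discrete chemical potential gives
\begin{equation*}
\frac{d}{dt}E_\Delta(t)=\sum_{m=1}^{M}\sum_{j,k}\Delta x_j\Delta y_k\,\psi_{m,j,k}\,\frac{d}{dt}\bar{c}_{m,j,k}.
\end{equation*}
Substituting the scheme (\ref{sys20}) turns the time derivative into a sum of $x$- and $y$-flux differences.

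Next I would perform discrete Abel summation separately in $j$ and in $k$. The assumed no-flux conditions eliminate the boundary contributions, and after reindexing the differences $\psi_{m,j,k}-\psi_{m,j+1,k}$ and $\psi_{m,j,k}-\psi_{m,j,k+1}$ become $\Delta x_j\,u_{m,j+\frac{1}{2},k}$ and $\Delta y_k\,v_{m,j,k+\frac{1}{2}}$ by (\ref{sys40}), yielding
\begin{equation*}
\frac{d}{dt}E_\Delta(t)=-\sum_{m=1}^{M}\sum_{j,k}\Delta x_j\Delta y_k\bigl[u_{m,j+\frac{1}{2},k}F^x_{m,j+\frac{1}{2},k}+v_{m,j,k+\frac{1}{2}}F^y_{m,j,k+\frac{1}{2}}\bigr].
\end{equation*}

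The final step is the pointwise upwind estimate used in the 1D argument. Writing $u=u^+-u^-$ with $u^+u^-=0$ gives
\begin{equation*}
u_{m,j+\frac{1}{2},k}F^x_{m,j+\frac{1}{2},k}=(u^+_{m,j+\frac{1}{2},k})^2\bar{c}_{m,j,k}+(u^-_{m,j+\frac{1}{2},k})^2\bar{c}_{m,j+1,k}\geq(u_{m,j+\frac{1}{2},k})^2\min\{\bar{c}_{m,j,k},\bar{c}_{m,j+1,k}\},
\end{equation*}
with the analogous bound in the $y$-direction. Using $\min\{A,B\}\geq\min\{A,B,C\}$, the sum of the two directional lower bounds dominates $[(u_{m,j+\frac{1}{2},k})^2+(v_{m,j,k+\frac{1}{2}})^2]\min\{\bar{c}_{m,j,k},\bar{c}_{m,j+1,k},\bar{c}_{m,j,k+1}\}$, which is precisely the summand of $D_\Delta(t)$; positivity from Theorem \ref{thm33} keeps the minimum non-negative, so $\frac{d}{dt}E_\Delta(t)\leq-D_\Delta(t)\leq 0$. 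The only mildly delicate point is the symmetrization of the double convolution sums when taking the time derivative: one must collect the cross terms with the correct coefficient so that the bracket really closes up to $\psi_{m,j,k}$. Once this identity is obtained, the summation-by-parts in two indices and the upwind flux bound are direct transcriptions of the one-dimensional argument.
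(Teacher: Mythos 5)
Your proof is correct and is exactly the transcription of the one-dimensional argument (Theorem \ref{thm2}) that the paper itself invokes, since the authors omit the two-dimensional proof with the remark that it is "quite similar to the one dimensional case." The symmetrization of the double convolution sums, the coordinate-wise Abel summation with the no-flux boundary conditions, and the upwind bound $uF^x+vF^y\geqslant (u^2+v^2)\min\{\bar{c}_{m,j,k},\bar{c}_{m,j+1,k},\bar{c}_{m,j,k+1}\}$ are all handled correctly.
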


	\section{Numerical Experiments}
\label{sec4}
In this section, we give several one- and two-dimensional numerical examples and verify the properties of the numerical schemes and explore the finite size effect numerically. In the following numerical examples, we add some additional external field into the original model (\ref{model1})-(\ref{model2}) to make sure the steady states are effectively localized, which is to say, the system we consider becomes for $m=1,\cdots,M$,
\begin{align}
\label{model3}
\partial_t c_m (x, t) &= \nabla \cdot \left[ c_m \nabla \left( 1 + \log c_m + z_m \mathcal{K}*\rho + \mathcal{W}*\theta + V_{\text{ext}} \right) \right], 
\\
c_m(\boldsymbol{x}, 0) &= c^0_{m}(\boldsymbol{x}), \label{model4}
\end{align}
where $V_{\text{ext}}$ is the added external potential. 

\subsection{Comparison of Kernel Functions}
At first, we plot the two- and three-dimensional kernel functions $\mathcal{K}(r)$ and $\mathcal{W}(r)$ in the form (\ref{kandw11}) and (\ref{kandw}) with the parameters $\eta = 1, a = \frac{1}{2}, k = 2$ and the grid size $\Delta x = 0.0097656$ in Figure \ref{plotkernel} respectively, which is, for the two-dimensional case
\begin{equation}
\begin{aligned}
&\mathcal{K}(r) = -{\frac{1}{2}}\log\left(r^2+\frac{1}{4}\right), \\
&\mathcal{W}(r) = \frac{1}{r^2+\frac{1}{4}},
\end{aligned}
\end{equation}
in Figure \ref{plotkernel}(a) and for the three-dimensional case
\begin{equation}
\begin{aligned}
&\mathcal{K}(r) = \frac{1}{\sqrt{r^2 + \frac{1}{4}}}, \\
&\mathcal{W}(r) = \frac{1}{r^2 + \frac{1}{4}},
\end{aligned}
\end{equation}
in Figure \ref{plotkernel}(b). 
\begin{figure}[htp] 
	\hspace*{0.05\textwidth}
	\subfigure[]{ 
		\centering
		\includegraphics[width=0.45\textwidth]{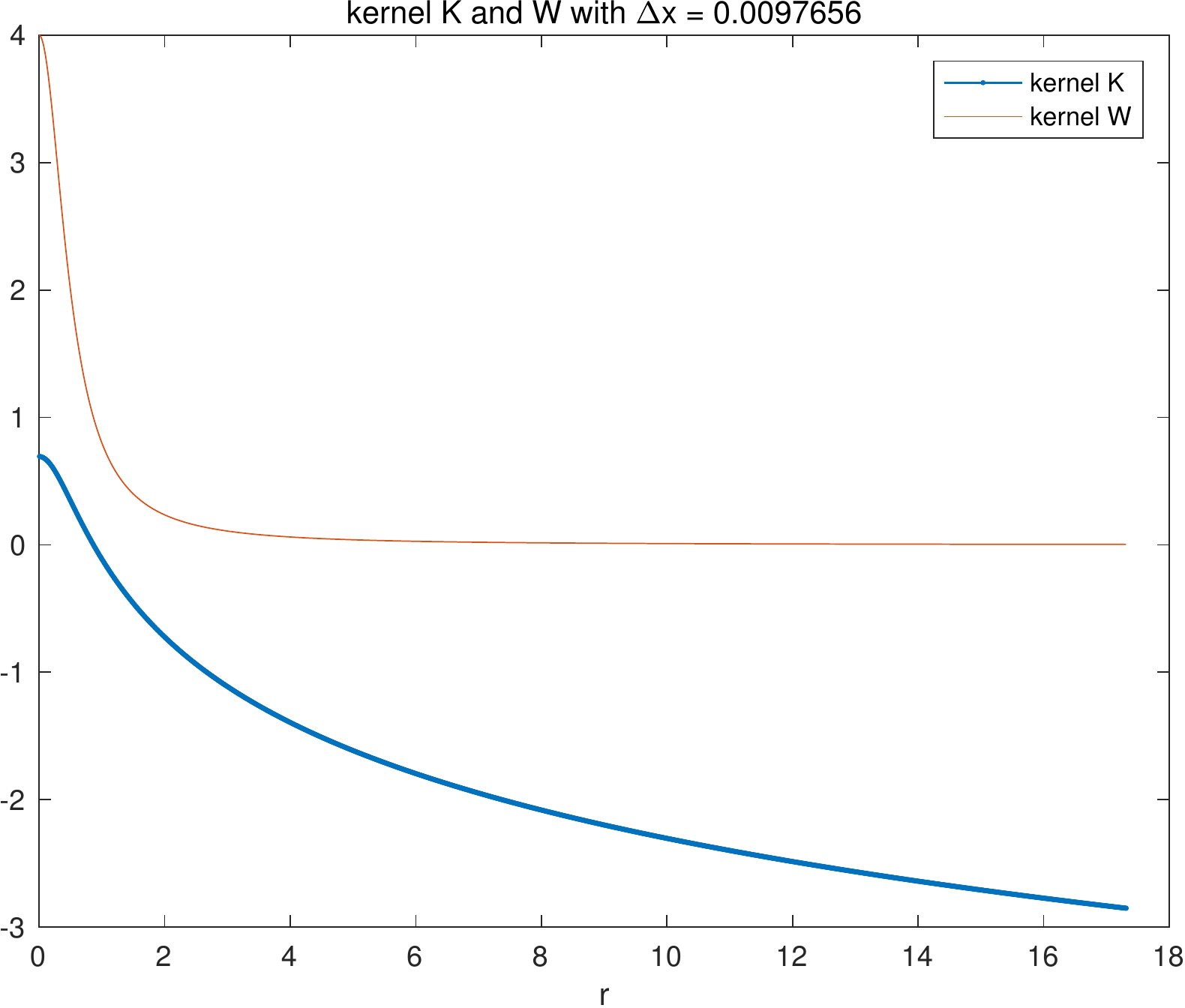}} 
	\subfigure[]{ 
		\centering
		\includegraphics[width=0.45\textwidth]{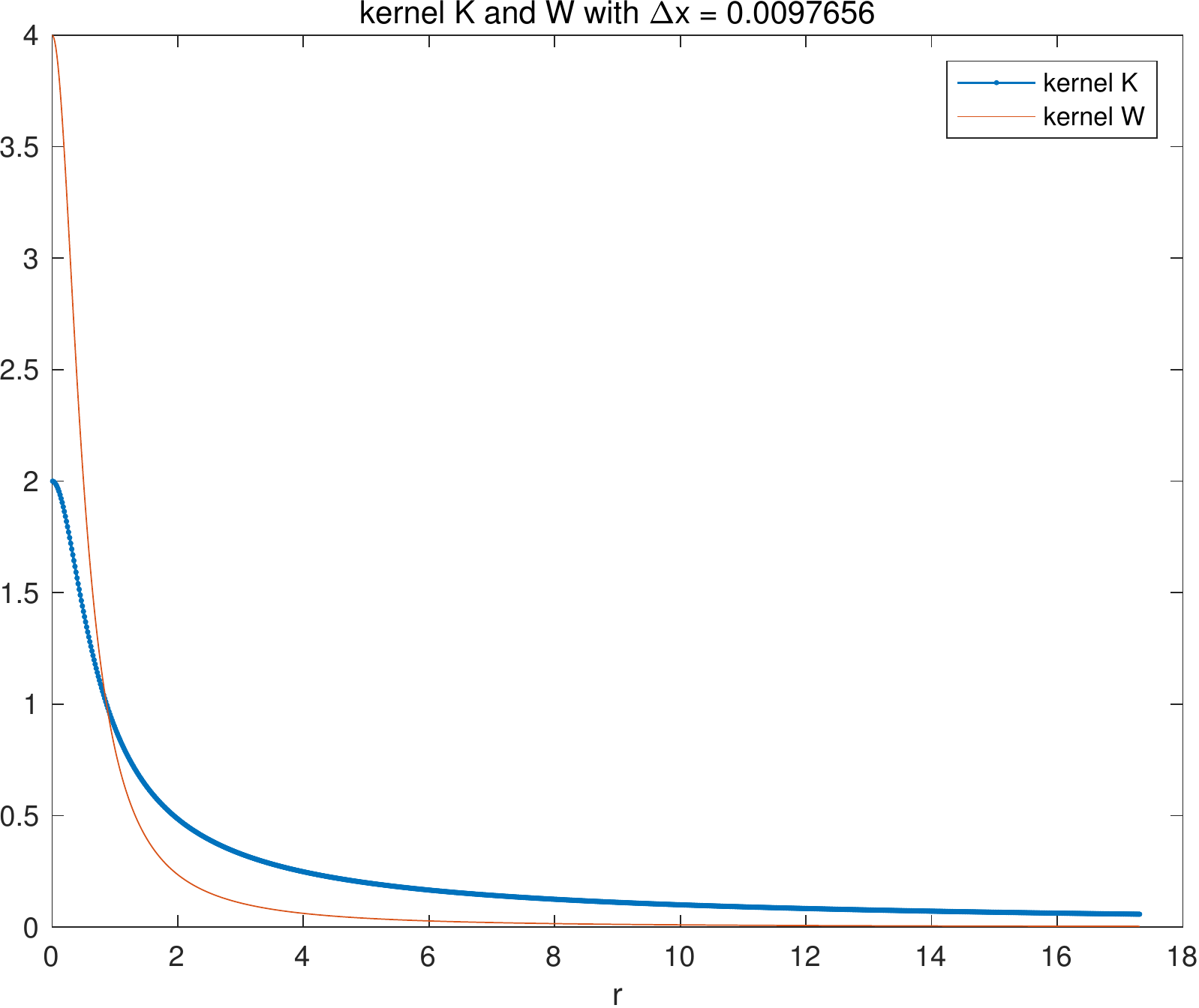} 
	}
	\caption{Comparison of Kernel Functions: (a): The two-dimensional kernel functions $\mathcal{K}(r)$ and $\mathcal{W}(r)$ with the mesh size $\Delta x$ being 0.0097656. (b): The three-dimensional kernel functions $\mathcal{K}(r)$ and $\mathcal{W}(r)$ with the mesh size $\Delta x$ being 0.0097656.}
	\label{plotkernel}
\end{figure}

\subsection{Convergence Test}
Consider the equations (\ref{model3}) for complex ionic fluids in one-dimension with the kernel $\mathcal{W}(x) = \frac{1}{x^2 + \epsilon^2}, \epsilon = \frac{1}{2}$, $\mathcal{K}(x) = \exp(-|x|)$, $V_{\text{ext}}(x) = \frac{1}{2} x^2$. Note that, the electrostatic kernel $\mathcal{K}(x)$ in one-dimension is not physically relevant, and thus this numerical example is a toy model, which only serves the purpose of the  convergence test. The initial conditions (\ref{model4}) with which the equations (\ref{model3}) equipped are given by
\begin{equation*}
\left\{
\begin{array}{lll}
c_1(x, 0) = \dfrac{1}{\sqrt{2 \pi}} \exp \left(-\dfrac{(x - 2)^2}{2} \right) &\text{with} &z_1 = 1, \\
c_2(x, 0) = \dfrac{1}{\sqrt{2 \pi}} \exp \left(-\dfrac{(x + 2)^2}{2} \right) &\text{with} &z_2 = -1.
\end{array}
\right.
\end{equation*}
Here, take the computation domain as $[-2L, 2L], \ L = 10$, then the results of the convergence of error in $l^{\infty}, \ l^1$ and $l^2$ norms at time $t = 1$ is shown in Figure \ref{im1} where we take the uniform mesh size $\Delta x$ be $1.2500,    0.6250,    0.3125,    0.15625,    0.078125,    0.0390625,
0.01953125(N$ be $2^5, 2^6, 2^7, 2^8, 2^9, 2^{10}, 2^{11})$, $\Delta t$ is determined by (\ref{cfl}), here $\Delta t = {\Delta x}/{(2 ~U_{\text{max}})}$. And we omit it in other one-dimensional examples. Meanwhile, we define the errors of numerical solutions
\begin{equation}
\|\boldsymbol{e}\|_{l^{\infty}} := \max_{m, j}|c_{m,j} - c^{\text{ref}}_{m,j}|, ~~ \|\boldsymbol{e}\|_{l^p} := \left( \sum_{m = 1}^{M} \Delta x \sum_{j} |c_{m,j} - c^{\text{ref}}_{m,j}|^p \right)^{\frac{1}{p}}, p = 1, 2.
\end{equation}
Here $c^{\text{ref}}_{m,j}$ is the reference solution of the $m$-th species on mesh with mesh size $\Delta x = 0.0048828125\ (N = 2^{13})$. The first order convergence in space can be observed in Figure \ref{im1}.

\begin{figure}[htp] 
	\centering 
	\includegraphics[width=.6\textwidth]{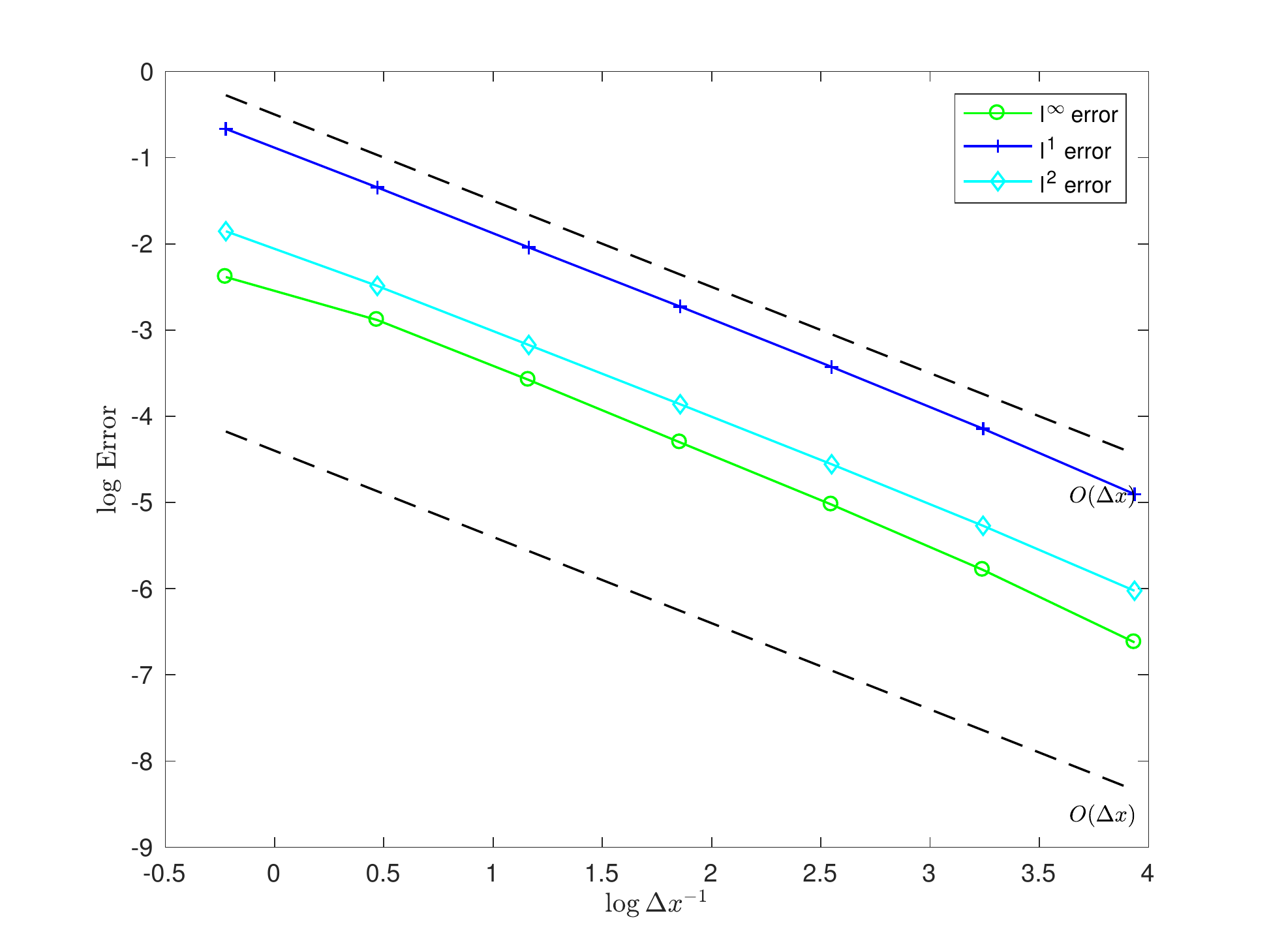} 
	\caption{Convergence Test: The loglog plot of errors with the mesh size $\Delta x$ being $1.2500,    0.6250,    0.3125,    0.15625,    0.078125,    0.0390625,
		0.01953125$ at time $t = 1$} 
	\label{im1} 
\end{figure}

\subsection{Multiple Species in One-dimension}
\subsubsection{Steady State} In this part, we study the steady state of a one-dimensional example. Consider the equations (\ref{model3}) in one-dimension with the kernel $\mathcal{W}(x) = \frac{\eta}{x^2 + \epsilon^2}, \epsilon = \frac{1}{10}$, $\mathcal{K}(x) = \exp(-|x|)$, $V_{\text{ext}}(x) = \frac{1}{2} x^2$ and the initial conditions (\ref{model4}) are given by the following form
\begin{equation}
\label{iniex1}
\left\{
\begin{array}{lll}
c_1(x, 0) = \dfrac{1}{2 \sqrt{\pi}} \exp \left(-\dfrac{(x - 2)^2}{2} \right) &\text{with} &z_1 = 1, \\
c_2(x, 0) = \dfrac{1}{\sqrt{2 \pi}} \exp \left(-\dfrac{(x + 2)^2}{2} \right) &\text{with} &z_2 = -1.
\end{array}
\right.
\end{equation}
In this part, we take the parameter $\eta = 1$, the computation domain as $[-2L, 2L], \ L = 10$ and the uniform mesh size $\Delta x = 0.01953125\ (N = 2^{11})$. 
The results with which we are concerned are on the domain $[-L, L]$. Then Figure \ref{3c} shows the transport of the ionic species: the concentrations of the positive ions and the negative ions move towards each other due to the electrostatic attraction with time $t$ and the concentrations converge to the equilibrium.
\begin{figure}[htp] 
	\centering
	\subfigure{
		\begin{minipage}[t]{0.33\linewidth}
			\centering
			\includegraphics[width=1.0\linewidth]{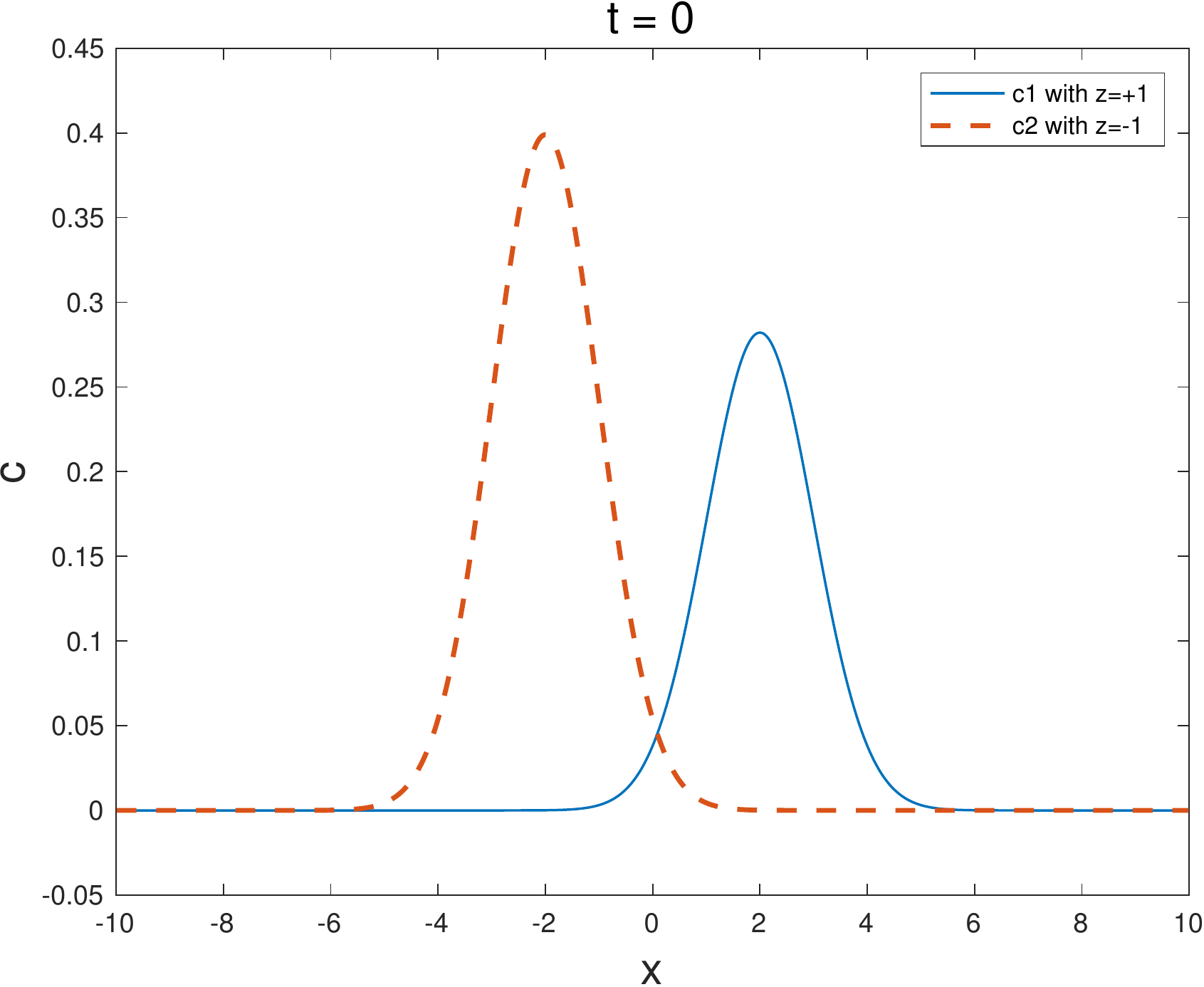}
		\end{minipage}
		\begin{minipage}[t]{0.33\linewidth}
			\centering
			\includegraphics[width=1.0\linewidth]{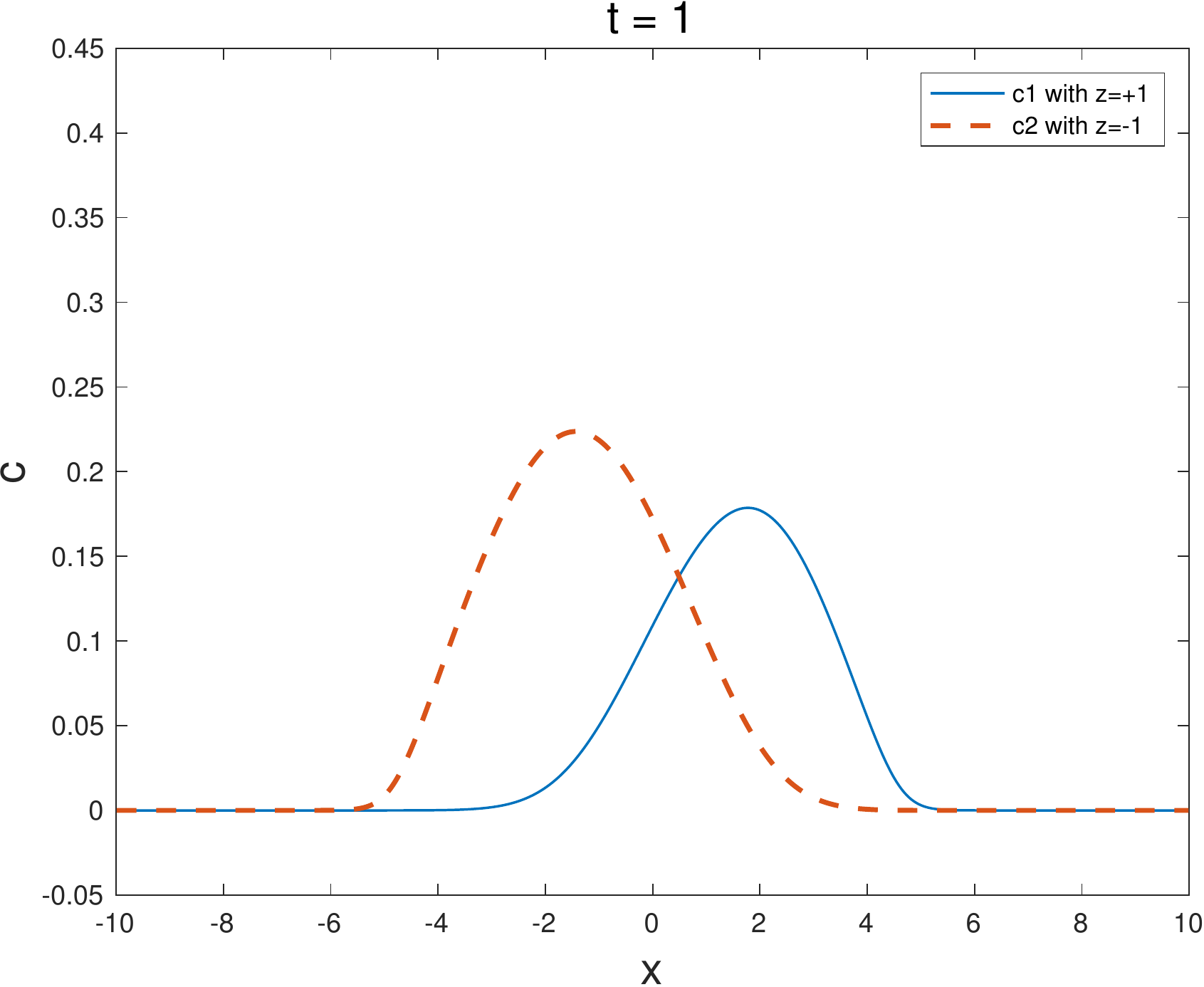}
		\end{minipage}
		\begin{minipage}[t]{0.33\linewidth}
			\centering
			\includegraphics[width=1.0\linewidth]{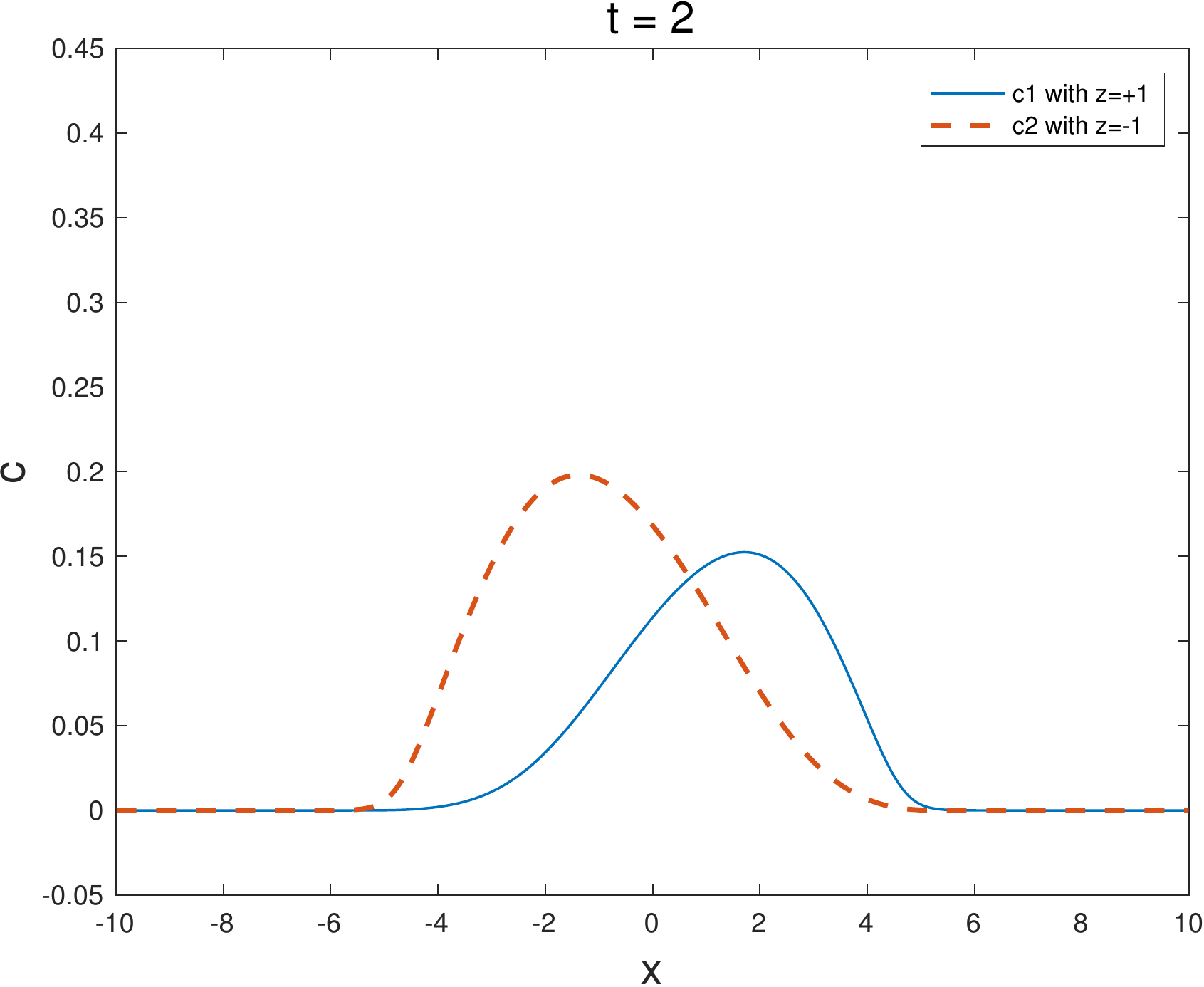}
		\end{minipage}
	}
	\subfigure{
		\begin{minipage}[t]{0.33\linewidth}
			\centering
			\includegraphics[width=1.0\linewidth]{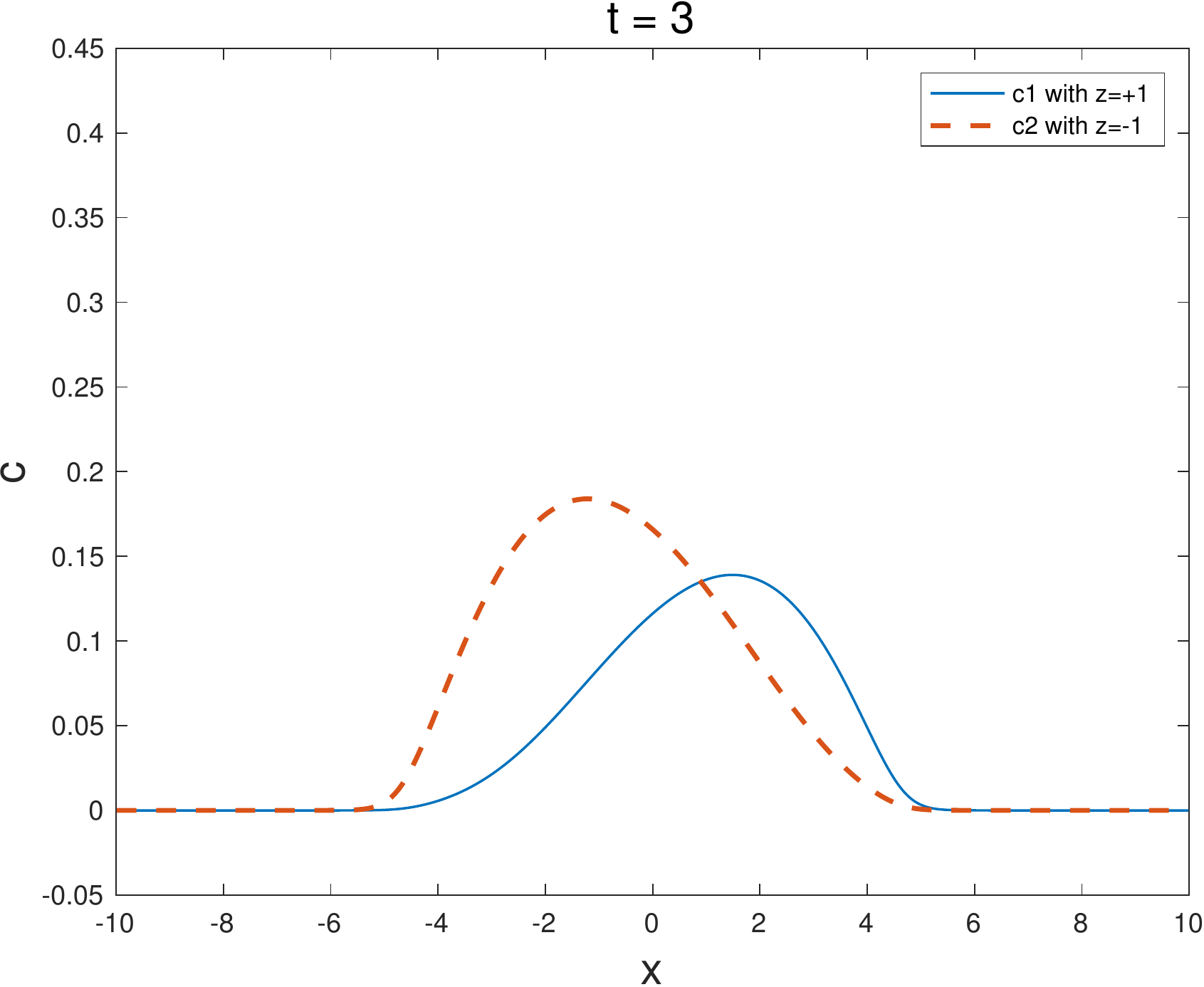}
		\end{minipage}
		\begin{minipage}[t]{0.33\linewidth}
			\centering
			\includegraphics[width=1.0\linewidth]{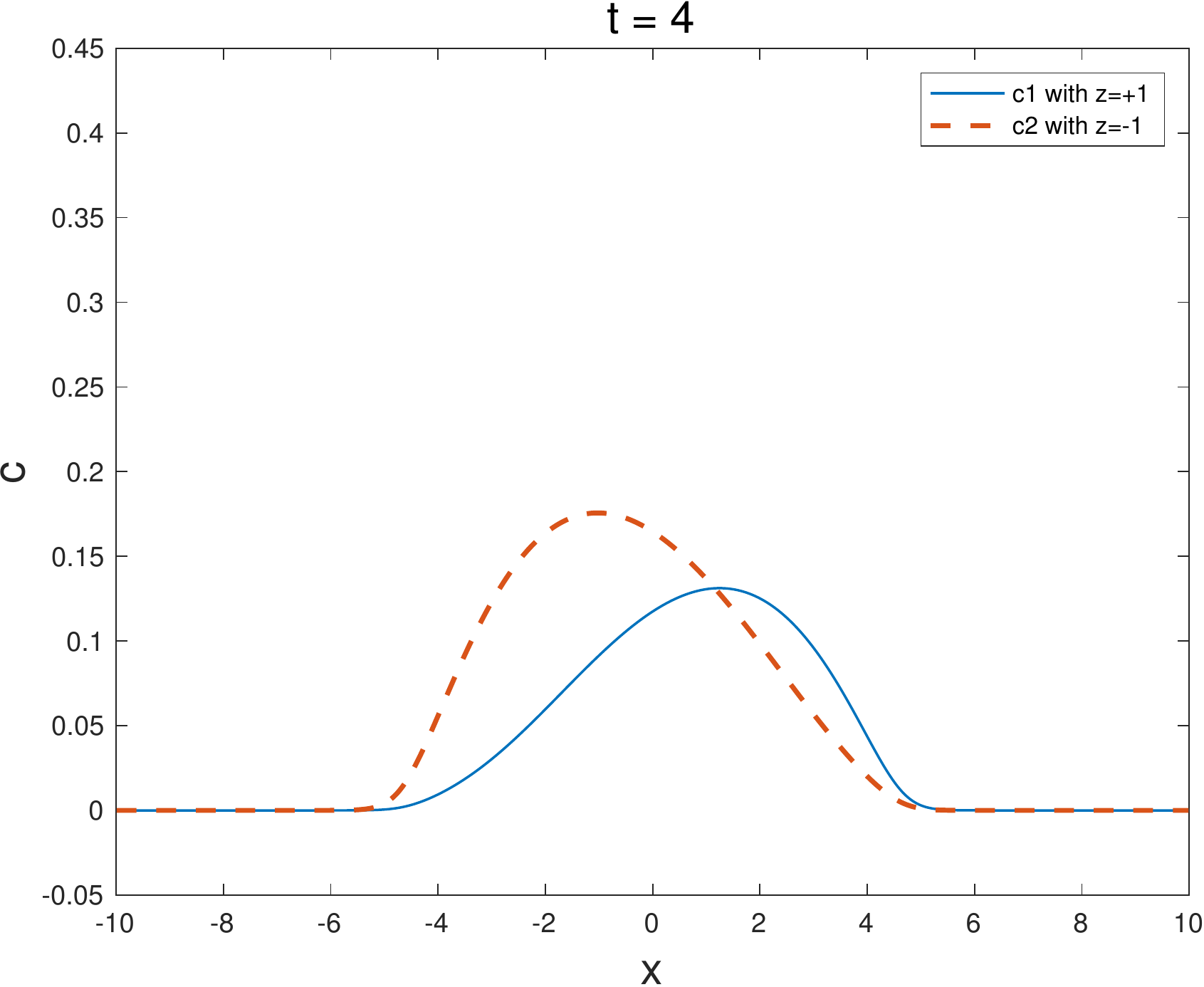}
		\end{minipage}
		\begin{minipage}[t]{0.33\linewidth}
			\centering
			\includegraphics[width=1.0\linewidth]{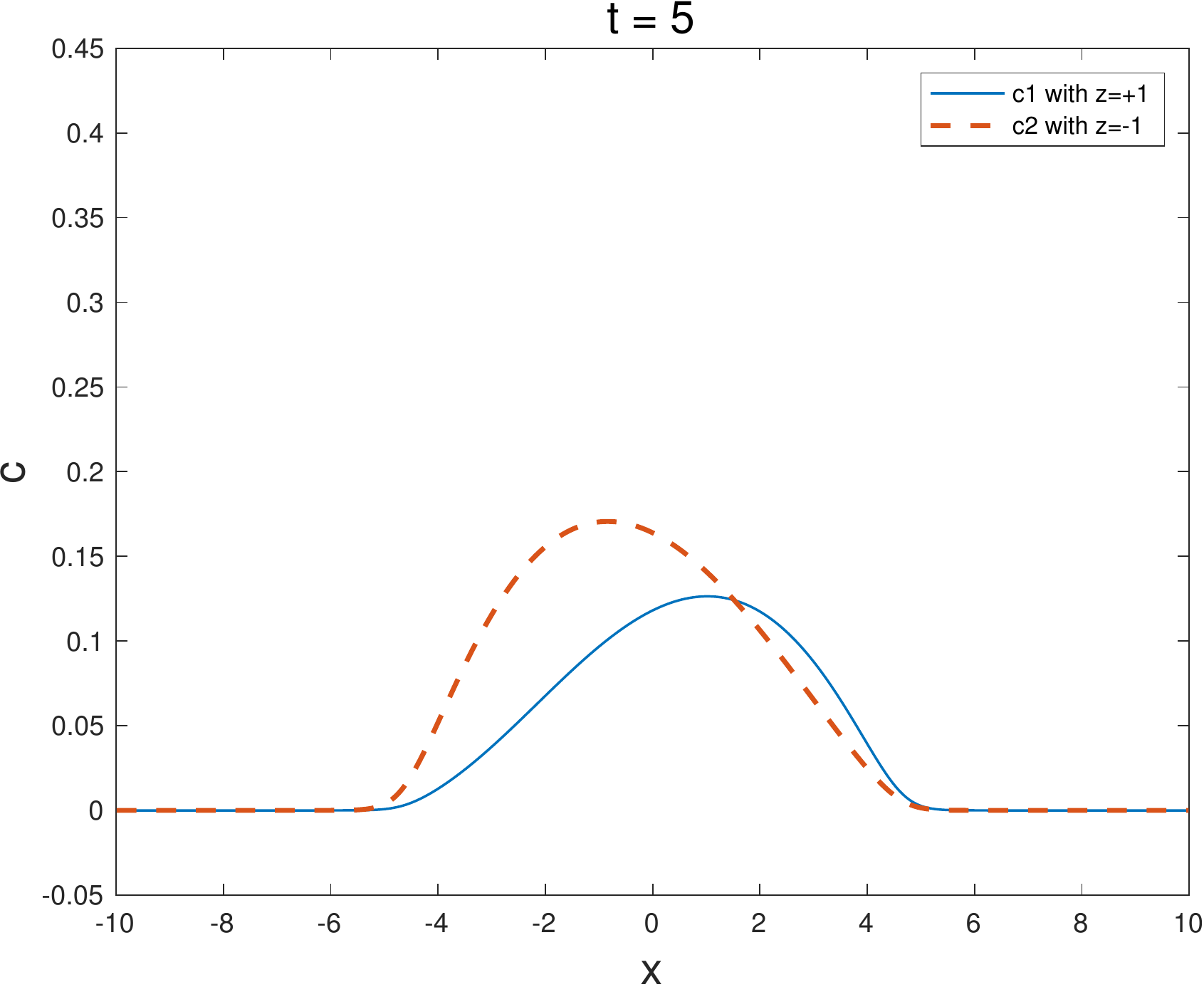}
		\end{minipage}
	}
	\subfigure{
		\begin{minipage}[t]{0.33\linewidth}
			\centering
			\includegraphics[width=1.0\linewidth]{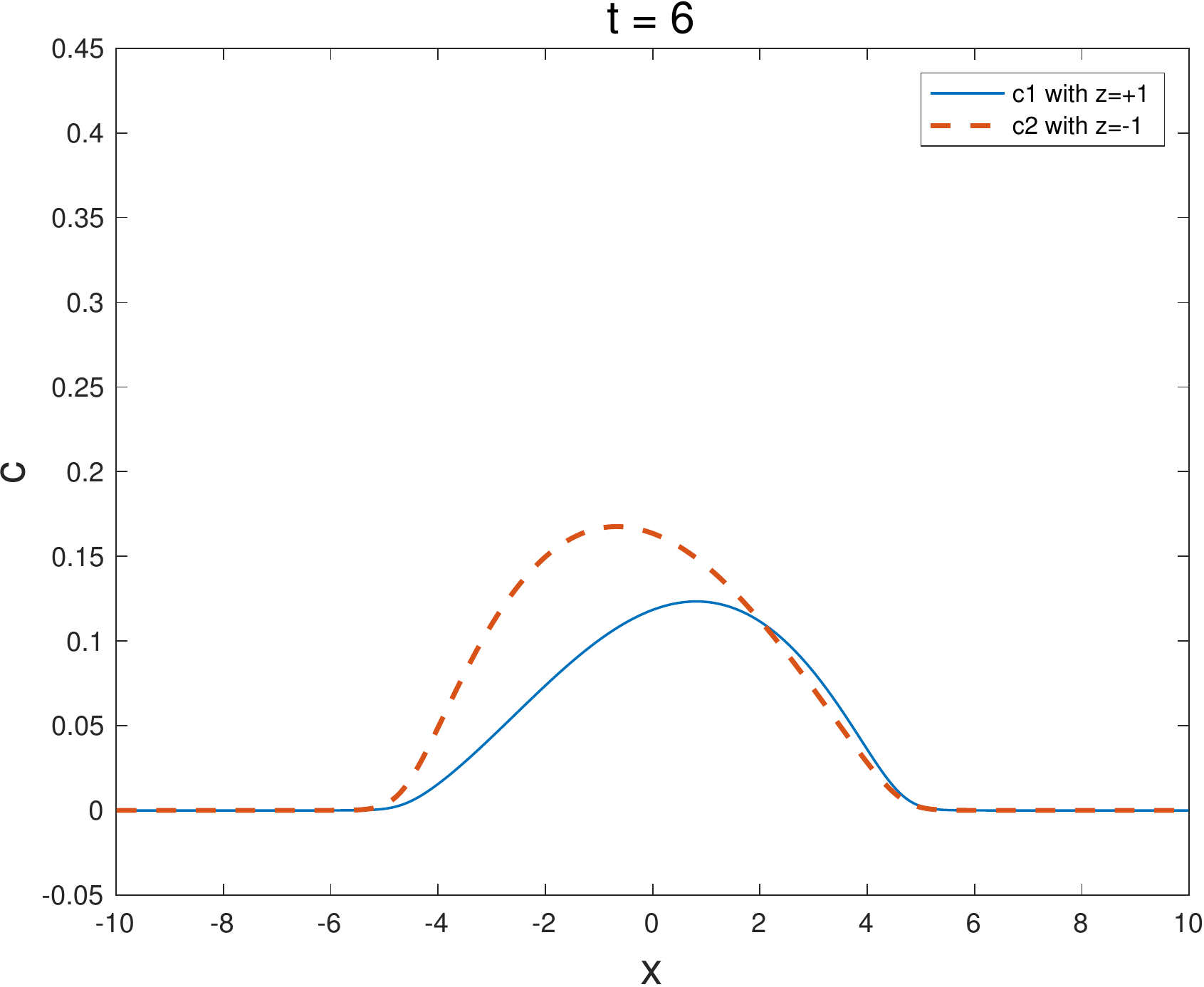}
		\end{minipage}
		\begin{minipage}[t]{0.33\linewidth}
			\centering
			\includegraphics[width=1.0\linewidth]{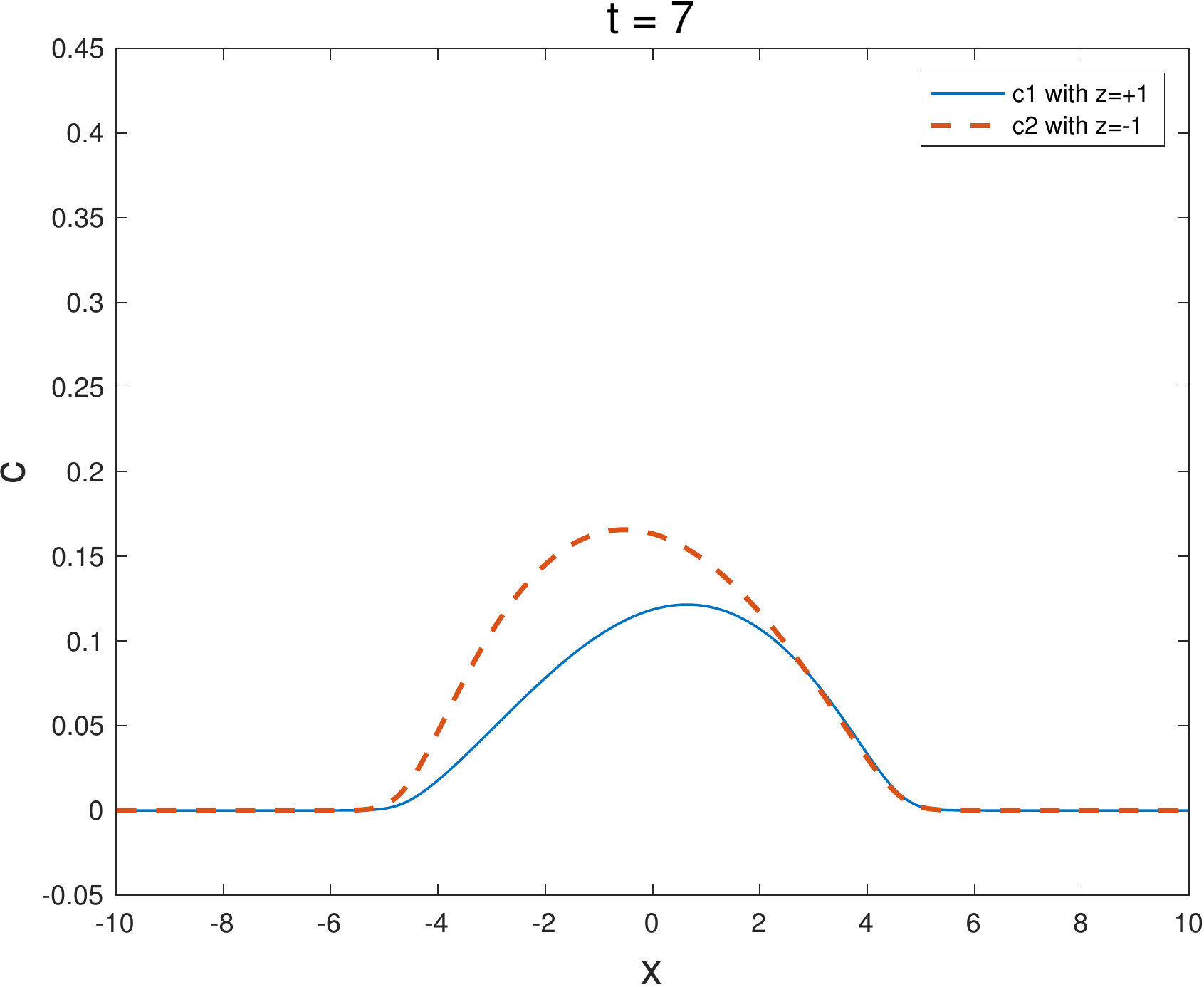}
		\end{minipage}
		\begin{minipage}[t]{0.33\linewidth}
			\centering
			\includegraphics[width=1.0\linewidth]{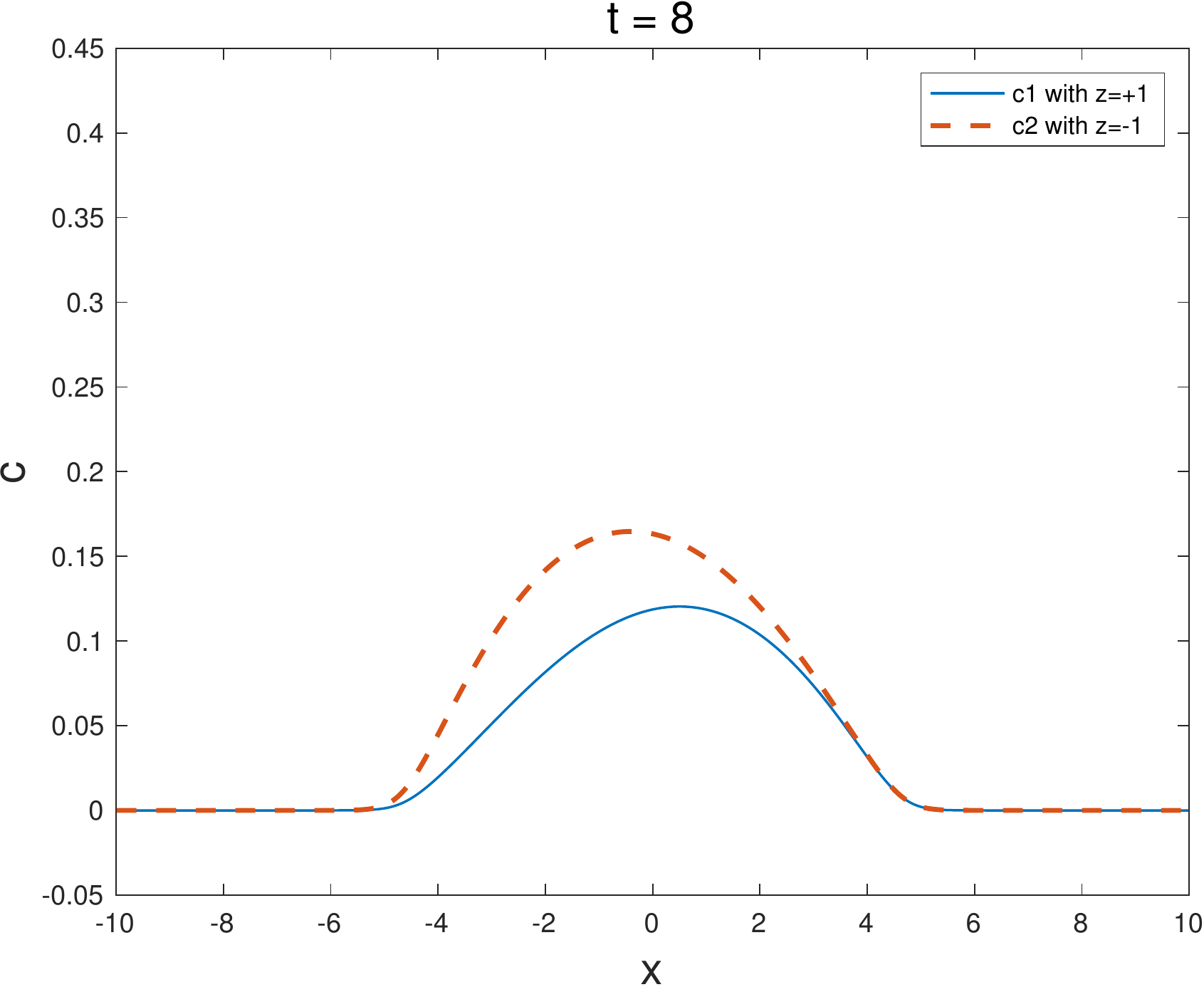}
		\end{minipage}
	}
	\subfigure{
		\begin{minipage}[t]{0.33\linewidth}
			\centering
			\includegraphics[width=1.0\linewidth]{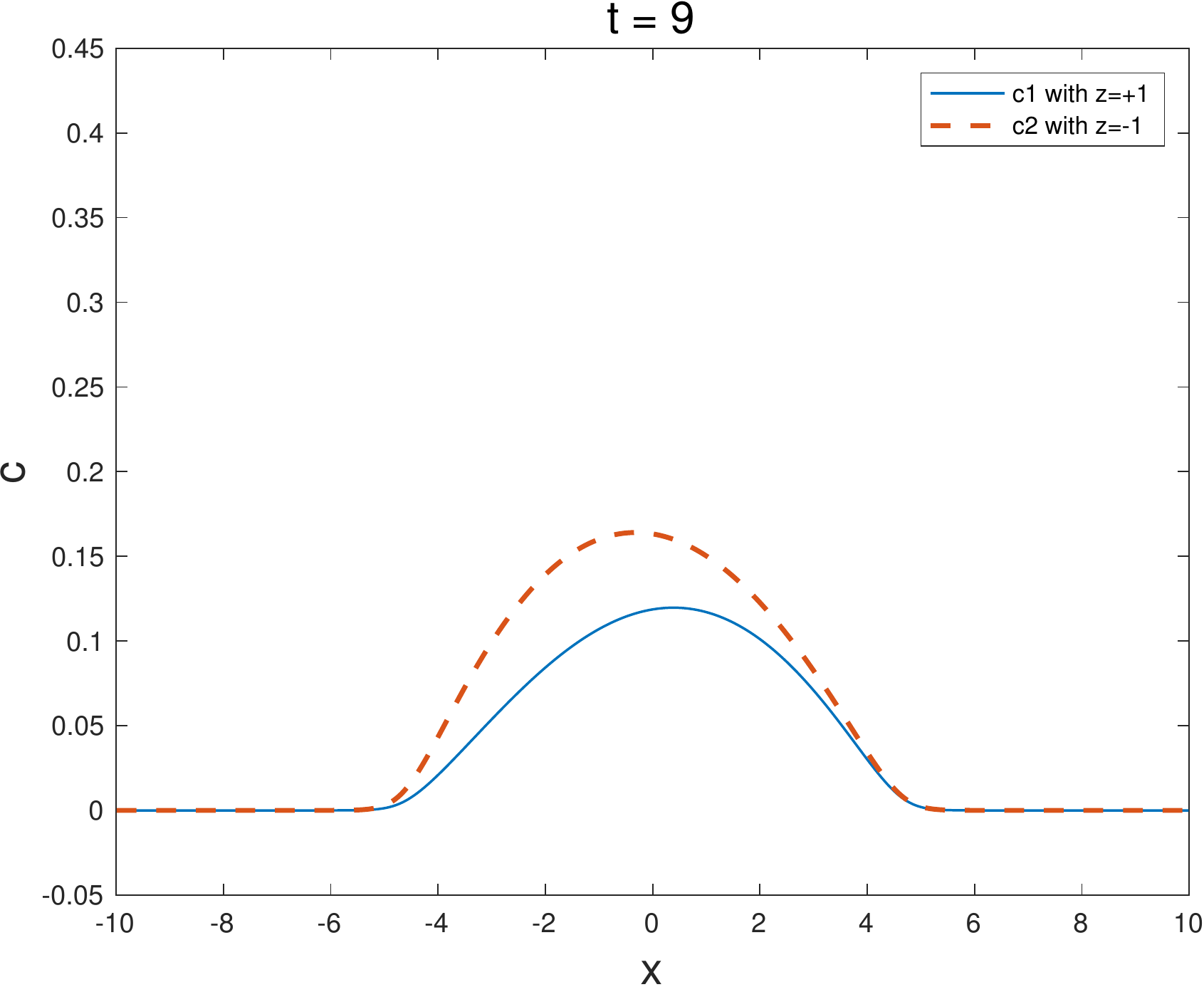}
		\end{minipage}
		\begin{minipage}[t]{0.33\linewidth}
			\centering
			\includegraphics[width=1.0\linewidth]{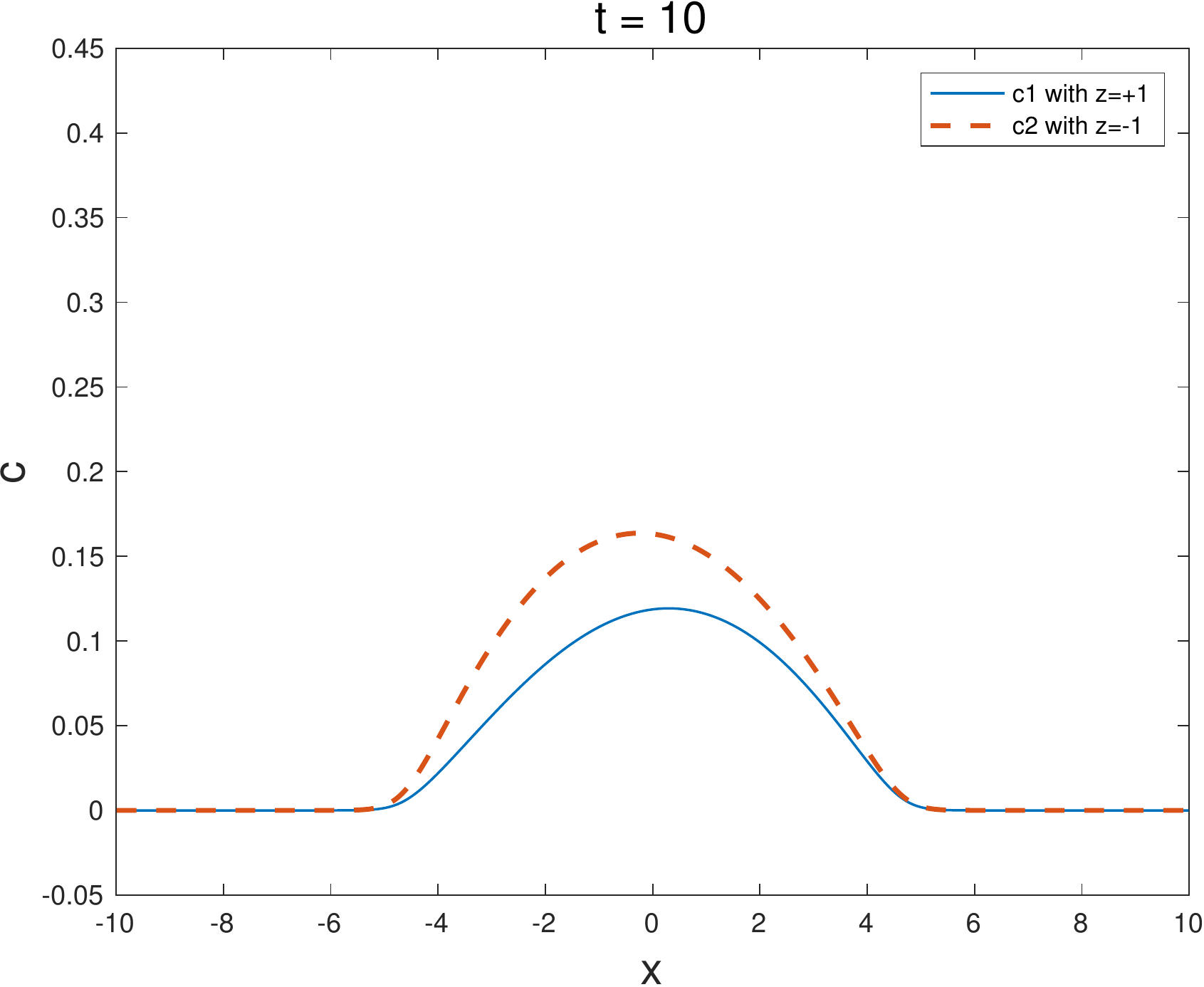}
		\end{minipage}
		\begin{minipage}[t]{0.33\linewidth}
			\centering
			\includegraphics[width=1.0\linewidth]{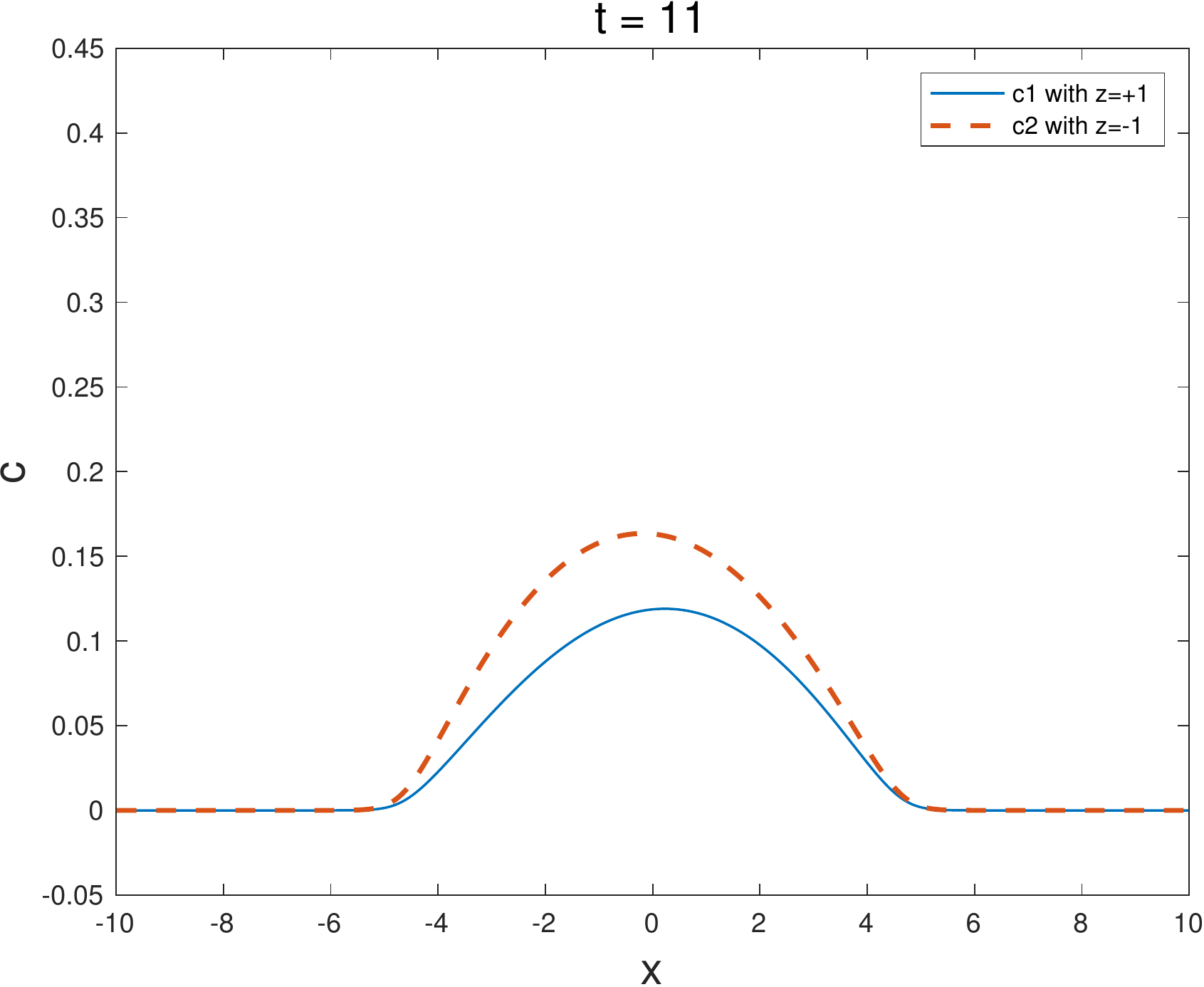}
		\end{minipage}
	}
	\subfigure{
		\begin{minipage}[t]{0.33\linewidth}
			\centering
			\includegraphics[width=1.0\linewidth]{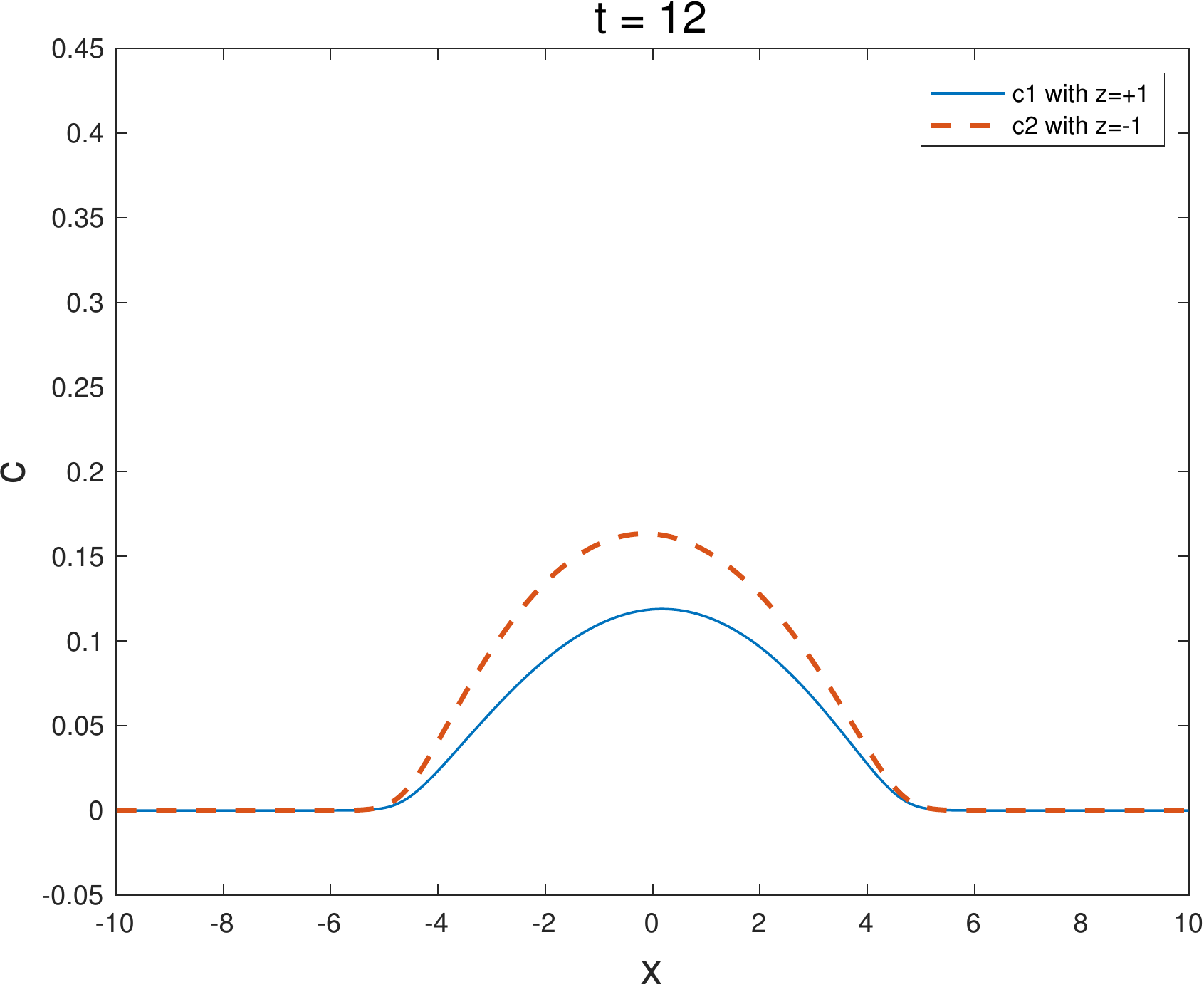}
		\end{minipage}
		\begin{minipage}[t]{0.33\linewidth}
			\centering
			\includegraphics[width=1.0\linewidth]{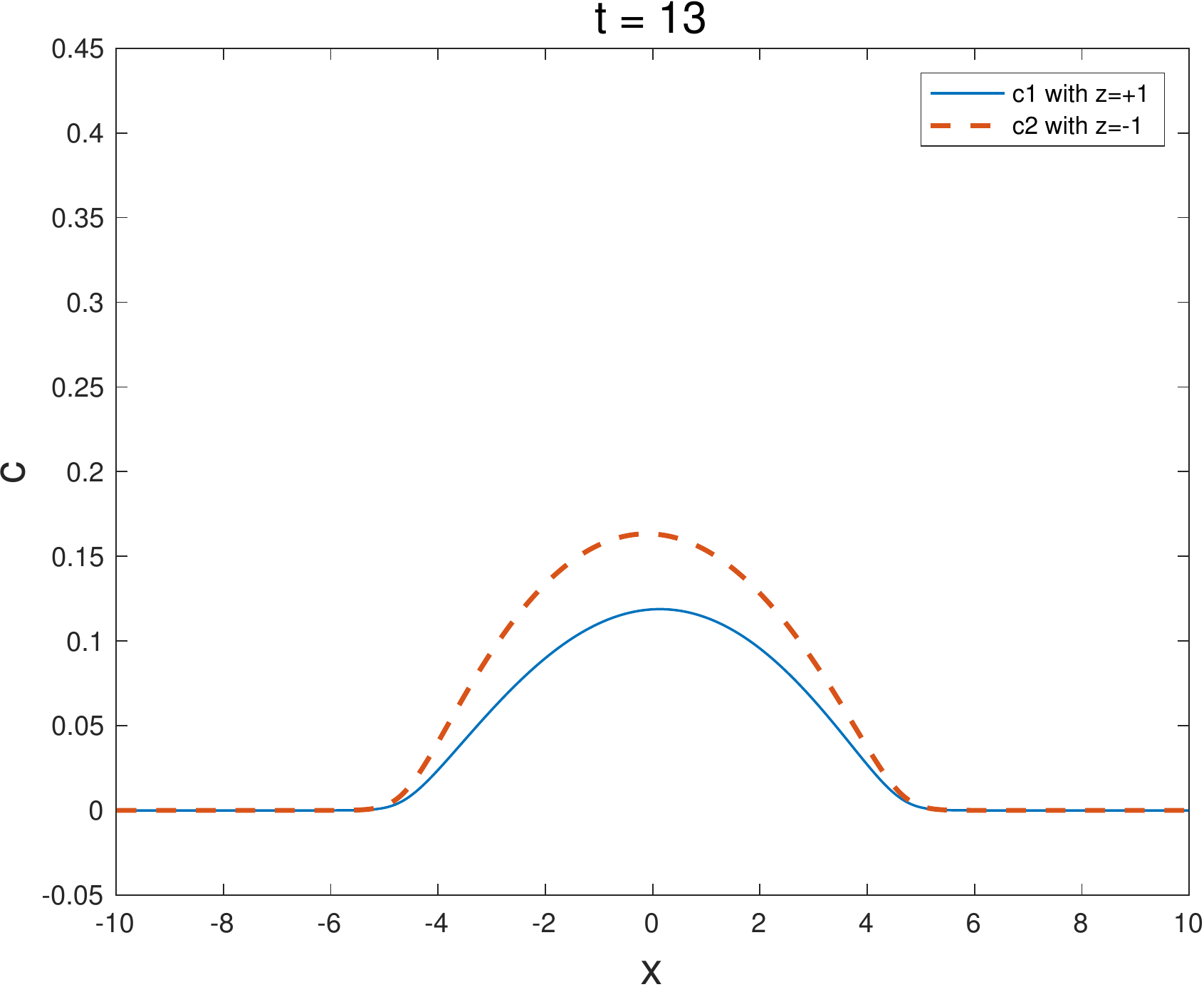}
		\end{minipage}
		\begin{minipage}[t]{0.33\linewidth}
			\centering
			\includegraphics[width=1.0\linewidth]{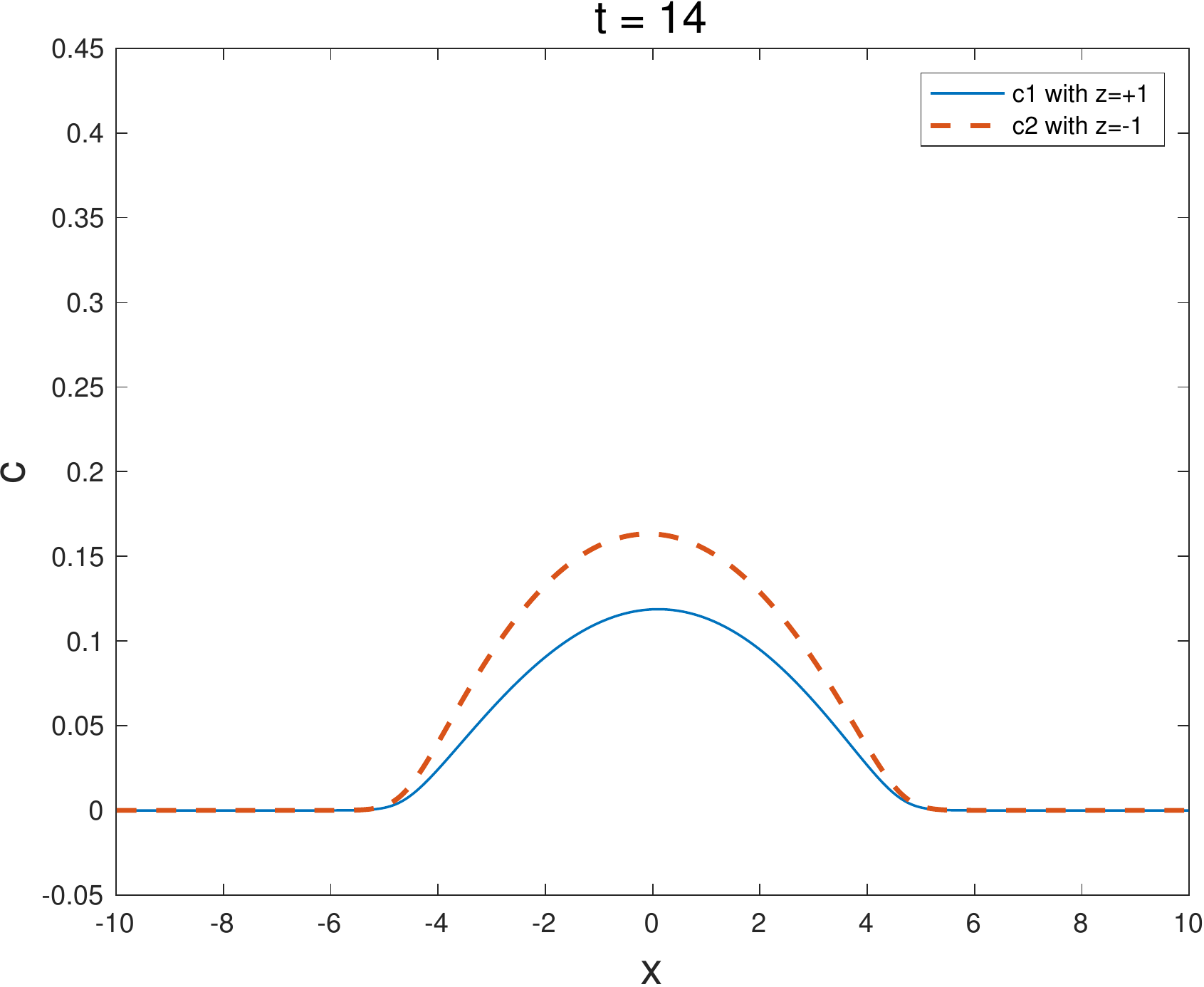}
		\end{minipage}
	}
	\caption{Multiple Species in One-dimension: The space-concentration curves with the mesh size $\Delta x$ being 0.01953125 and the time $t$ changing from 0 to 14}
	\label{3c}
\end{figure}
According to the fact that every individual part of the free energy $\mathcal{F}$ has its own physical effect, we can define the internal energy $\mathcal{F}_1$, the field energy $\mathcal{F}_2$, interaction energy $\mathcal{F}_3$, external field energy $\mathcal{F}_4$ of the model (\ref{model3})-(\ref{model4}) respectively as
\begin{equation}
\begin{aligned}
\mathcal{F}_1(t) &= \int_{\mathbb{R}^d} \sum_{m = 1}^{M} c_m \log c_m \,d x, \\
\mathcal{F}_2(t) &= \dfrac{1}{2} \int_{\mathbb{R}^d} \int_{\mathbb{R}^d} \mathcal{K}(x - y) \rho(x) \rho(y) \,d x \,d y, \\
\mathcal{F}_3(t) &= \dfrac{1}{2} \int_{\mathbb{R}^d} \int_{\mathbb{R}^d} \mathcal{W}(x - y) \theta (x) \theta(y) \,\mathrm{d} x \,d y, \\
\mathcal{F}_4(t) &= \sum_{m = 1}^{M} \int_{\mathbb{R}^d} V_{\text{ext}}(x) c_m \,\mathrm{d} x,
\end{aligned}
\end{equation}
then the total free energy $\mathcal{F}$ defined by (\ref{free2}) equals the sum of the energy $\mathcal{F}_1, \mathcal{F}_2, \mathcal{F}_3$ and $\mathcal{F}_4$,
\begin{equation}
\begin{aligned}
\mathcal{F}(t) = \mathcal{F}_1(t) + \mathcal{F}_2(t) + \mathcal{F}_3(t) + \mathcal{F}_4(t). 
\end{aligned}
\end{equation}

Figure \ref{im2}(a) shows how the discrete forms of the energy $\mathcal{F}_1, \mathcal{F}_2, \mathcal{F}_3, \mathcal{F}_4, \mathcal{F}$ change with time $t$ and 
Figure \ref{im2}(b) shows the chemical potential $\psi_m, \ m = 1, 2,$ at time $t = 23$. It's observed that $\psi_m$ goes to a constant while the model goes to the equilibrium for all $m$ and the discrete form of $\mathcal{F}$ decays with time $t$. The results are consistent with our conclusions in this paper.
\begin{figure}[htp] 
	\hspace*{0.05\linewidth}
	\subfigure[]{ 
		\includegraphics[width=0.45\linewidth]{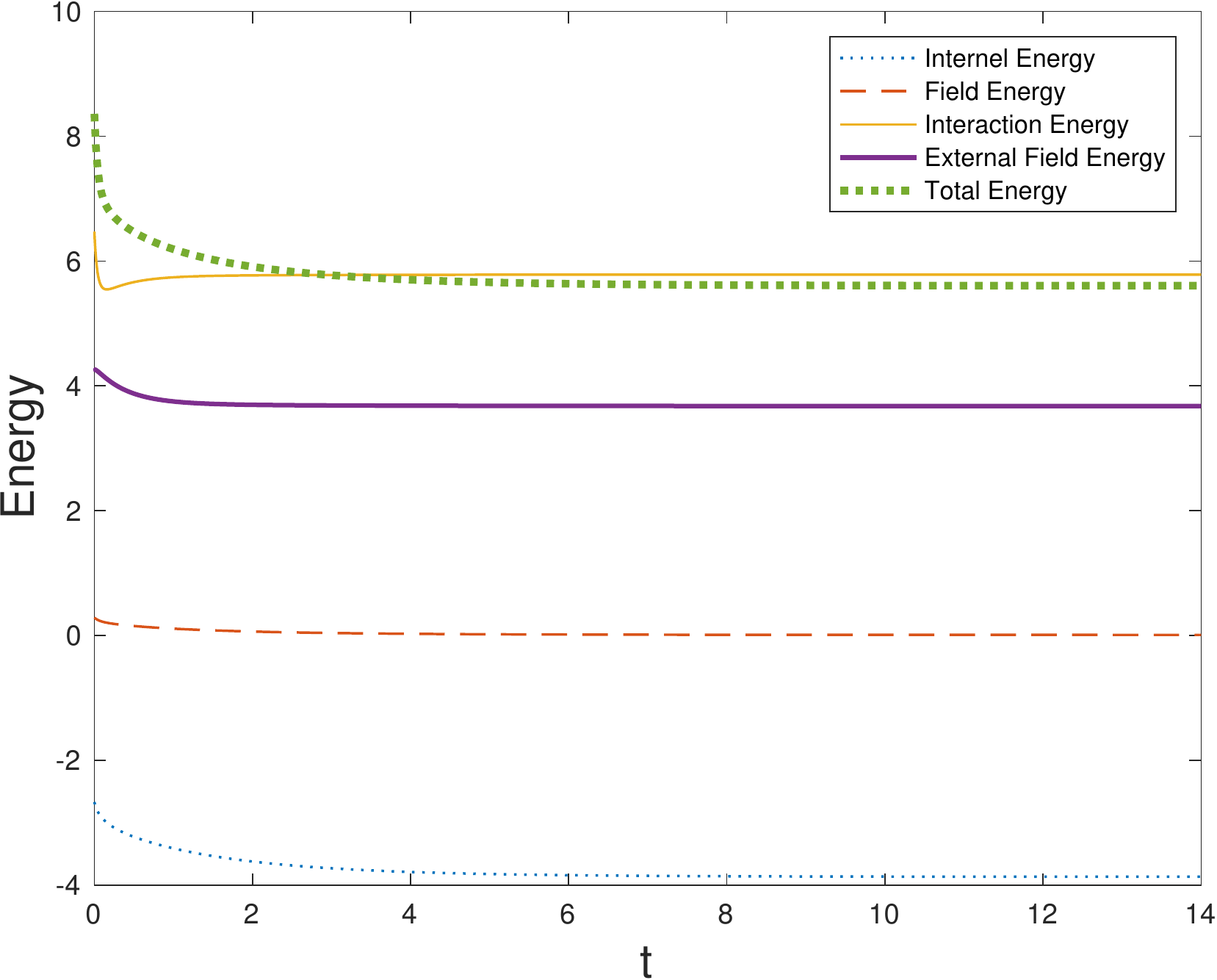}} 
	\subfigure[]{ 
		\includegraphics[width=0.45\linewidth]{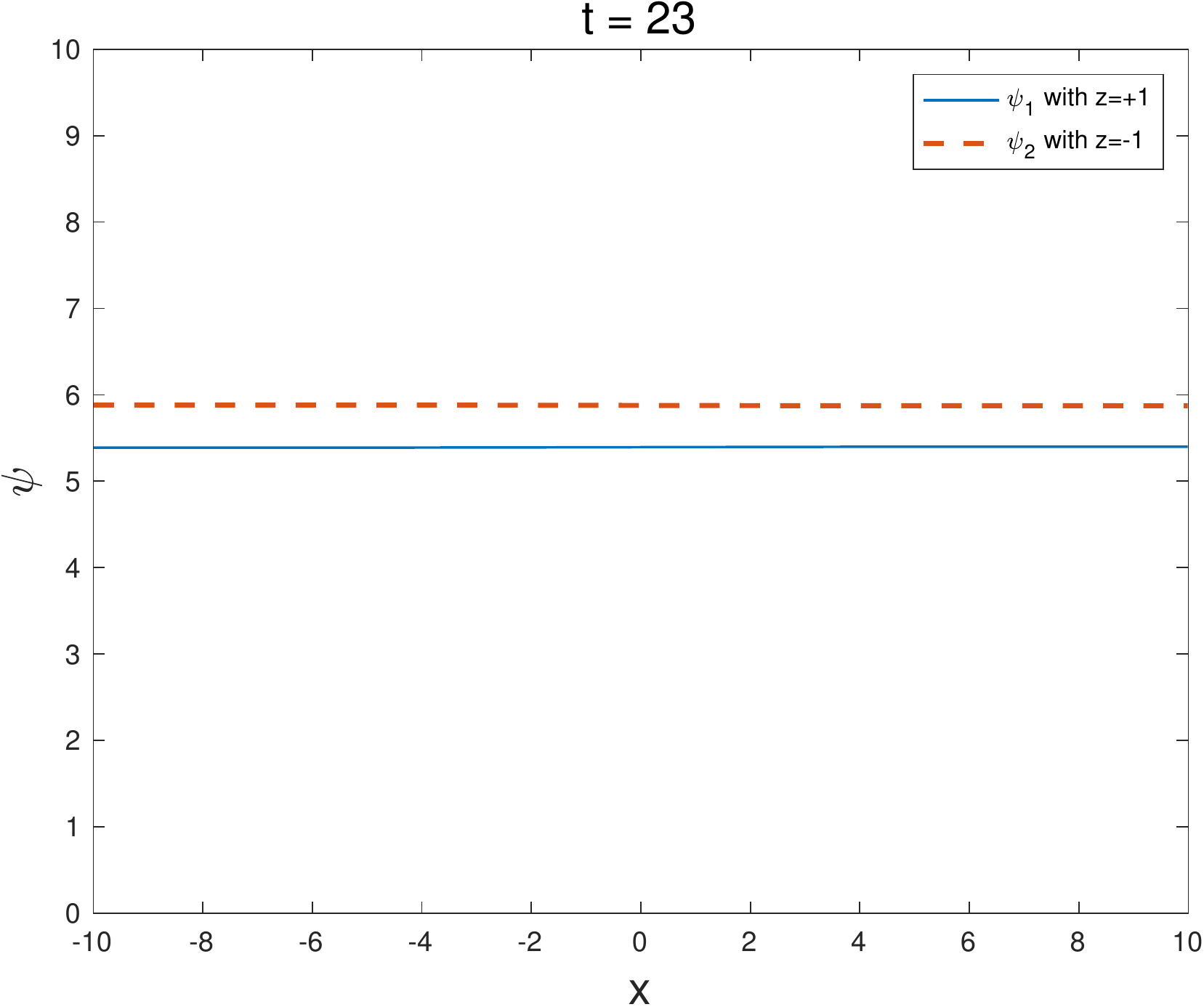} 
	}
	\caption{Multiple Species in One-dimension: (a): The time-energy plot of the model (\ref{model3}) equipped with the initial conditions (\ref{iniex1}) with the mesh size $\Delta x$ being 0.01953125. (b): Discrete chemical potential at time $t = 23$ with the mesh size $\Delta x$ being 0.01953125.}
	\label{im2}
\end{figure}

\subsubsection{Finite Size Effect} 
As we mentioned before, the kernel $\mathcal{W}(x)$ represents the steric repulsion arising from the finite size, the strength of which 
is indicated by the parameter $\eta$. The larger $\eta$ is, the stronger the nonlocal steric repulsion effect is, and thus the less peaked the concentrations of the steady state are. And $\eta = 0$ means steric repulsion vanishes. Here we aim to explore this phenomenon by different values of the parameter $\eta$.
Let $\eta = \dfrac{1}{2}, \dfrac{1}{4}, \cdots, \dfrac{1}{256}, 0$ and the mesh size $\Delta x = 0.0390625$, Figure \ref{eta} shows different steady state solutions with different values of $\eta$, here the density solutions $c_m, \ m = 1, 2,$ of time $t = 14$ approximate steady state solution, where we can find that the finite size effect makes the concentrations $c_m, \ m = 1, 2,$ not overly peaked.

\begin{figure}[htp] 
	\subfigure{
		\begin{minipage}[t]{0.33\linewidth}
			\centering
			\includegraphics[width=1.0\linewidth]{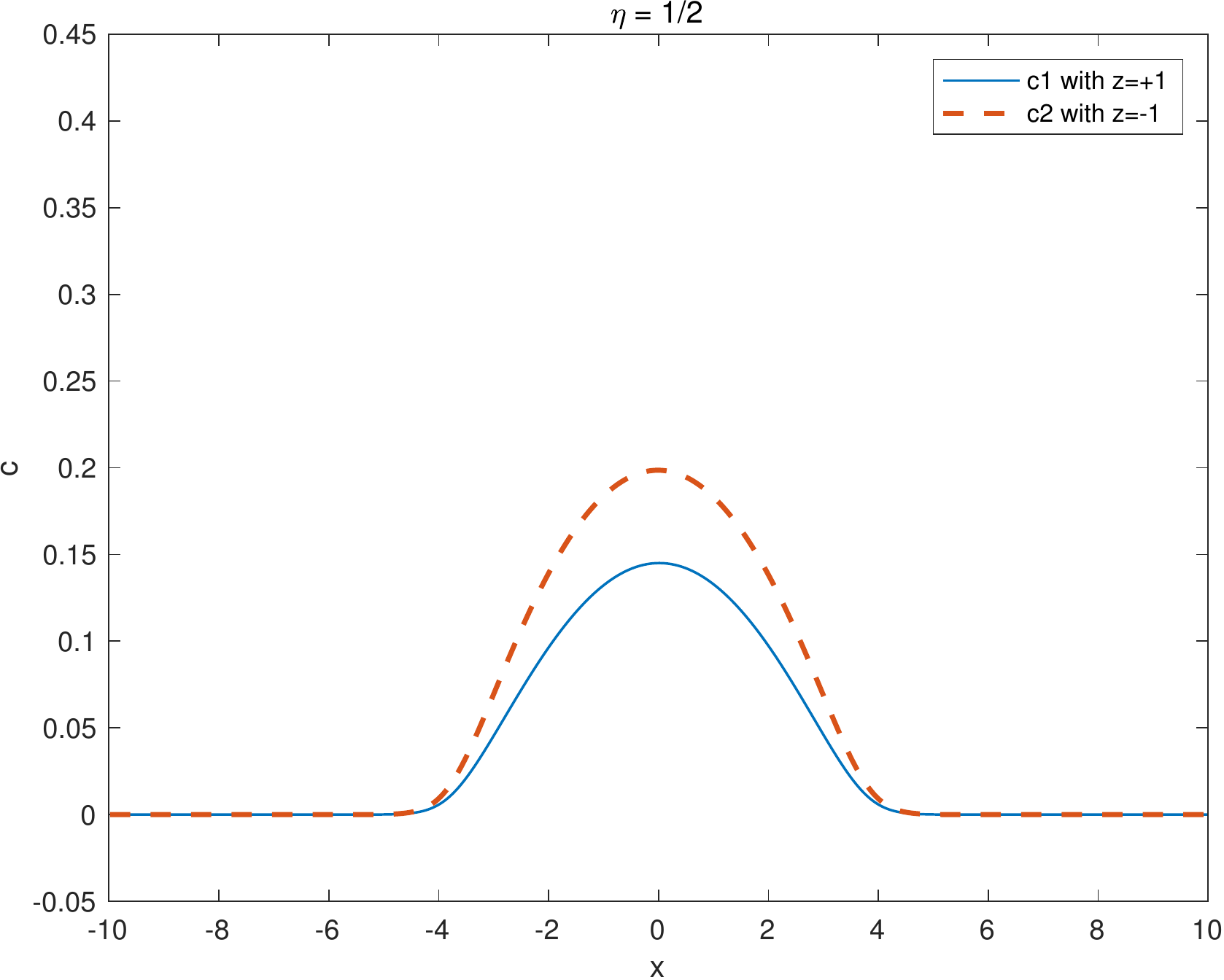}
		\end{minipage}
		\begin{minipage}[t]{0.33\linewidth}
			\centering
			\includegraphics[width=1.0\linewidth]{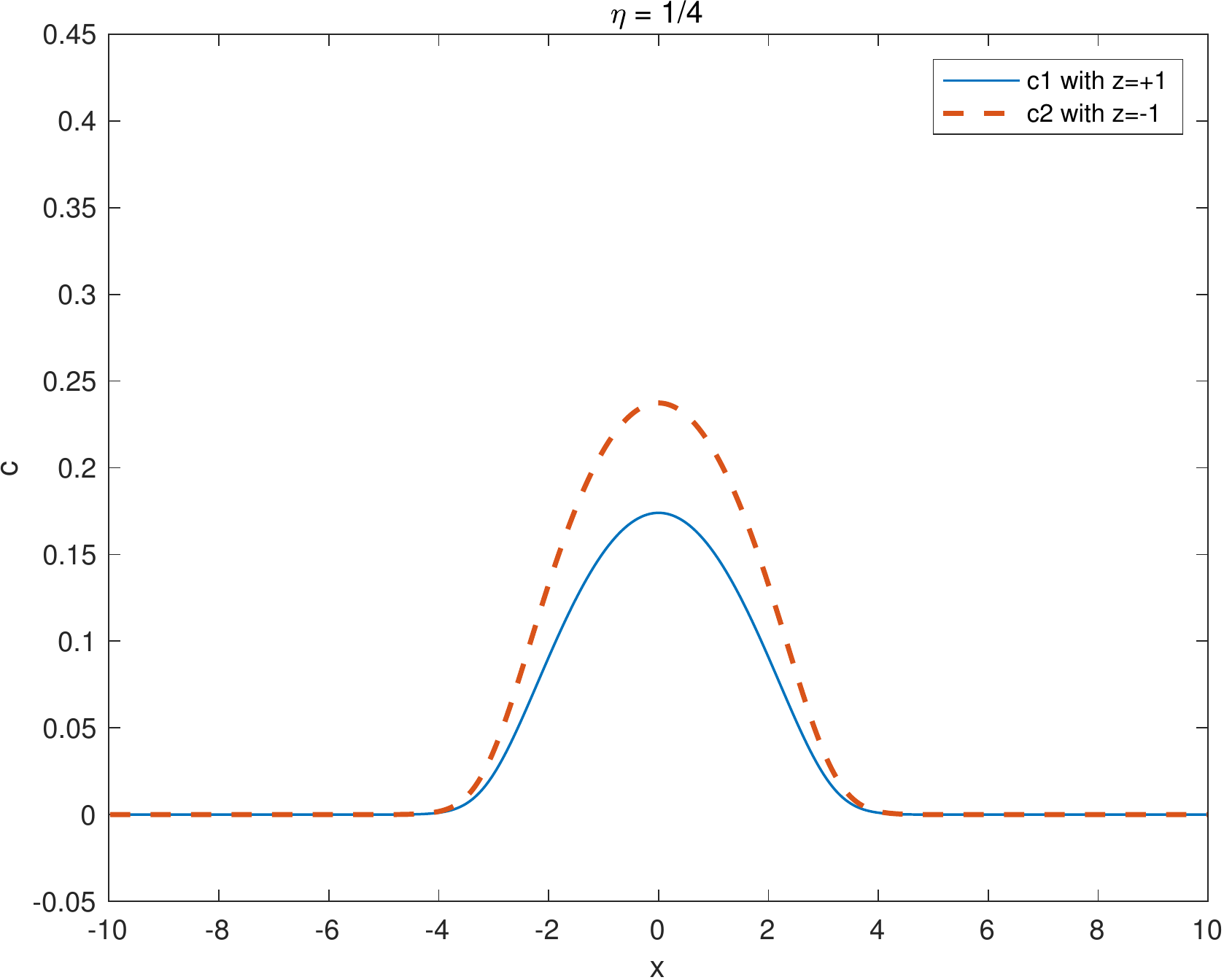}
		\end{minipage}
		\begin{minipage}[t]{0.33\linewidth}
			\centering
			\includegraphics[width=1.0\linewidth]{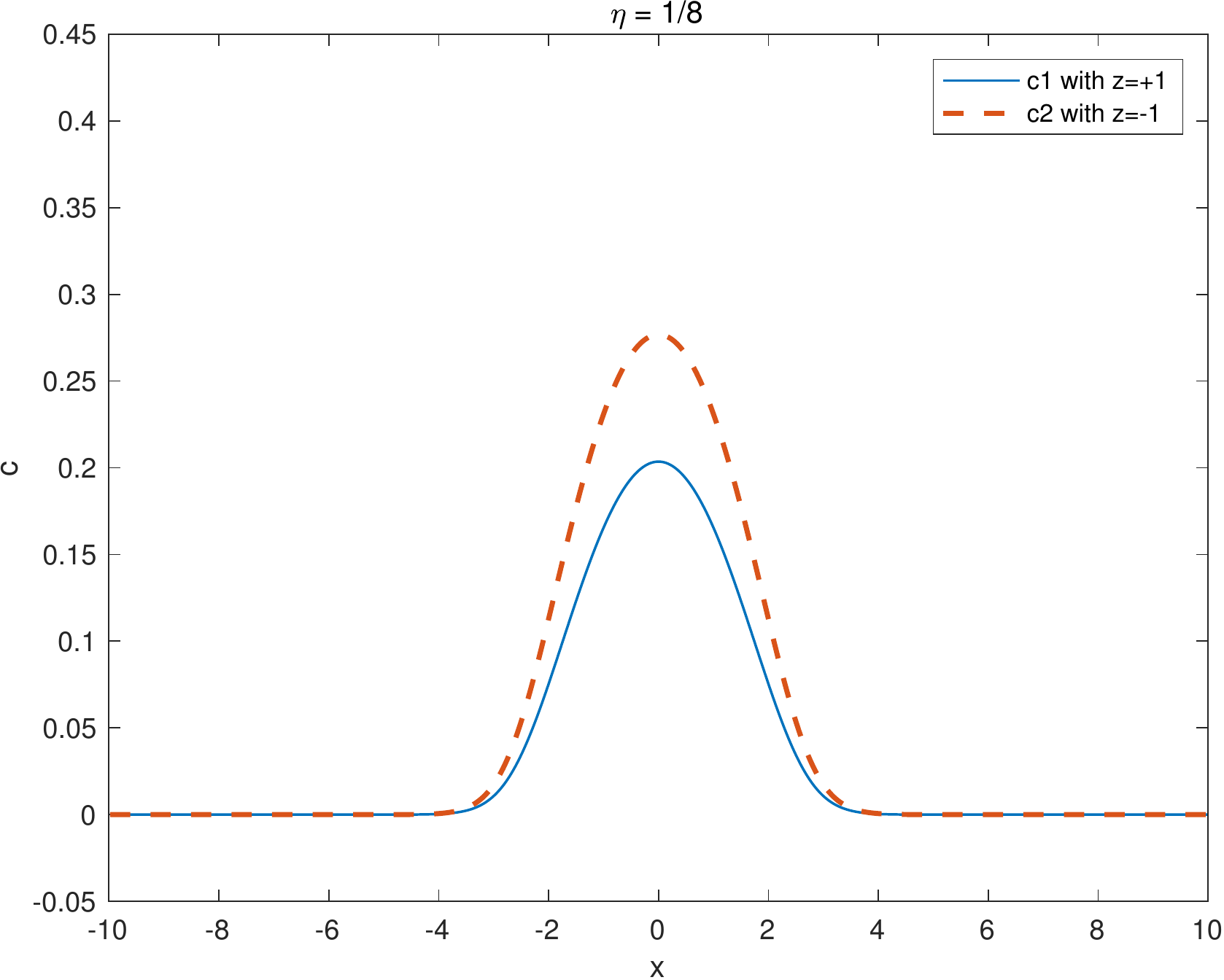}
		\end{minipage}
	}
	\subfigure{
		\begin{minipage}[t]{0.33\linewidth}
			\centering
			\includegraphics[width=1.0\linewidth]{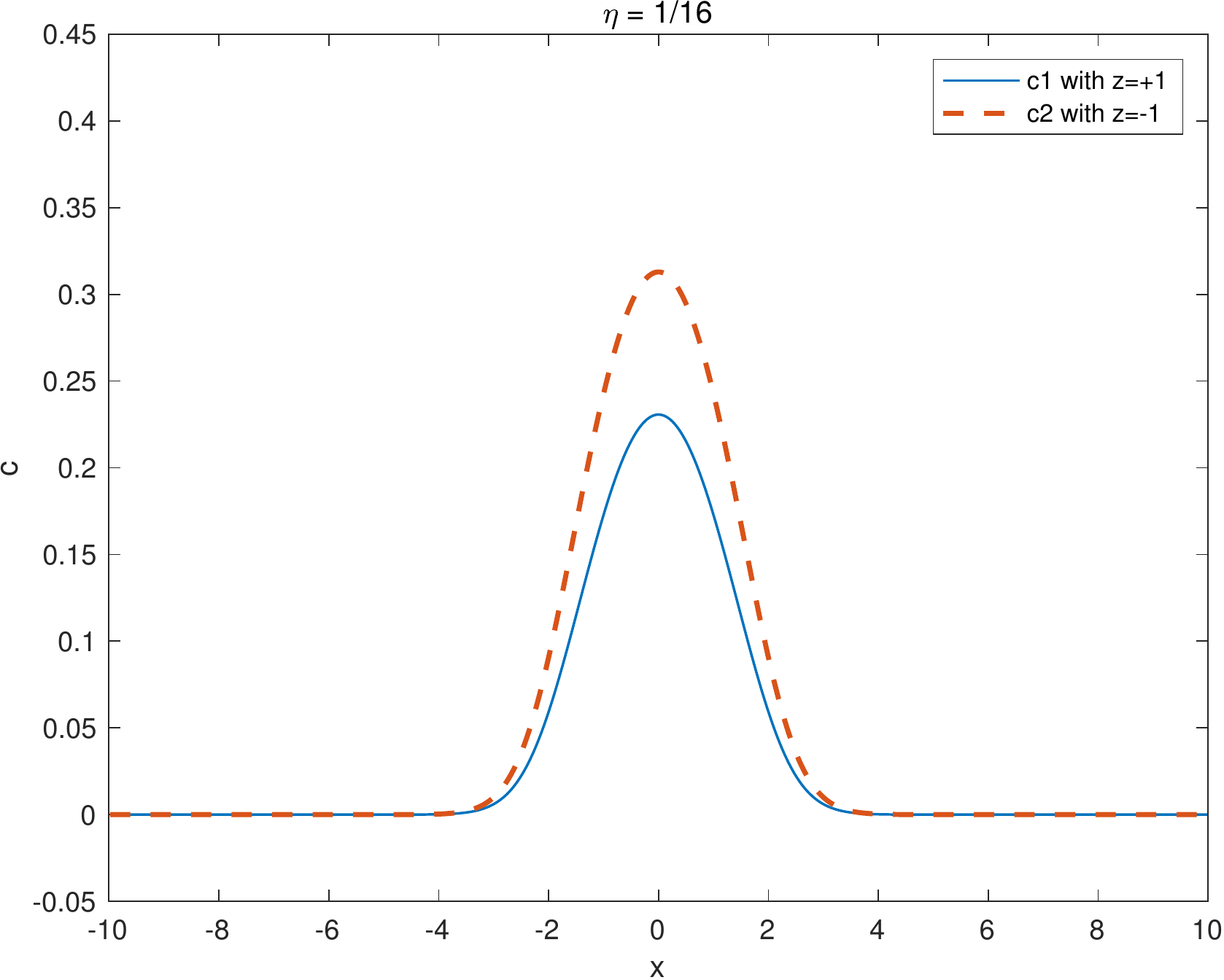}
		\end{minipage}
		\begin{minipage}[t]{0.33\linewidth}
			\centering
			\includegraphics[width=1.0\linewidth]{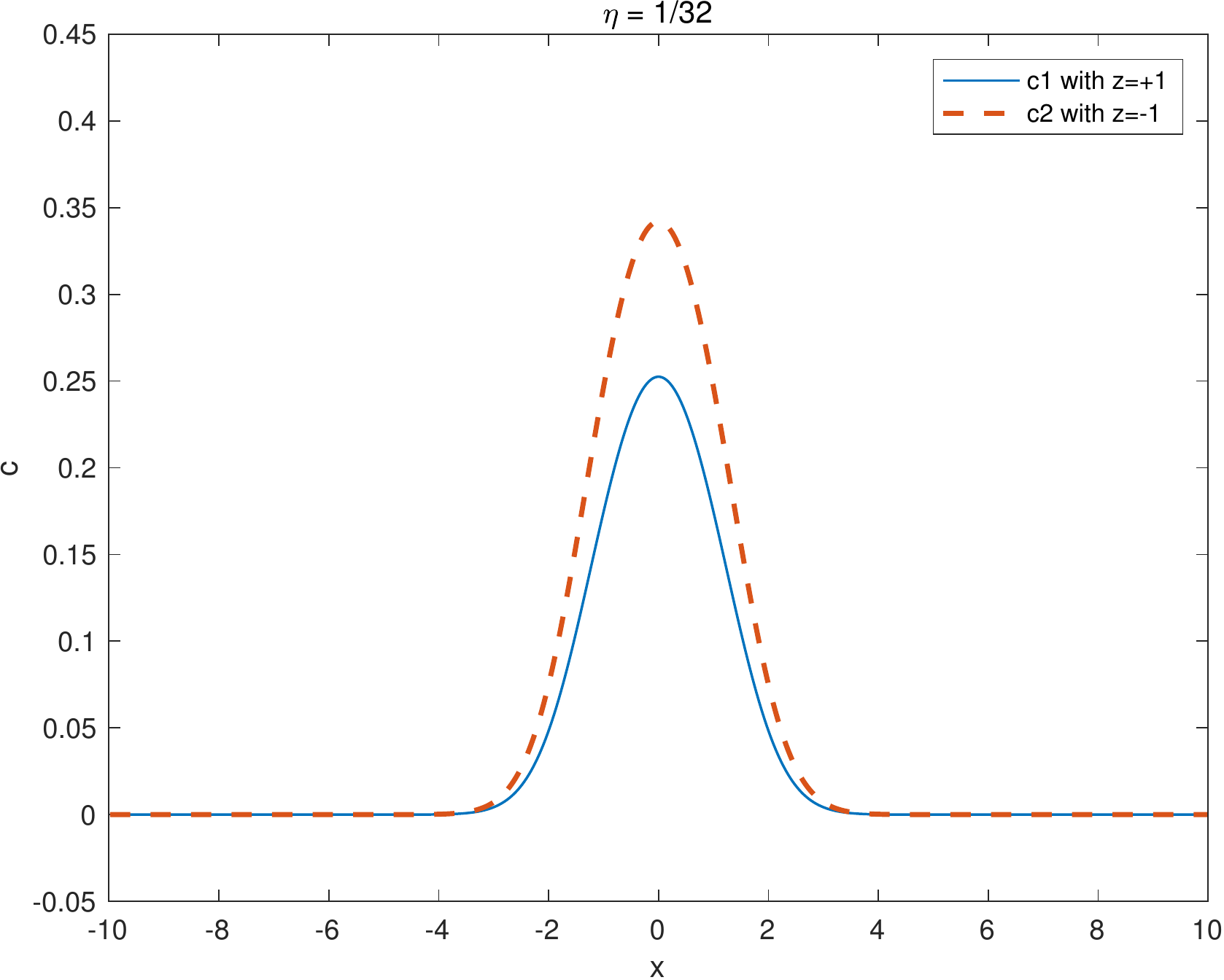}
		\end{minipage}
		\begin{minipage}[t]{0.33\linewidth}
			\centering
			\includegraphics[width=1.0\linewidth]{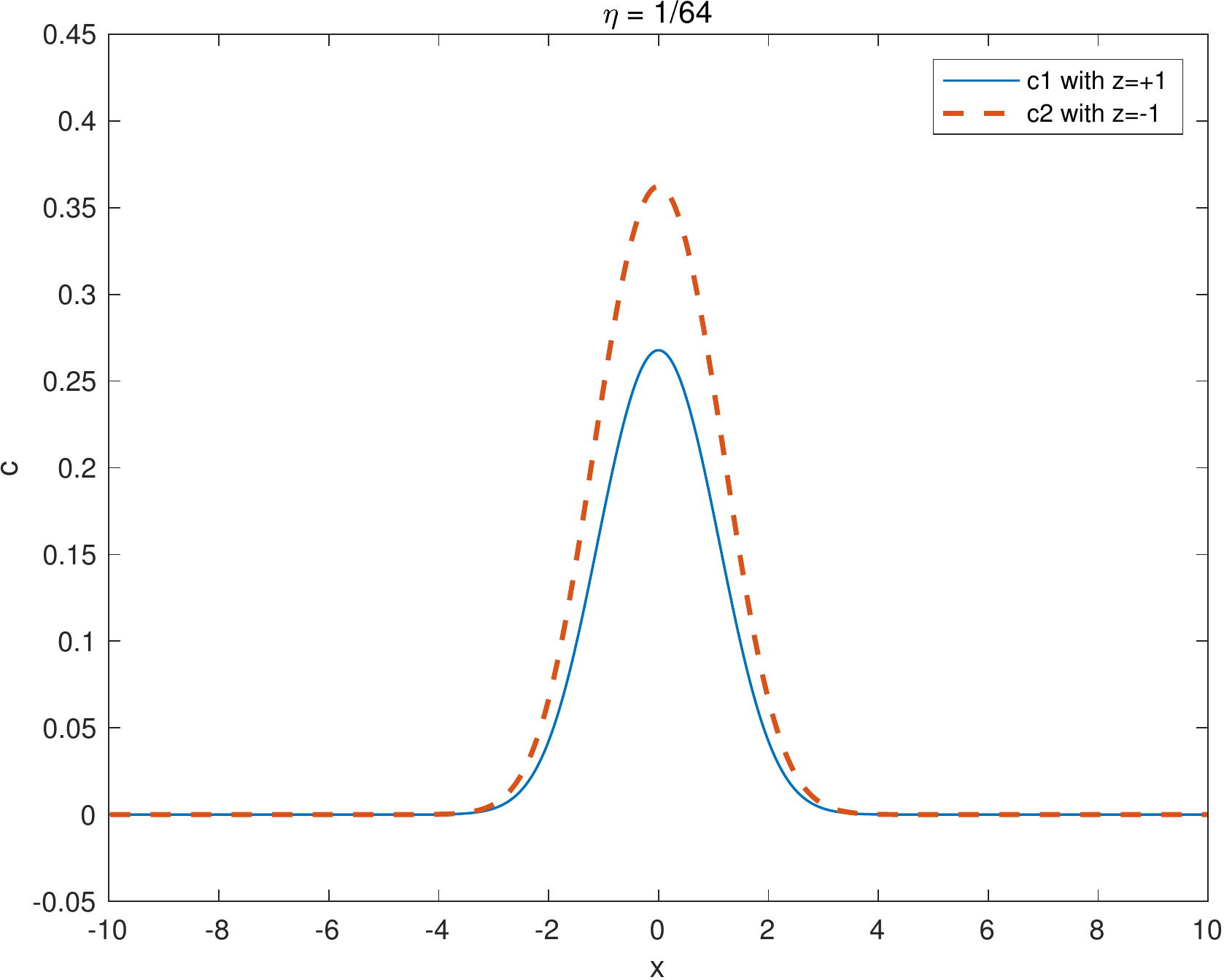}
		\end{minipage}
	}
	\subfigure{
		\begin{minipage}[t]{0.33\linewidth}
			\centering
			\includegraphics[width=1.0\linewidth]{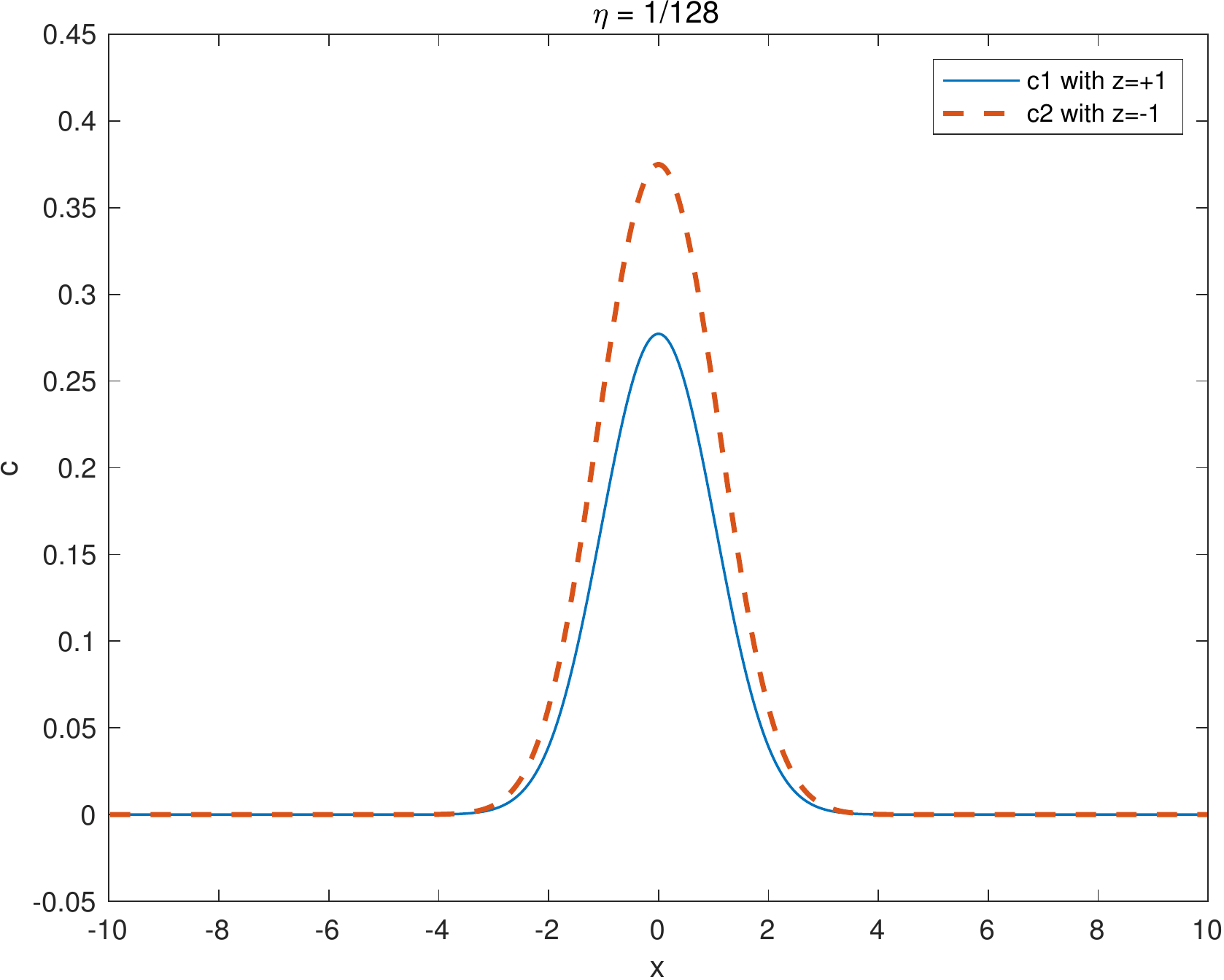}
		\end{minipage}
		\begin{minipage}[t]{0.33\linewidth}
			\centering
			\includegraphics[width=1.0\linewidth]{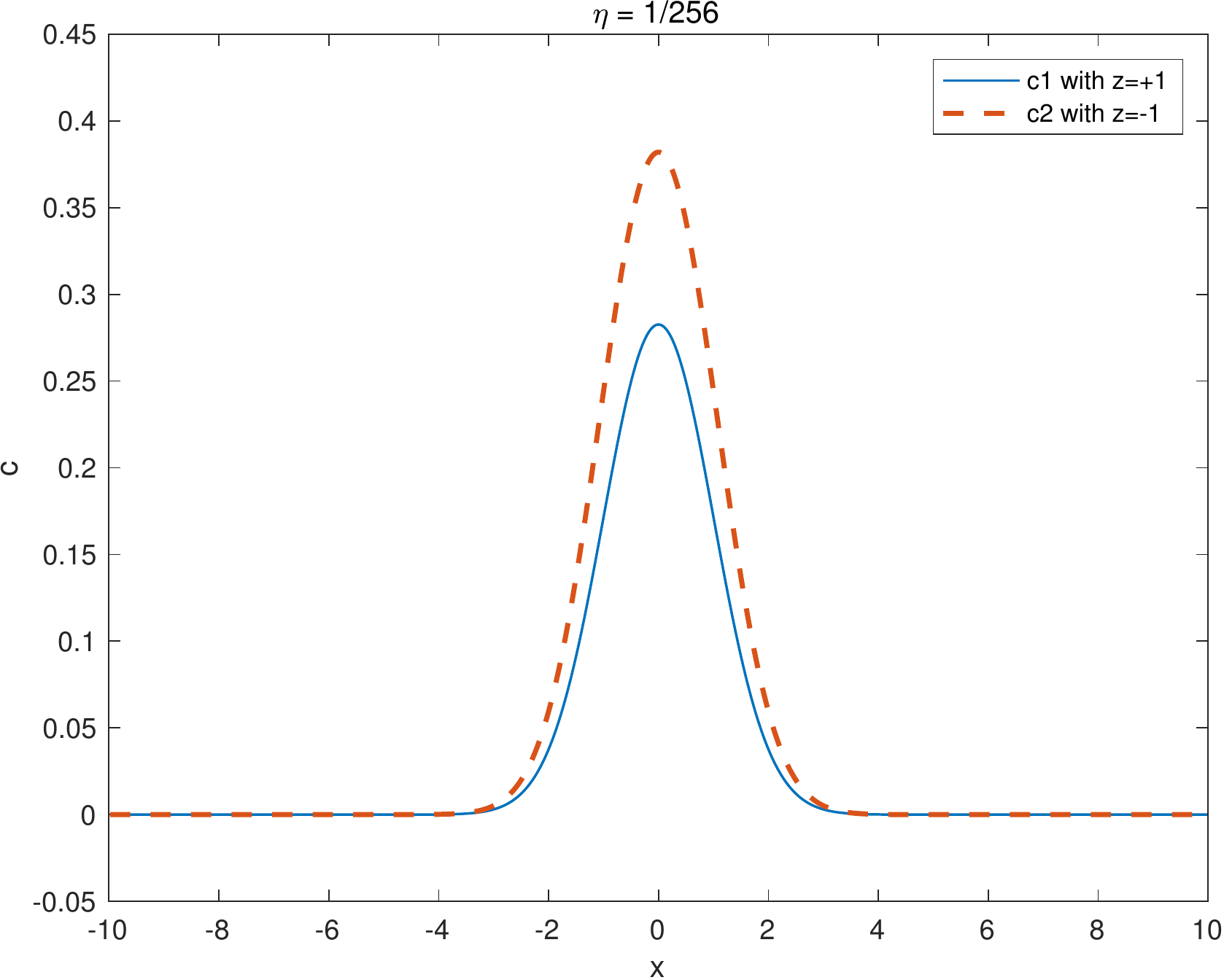}
		\end{minipage}
		\begin{minipage}[t]{0.33\linewidth}
			\centering
			\includegraphics[width=1.0\linewidth]{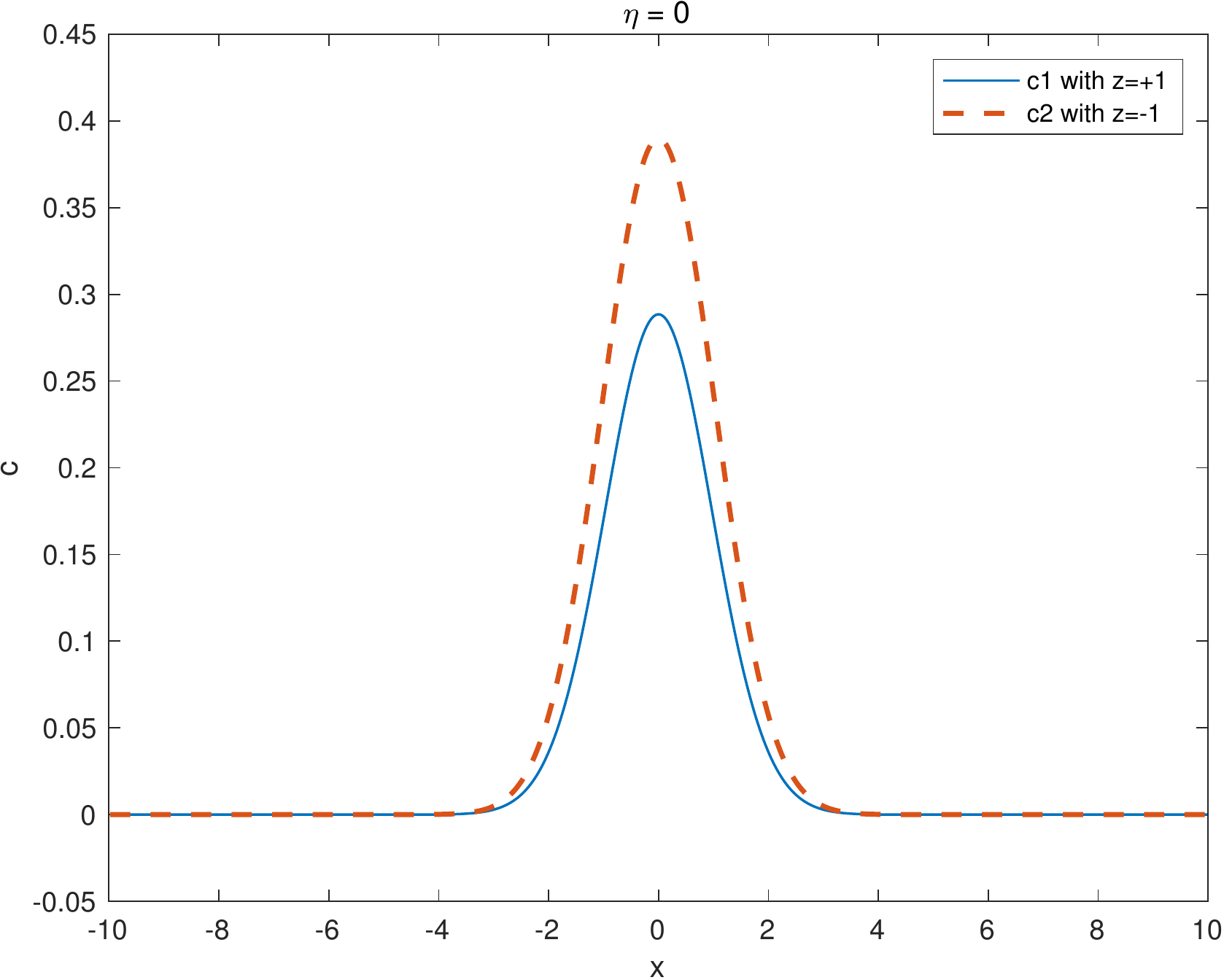}
		\end{minipage}
	}
	\caption{Multiple Species in One-dimension: The steady state density solutions $c_m$ with different $\eta$}
	\label{eta}
\end{figure}

\subsubsection{Boundary Value Problem}
If we retake the initial conditions (\ref{model4}) as  
\begin{equation}
\label{iniex2}
\left\{
\begin{array}{lll}
c_1(x, 0) = 10^{-6} &\text{with} &z_1 = 1, \\
c_2(x, 0) = \dfrac{1}{\sqrt{2 \pi}} \exp \left(-\dfrac{(x + 2)^2}{2} \right) &\text{with} &z_2 = -1.
\end{array}
\right.
\end{equation}
and the left boundary flux of $c_1$ as 
\begin{equation}
\label{bex2}
f_{-L}(t) = \dfrac{1}{\sqrt{2 \pi}} \exp \left(-\dfrac{(t - 5)^2}{2} \right),
\end{equation}
then Figure \ref{5c} shows how the density solutions $c_m, m = 1, 2$, develop with time $t$ and converge to the equilibrium.
\begin{figure}[htp]
	\subfigure{
		\begin{minipage}[t]{0.33\linewidth}
			\centering
			\includegraphics[width=1.0\linewidth]{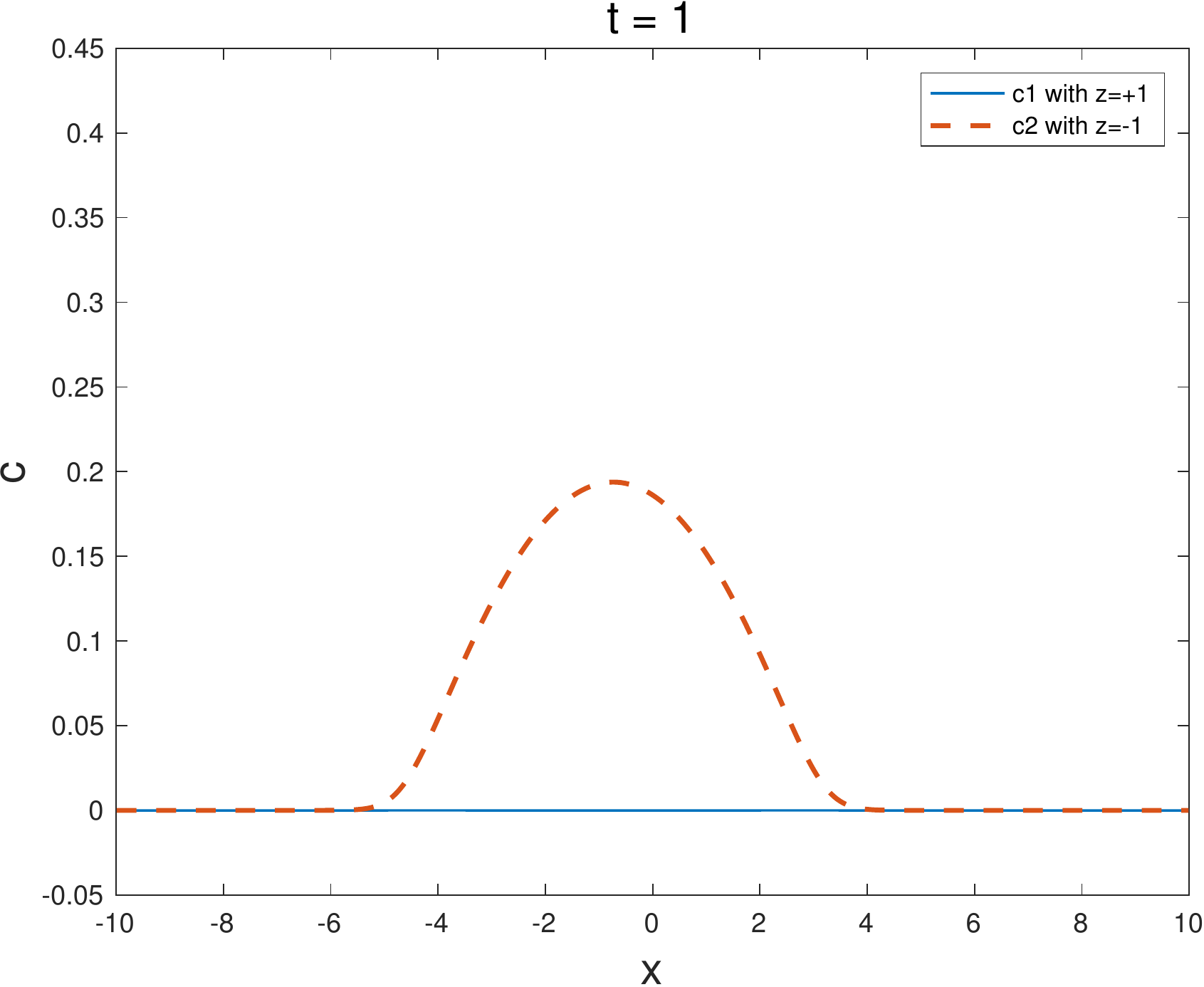}
		\end{minipage}
		\begin{minipage}[t]{0.33\linewidth}
			\centering
			\includegraphics[width=1.0\linewidth]{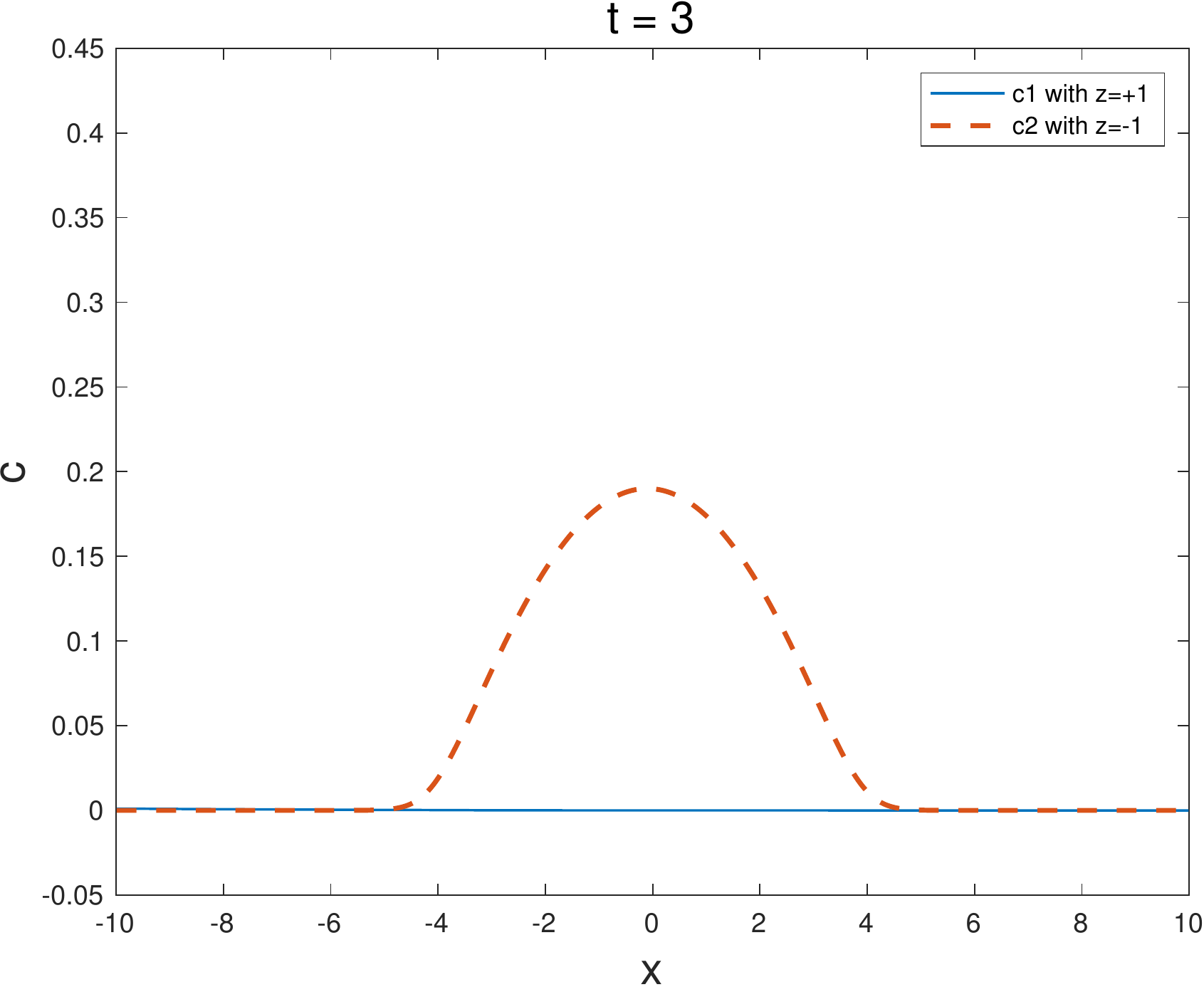}
		\end{minipage}
		\begin{minipage}[t]{0.33\linewidth}
			\centering
			\includegraphics[width=1.0\linewidth]{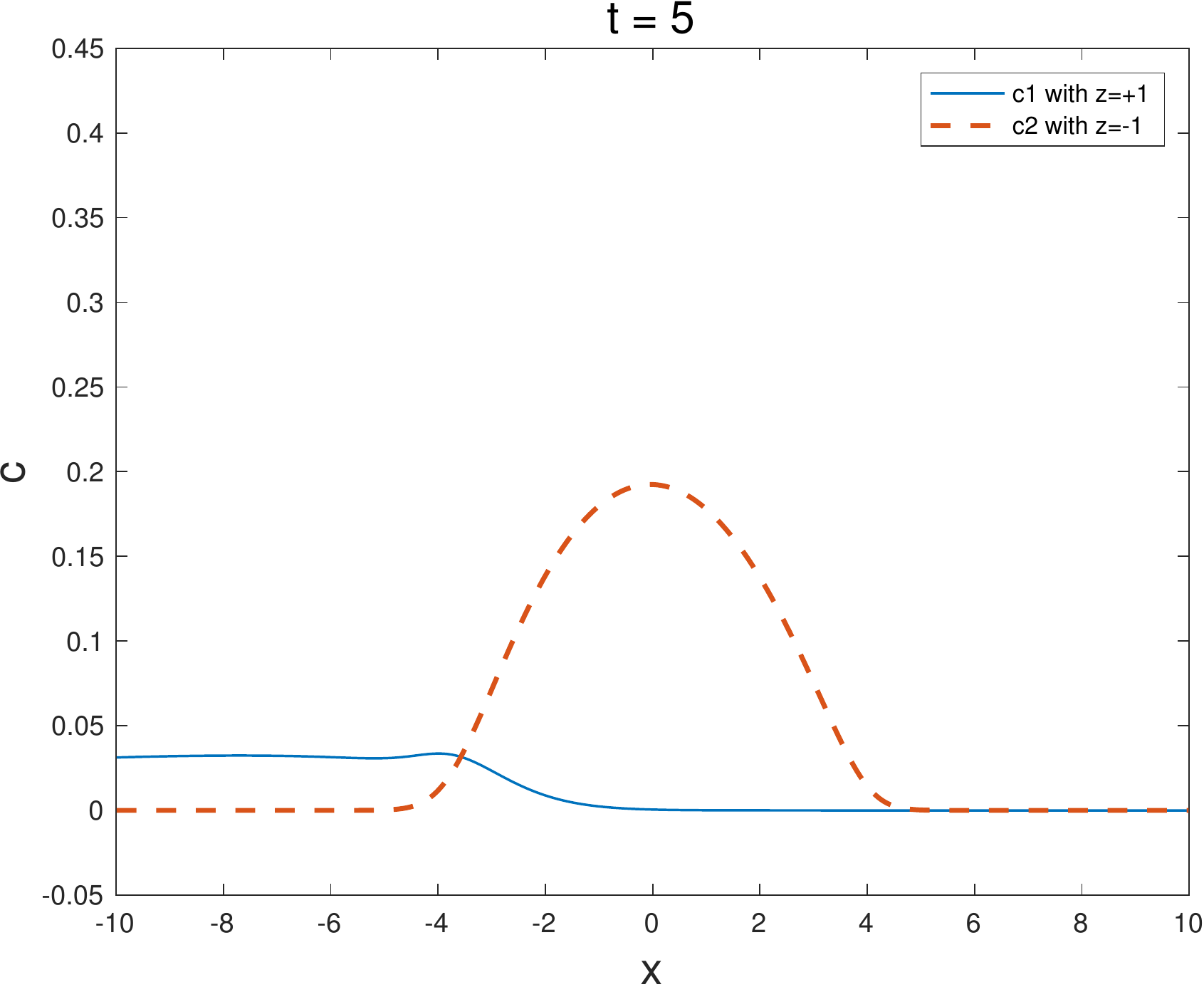}
		\end{minipage}
	}
	\subfigure{
		\begin{minipage}[t]{0.33\linewidth}
			\centering
			\includegraphics[width=1.0\linewidth]{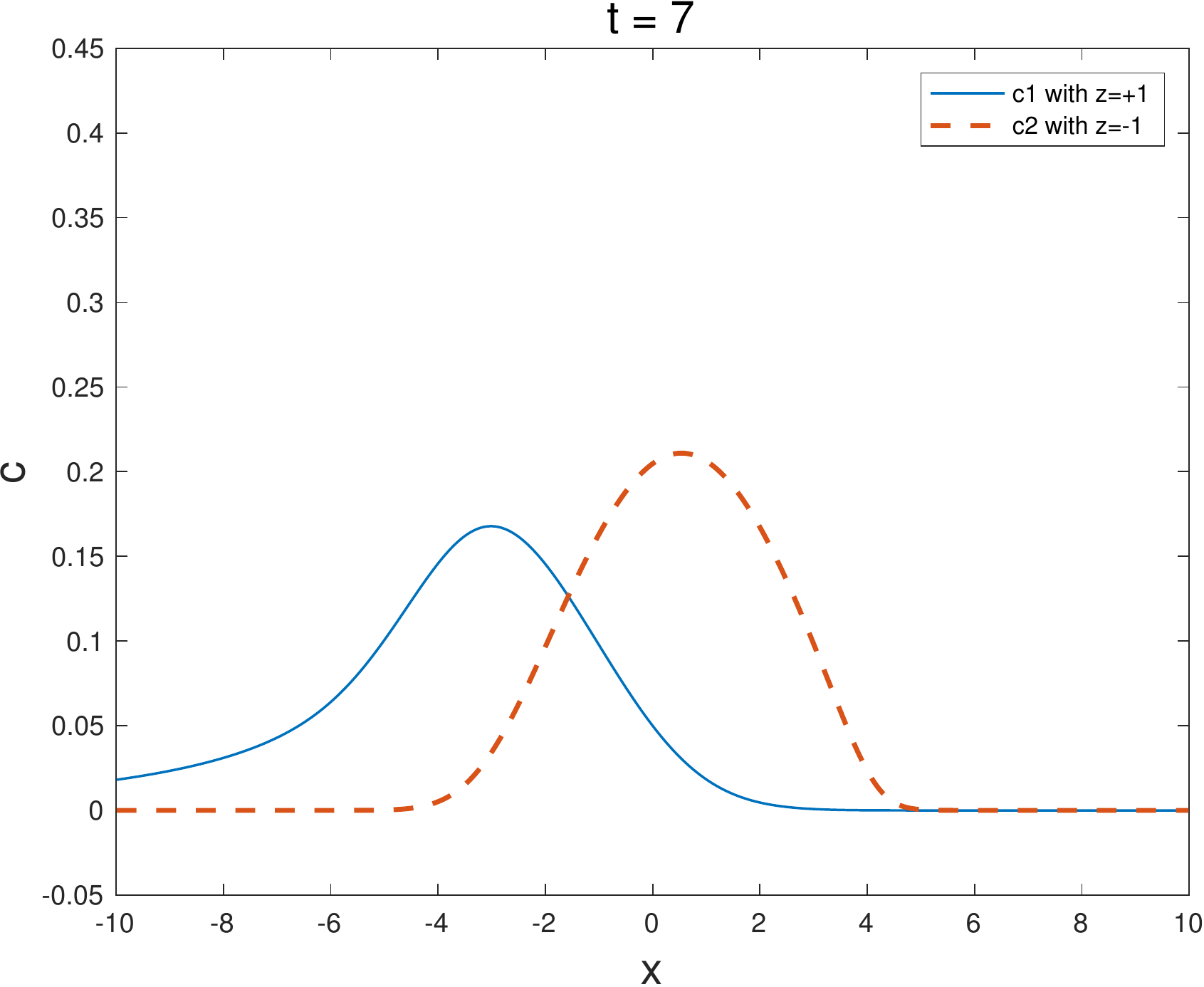}
		\end{minipage}
		\begin{minipage}[t]{0.33\linewidth}
			\centering
			\includegraphics[width=1.0\linewidth]{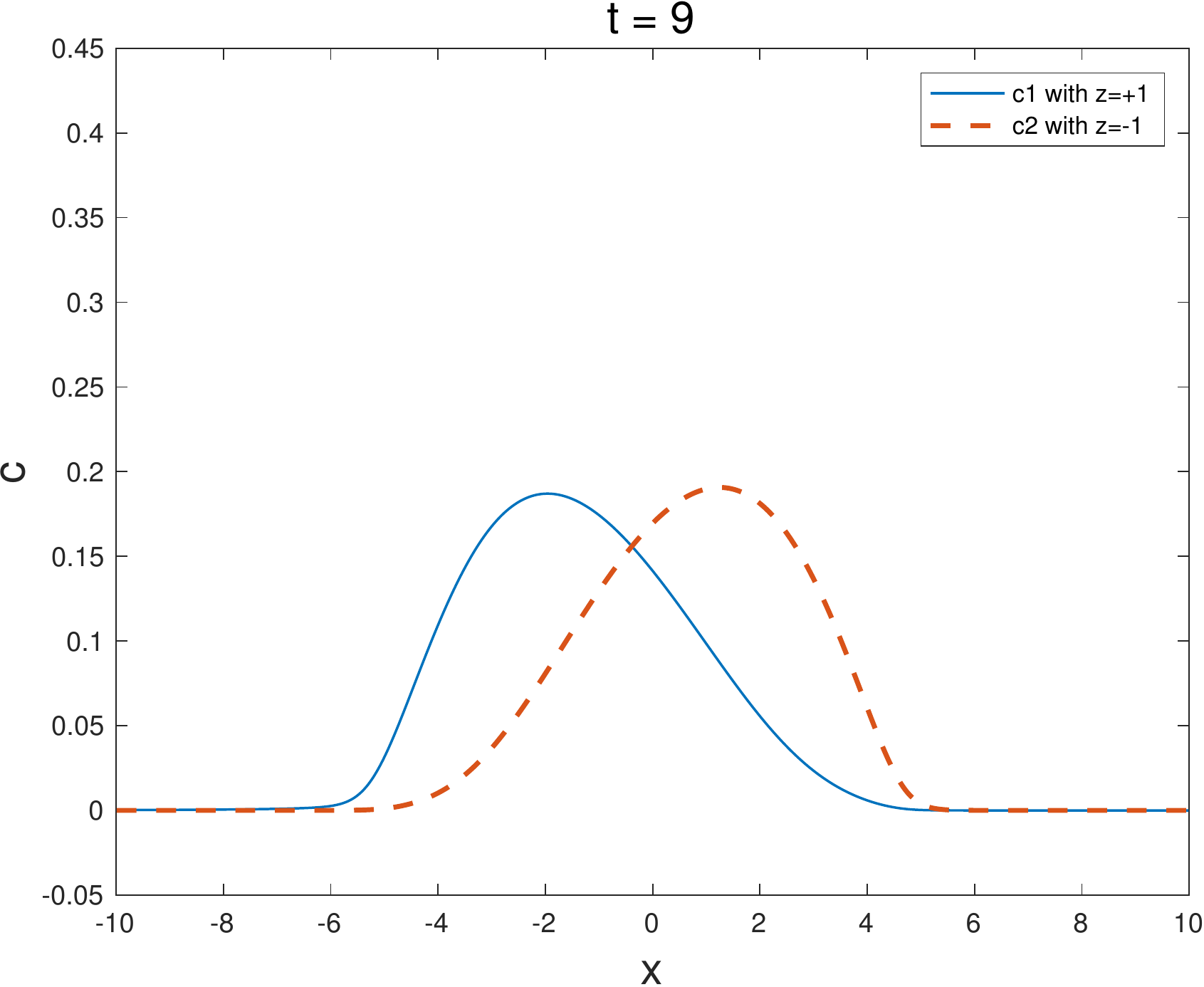}
		\end{minipage}
		\begin{minipage}[t]{0.33\linewidth}
			\centering
			\includegraphics[width=1.0\linewidth]{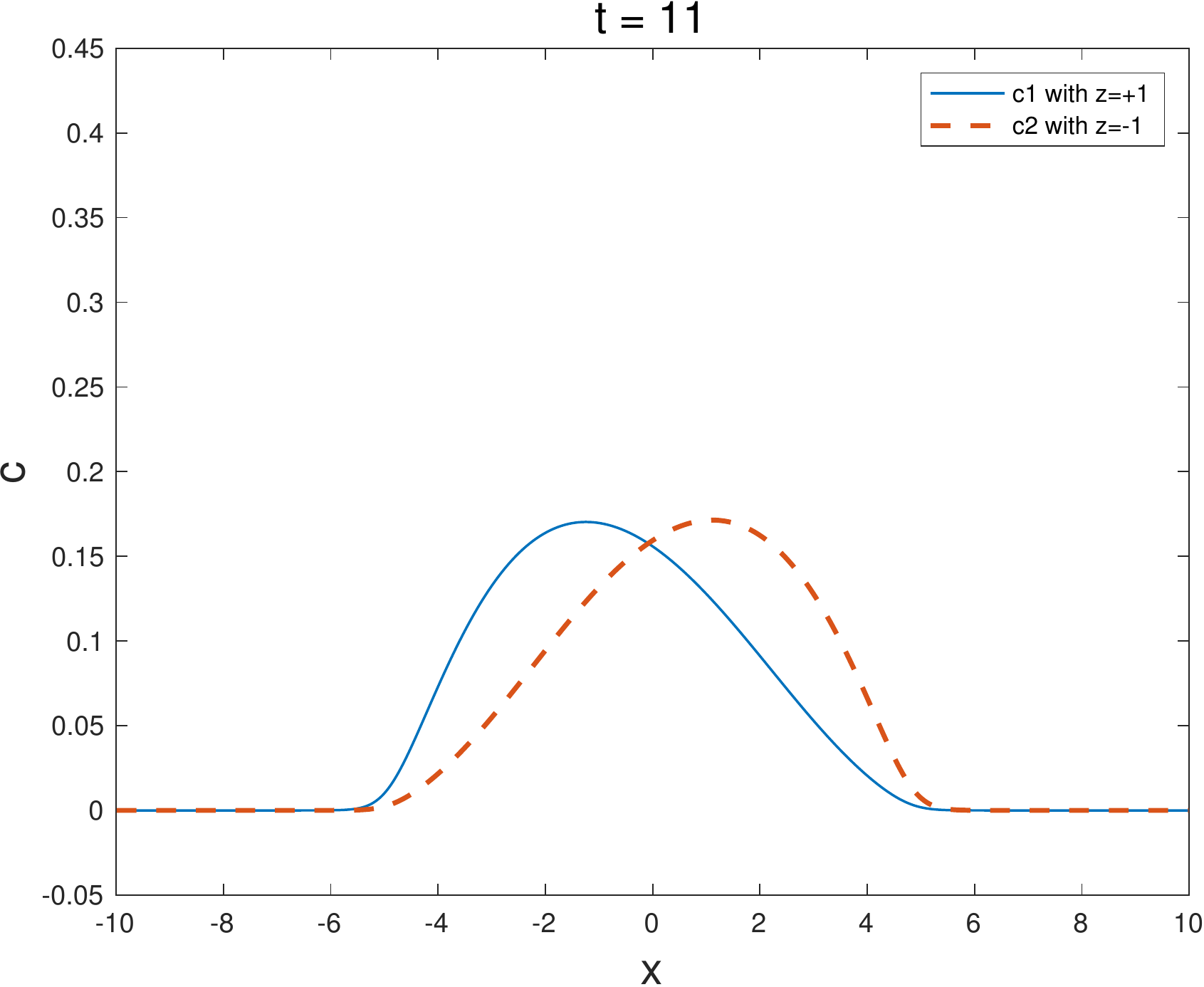}
		\end{minipage}
	}
	\subfigure{
		\begin{minipage}[t]{0.33\linewidth}
			\centering
			\includegraphics[width=1.0\linewidth]{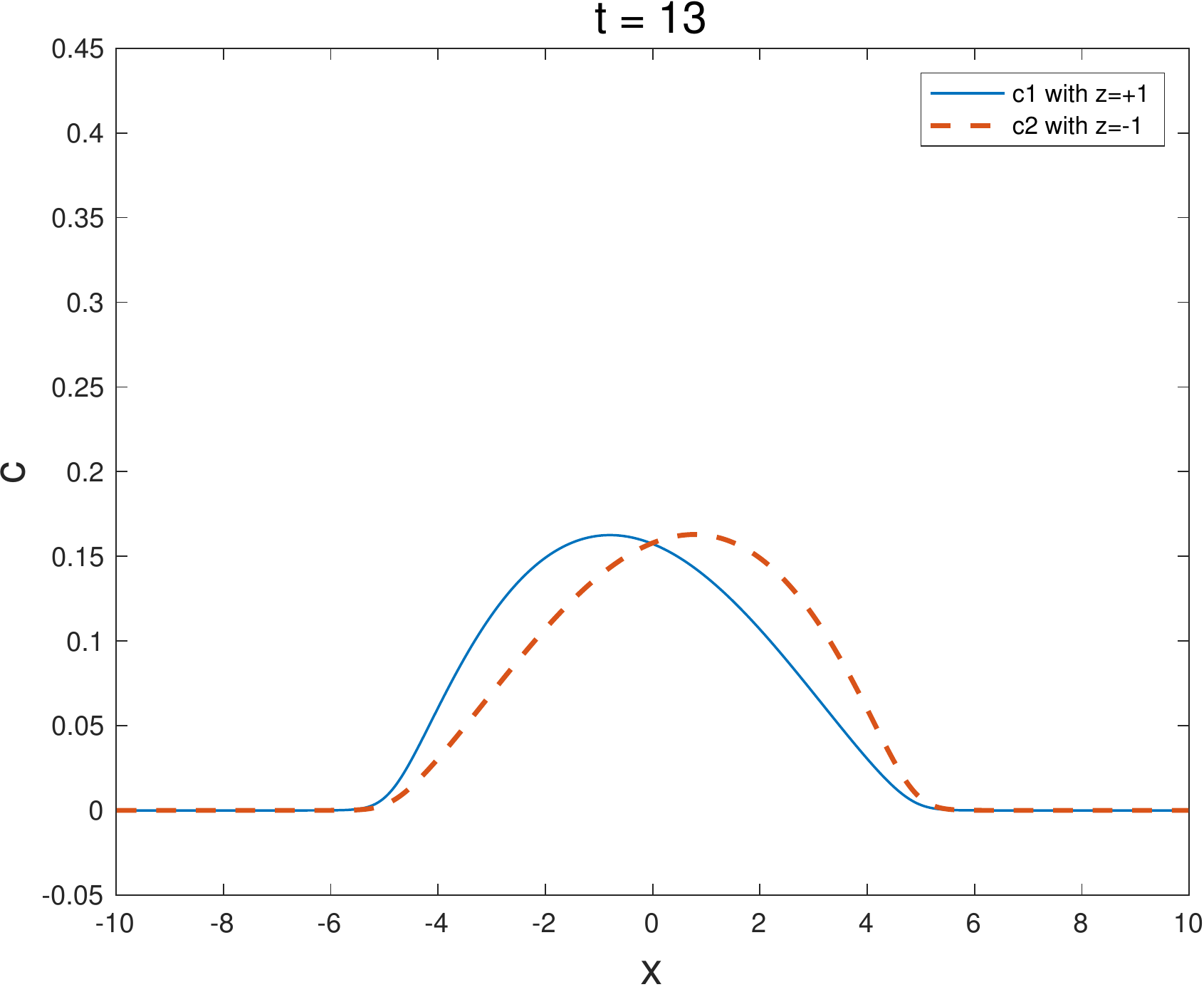}
		\end{minipage}
		\begin{minipage}[t]{0.33\linewidth}
			\centering
			\includegraphics[width=1.0\linewidth]{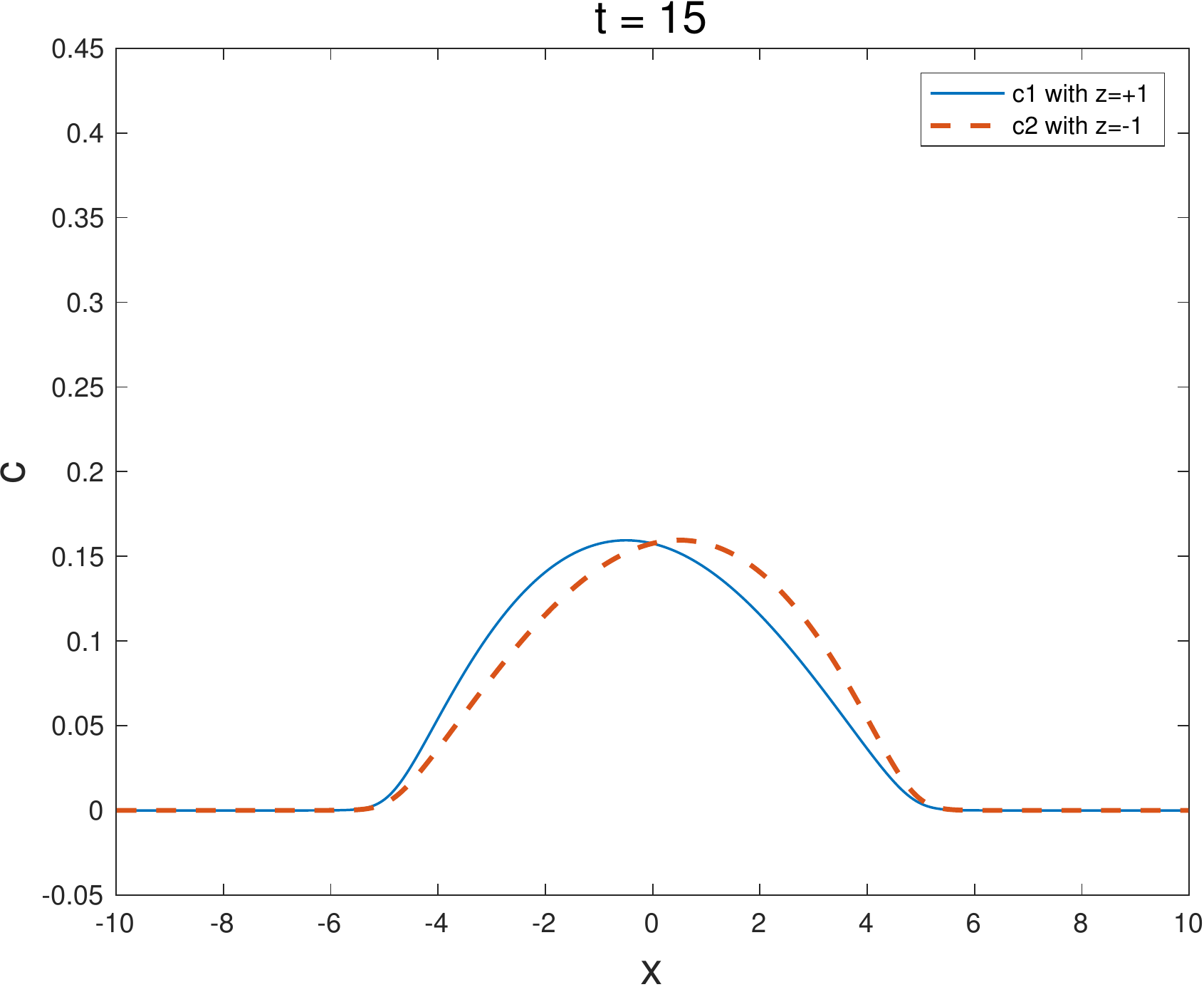}
		\end{minipage}
		\begin{minipage}[t]{0.33\linewidth}
			\centering
			\includegraphics[width=1.0\linewidth]{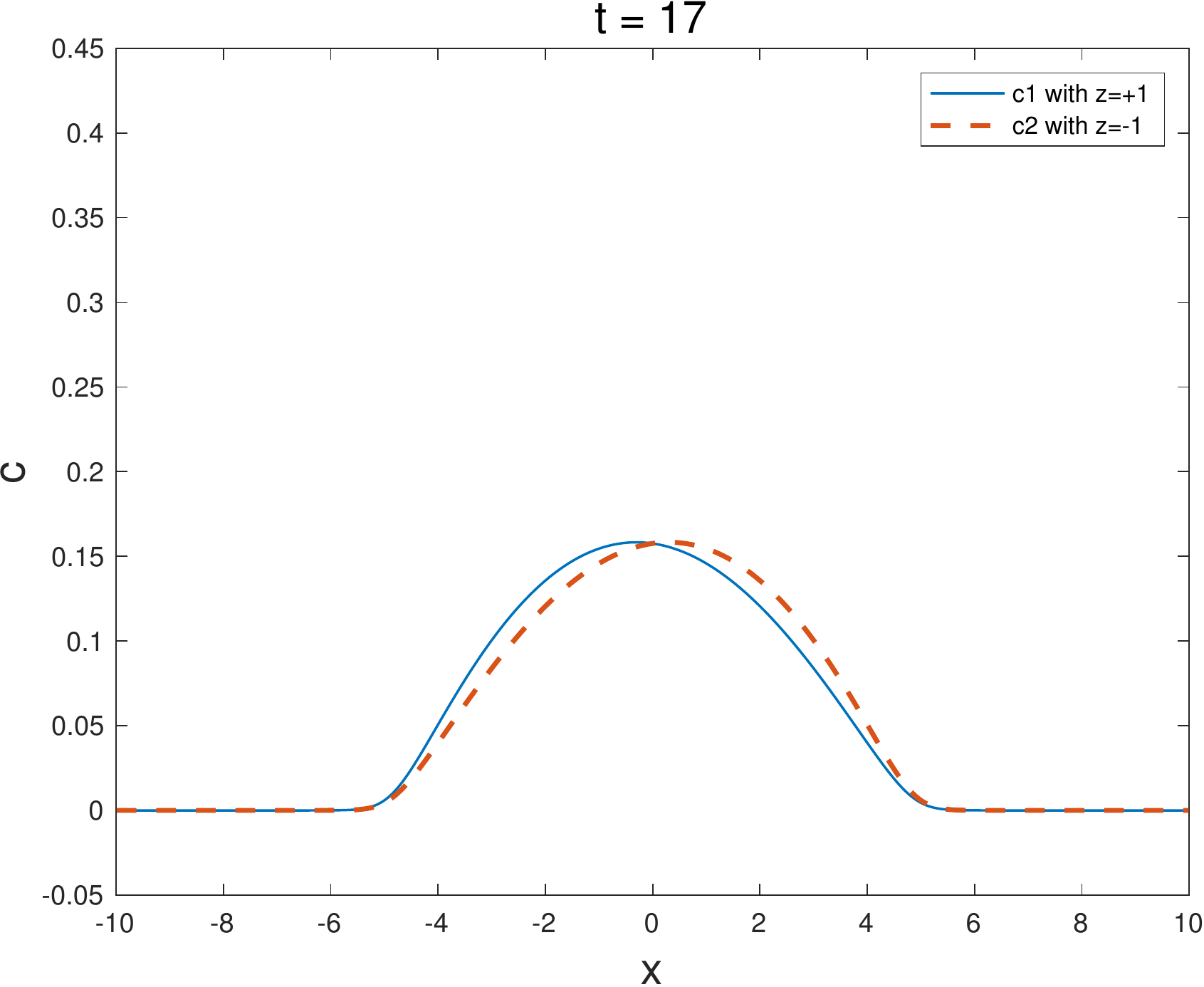}
		\end{minipage}
	}
	\subfigure{
		\begin{minipage}[t]{0.33\linewidth}
			\centering
			\includegraphics[width=1.0\linewidth]{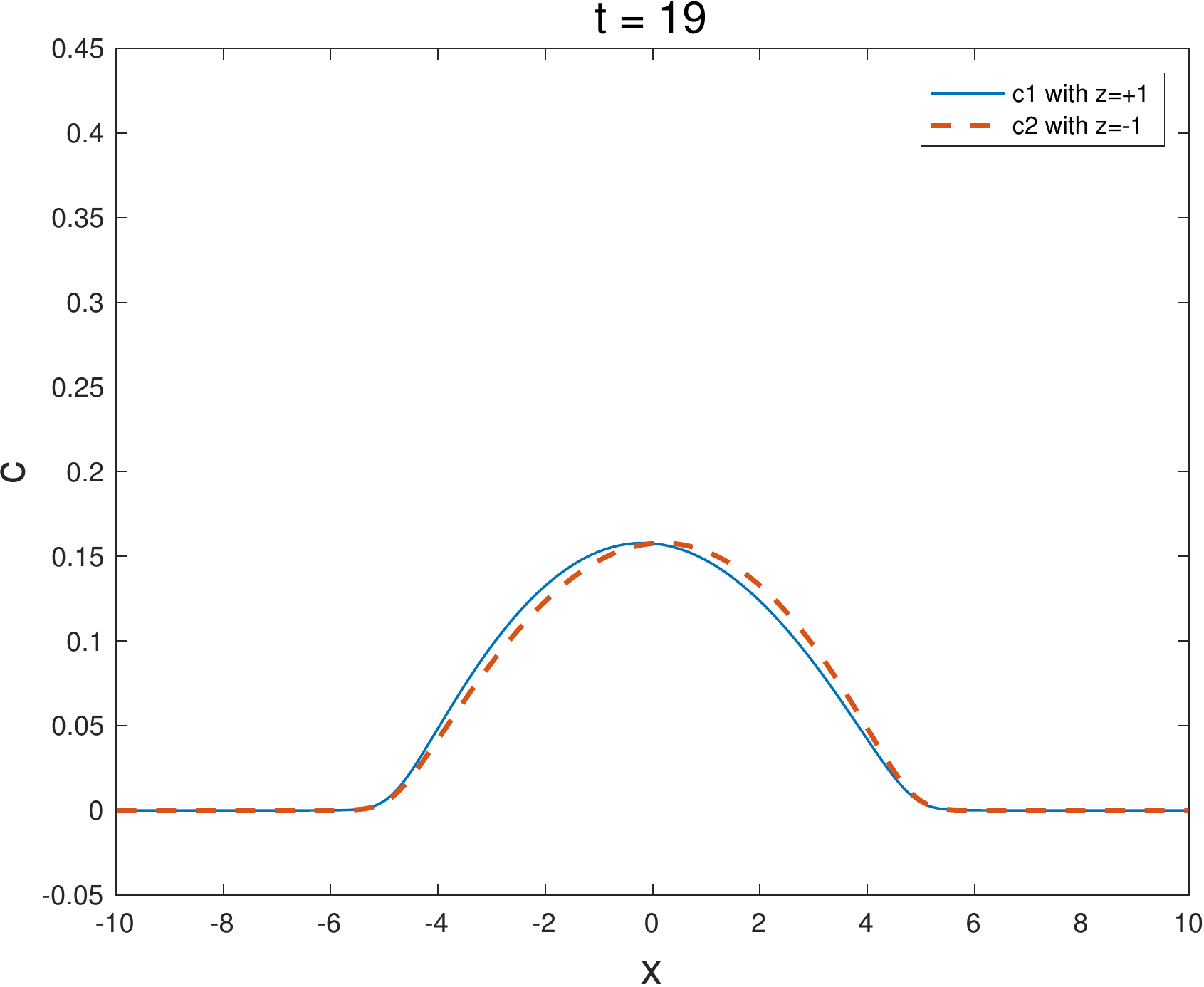}
		\end{minipage}
		\begin{minipage}[t]{0.33\linewidth}
			\centering
			\includegraphics[width=1.0\linewidth]{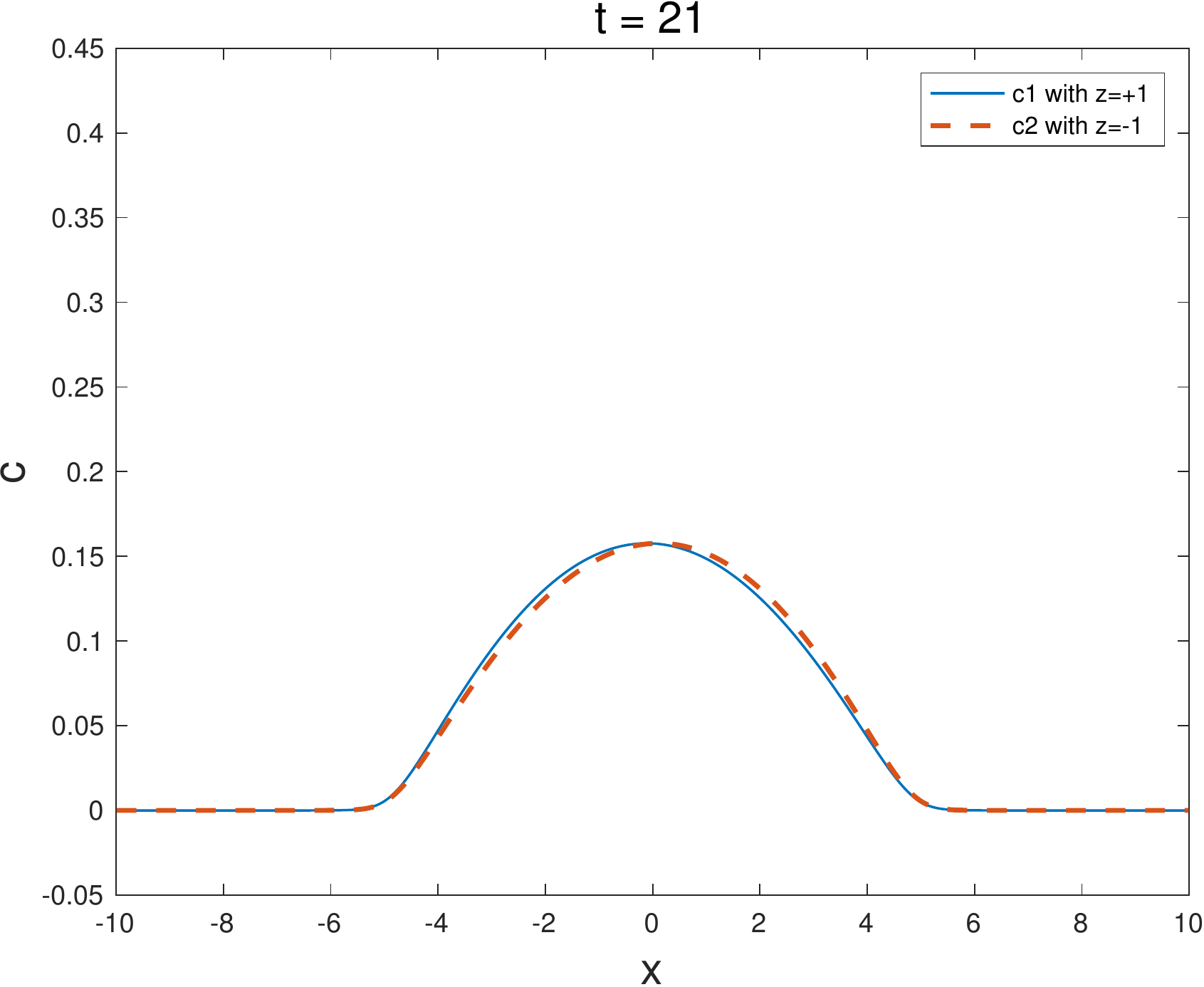}
		\end{minipage}
		\begin{minipage}[t]{0.33\linewidth}
			\centering
			\includegraphics[width=1.0\linewidth]{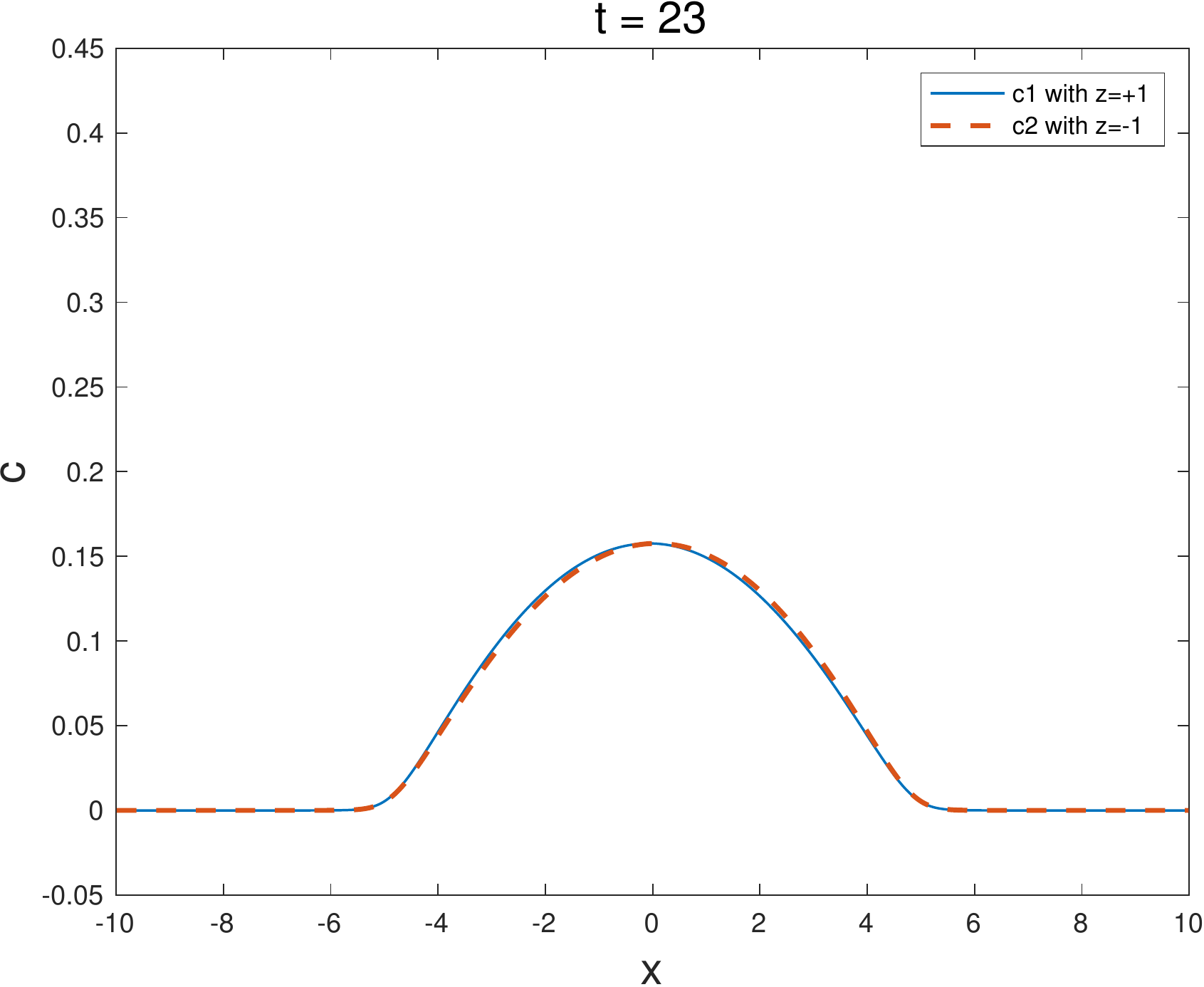}
		\end{minipage}
	}
	\caption{Multiple Species in One-dimension: The space-concentration curves with the mesh size $\Delta x$ being 0.01953125 and the time $t$ changing from 1 to 23}
	\label{5c}
\end{figure}

Similarly, Figure \ref{im333}(a) shows how  the discrete forms of the energy $\mathcal{F}_1, \mathcal{F}_2, \mathcal{F}_3, \mathcal{F}_4, \mathcal{F}$ changes with time $t$ and Figure \ref{im333}(b) shows the discrete chemical potential $\psi_m$ at time $t = 23$, which goes to a constant while the model goes to the equilibrium for all $m$.
\begin{figure}[htp] 
	\hspace*{0.05\textwidth}
	\subfigure[]{ 
		\includegraphics[width=0.45\textwidth]{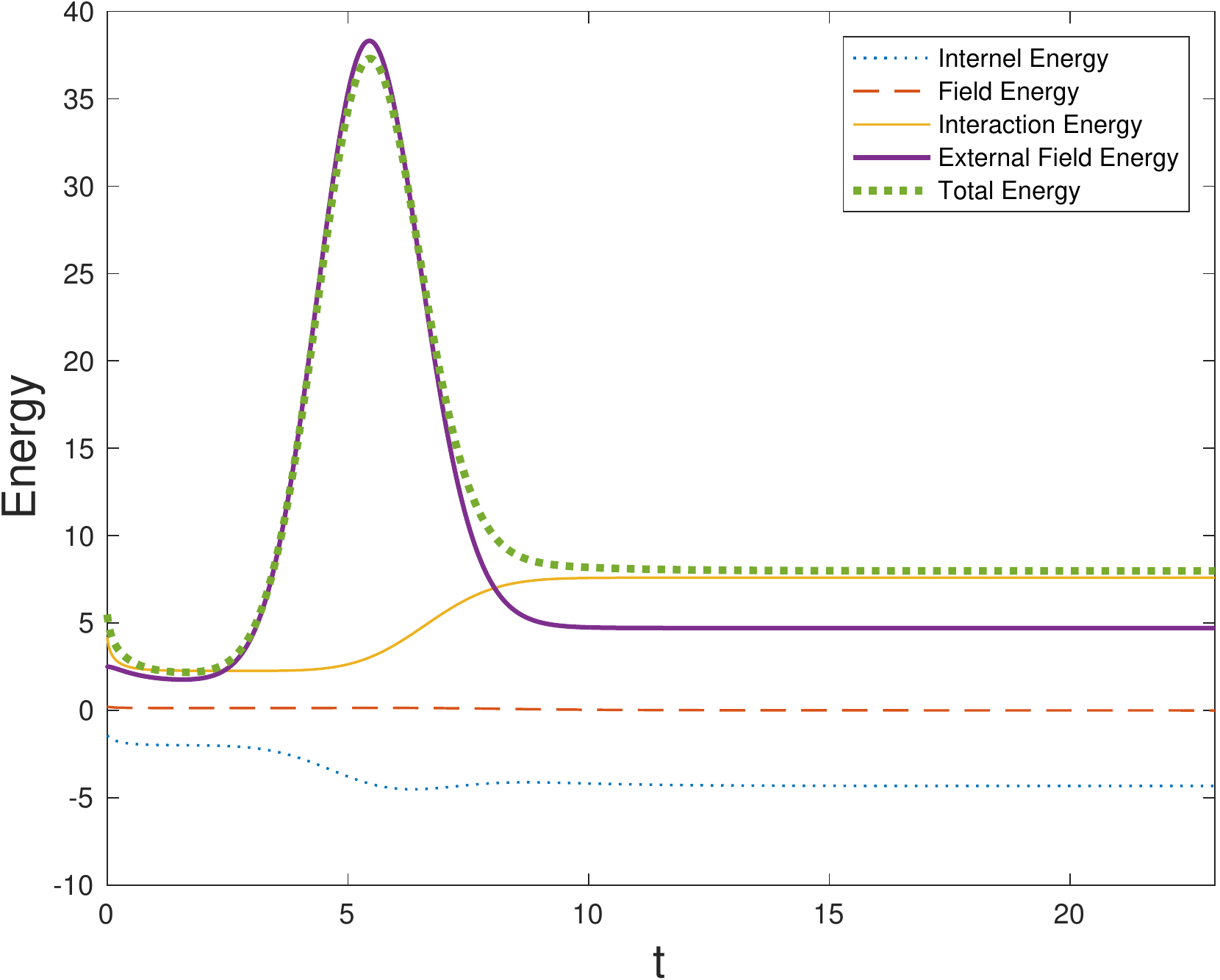}} 
	\subfigure[]{ 
		\includegraphics[width=0.45\textwidth]{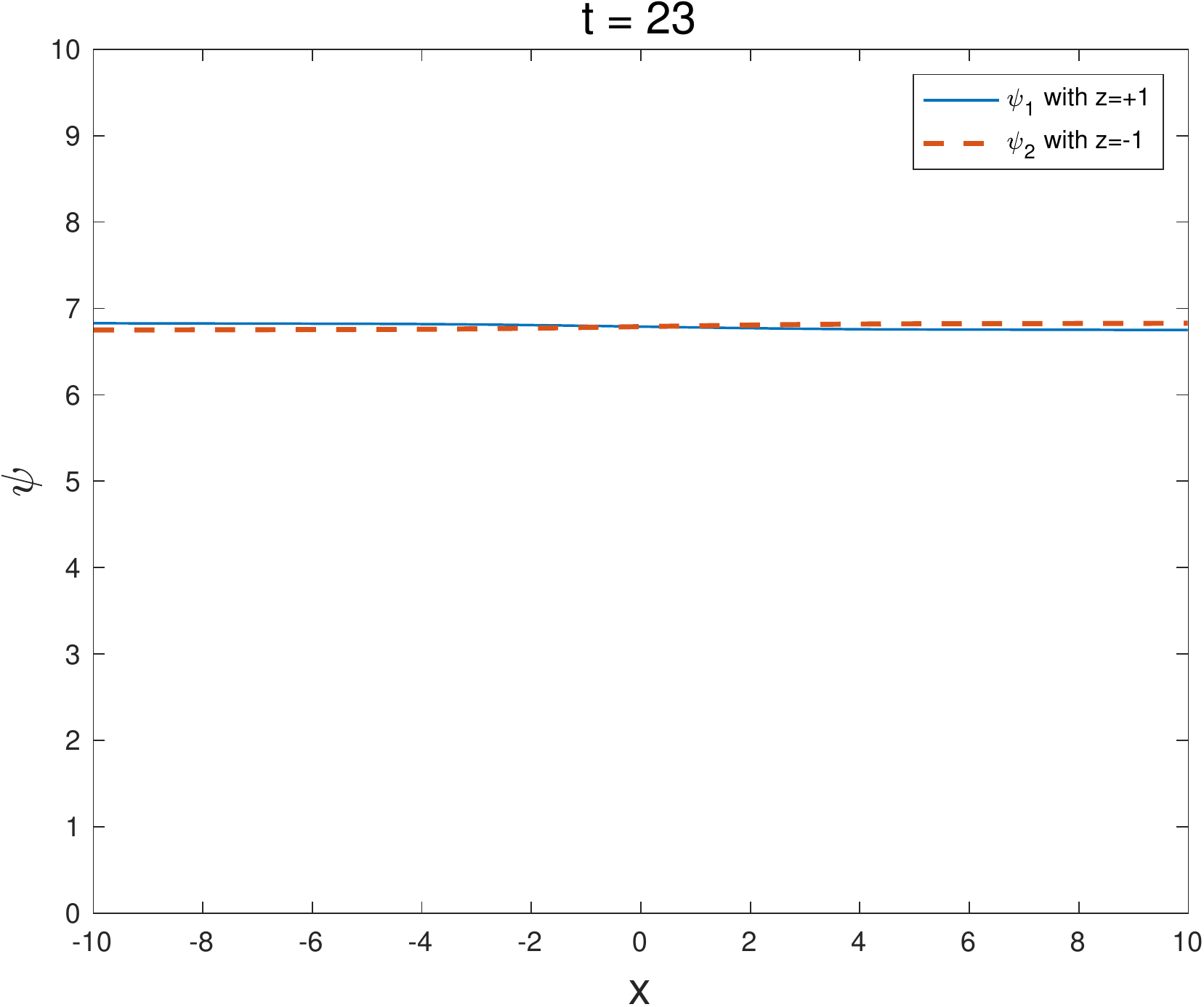}} 
	\caption{Multiple Species in One-dimension: (a): The time-energy plot of the model (\ref{model3}) equipped with the initial conditions (\ref{iniex2}) and the boundary condition (\ref{bex2}) with the mesh size $\Delta x$ being 0.01953125. (b): Discrete chemical potential at time $t = 23$ with the mesh size $\Delta x$ being 0.01953125.}
	\label{im333}
\end{figure}

\subsection{Multiple Species in Two-dimension}
\subsubsection{Steady State}
Here we consider the two-dimensional kernel $\mathcal{W}(x, y) = \frac{1}{r^2 + \epsilon^2}, r = \sqrt{x^2 + y^2}, \epsilon = \frac{1}{10}$, $\mathcal{K}(x, y) = - \frac{1}{2 \pi} \log(\sqrt{r^2 + \epsilon^2})$, $V_{\text{ext}}(x, y) = \frac{1}{2} r^2$ and the initial conditions (\ref{model4}) are given by the following form
\begin{equation}
\label{iniex3}
\left\{
\begin{array}{lll}
c_1^0 = \dfrac{1}{\sqrt{2 \pi}} \exp \left(-\dfrac{(x - 2)^2 + (y - 2)^2}{2} \right) &\text{with} &z_1 = 1, \\
c_2^0 = \dfrac{1}{\sqrt{2 \pi}} \exp \left(-\dfrac{(x + 2)^2 + (y + 2)^2}{2} \right) &\text{with} &z_2 = -1.
\end{array}
\right.
\end{equation}
Here, we retake $\eta = 1$, the computation domain as $[-L, L] \times [-L, L], \ L = 10$ and the mesh size $\Delta x = \Delta y = 0.0390625$, $\Delta t$ is determined by (\ref{cfl2}), here $\Delta t = \max \left\{\dfrac{\Delta x}{5 ~ U_{\text{max}}}, \dfrac{\Delta y}{5 ~ V_{\text{max}}} \right\}$. And we omit it in other two-dimensional examples. Figure \ref{c1oft} and Figure \ref{c2oft} show how the concentrations of the $m$-th ionic species $c_m, m = 1, 2$, change with time $t$ respectively. And Figure \ref{eoft2d} shows the relation between the time $t$ and the discrete forms of the energy $\mathcal{F}, \mathcal{F}_1, \mathcal{F}_2, \mathcal{F}_3, \mathcal{F}_4$. 

\begin{figure}[htp] 
	\centering
	\subfigure{
		\begin{minipage}[t]{0.35\linewidth}
			\centering
			\includegraphics[width=1.0\linewidth]{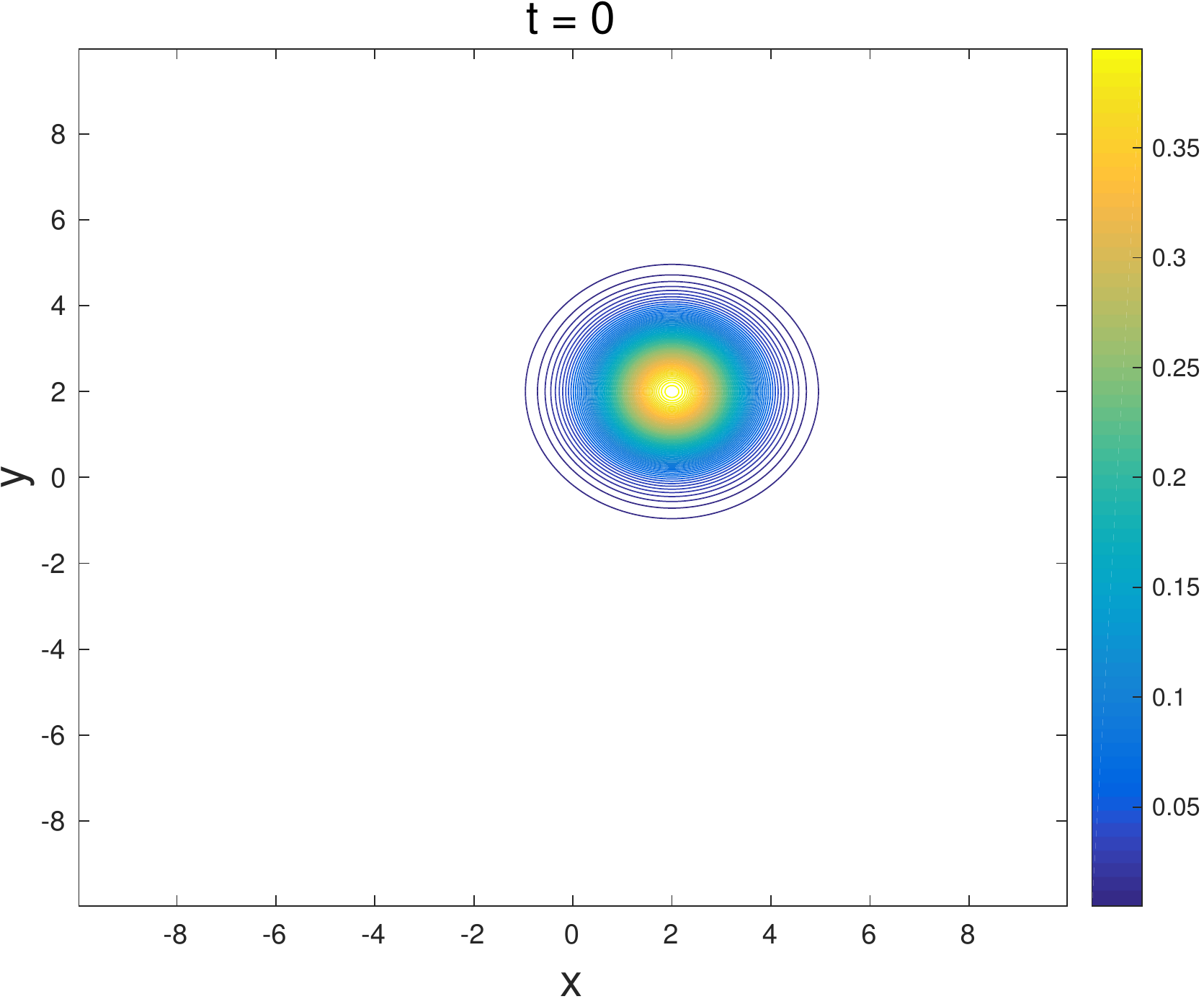}
		\end{minipage}
		\begin{minipage}[t]{0.35\linewidth}
			\centering
			\includegraphics[width=1.0\linewidth]{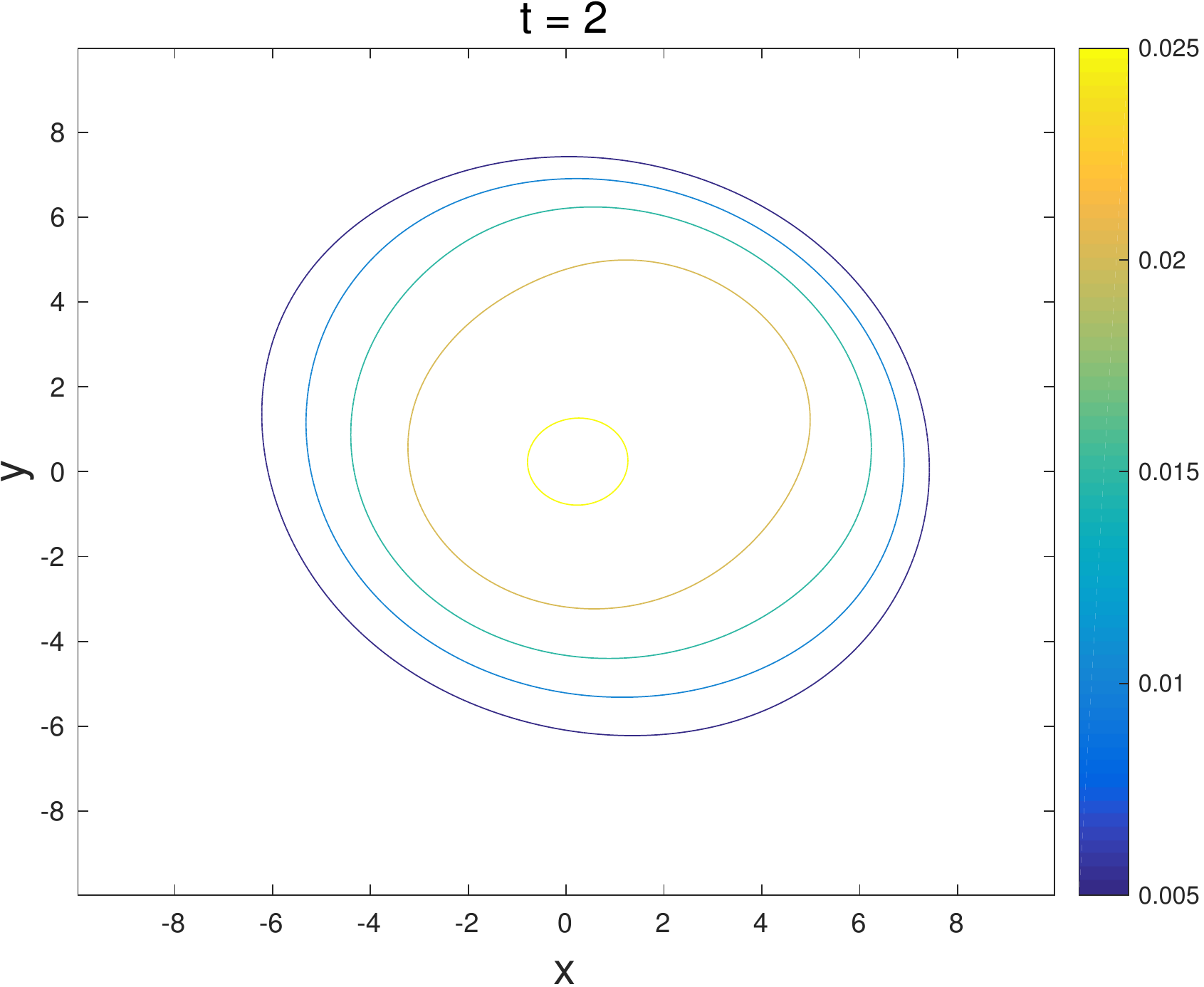}
		\end{minipage}
	}
	\subfigure{
		\begin{minipage}[t]{0.35\linewidth}
			\centering
			\includegraphics[width=1.0\linewidth]{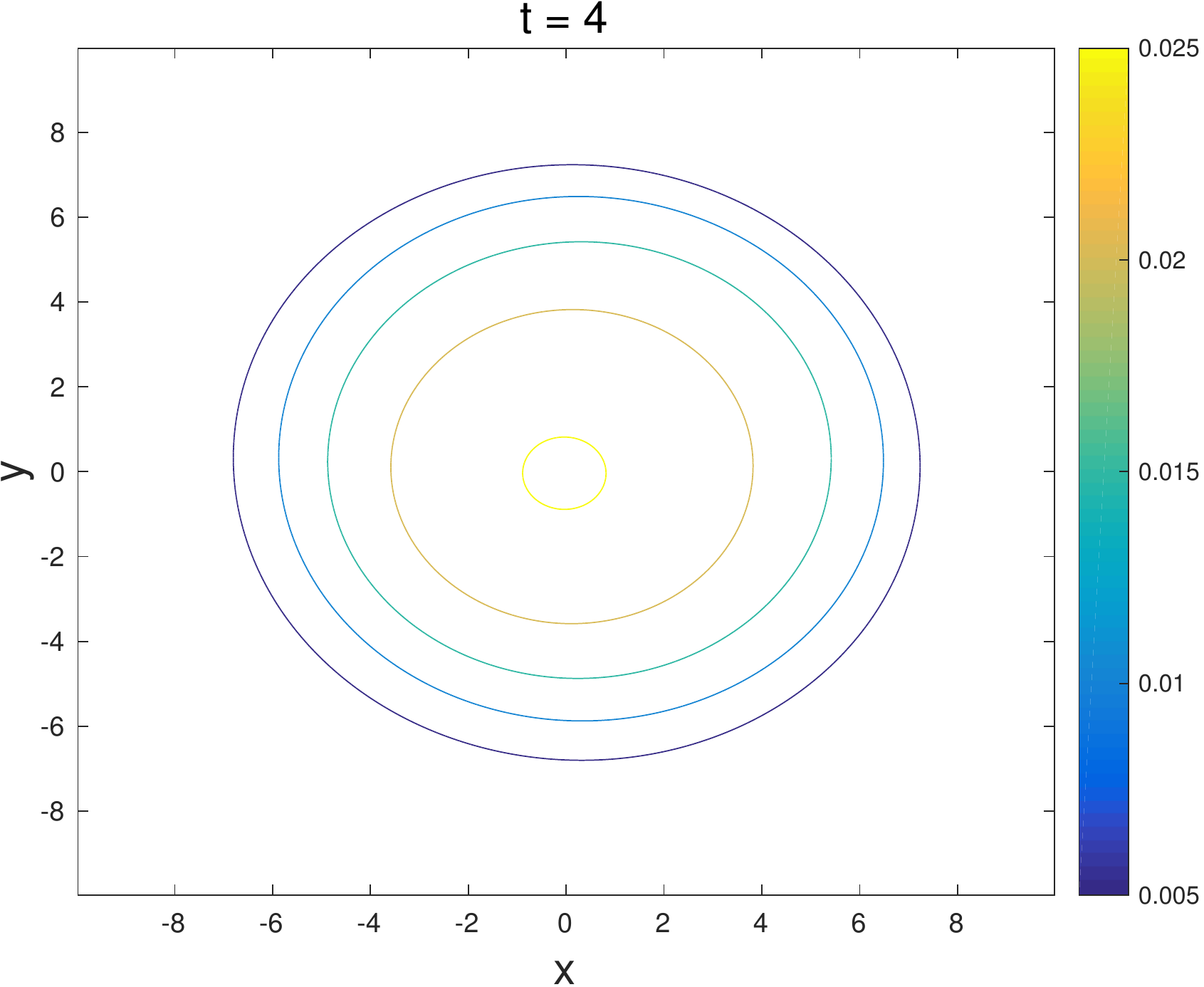}
		\end{minipage}
		\begin{minipage}[t]{0.35\linewidth}
			\centering
			\includegraphics[width=1.0\linewidth]{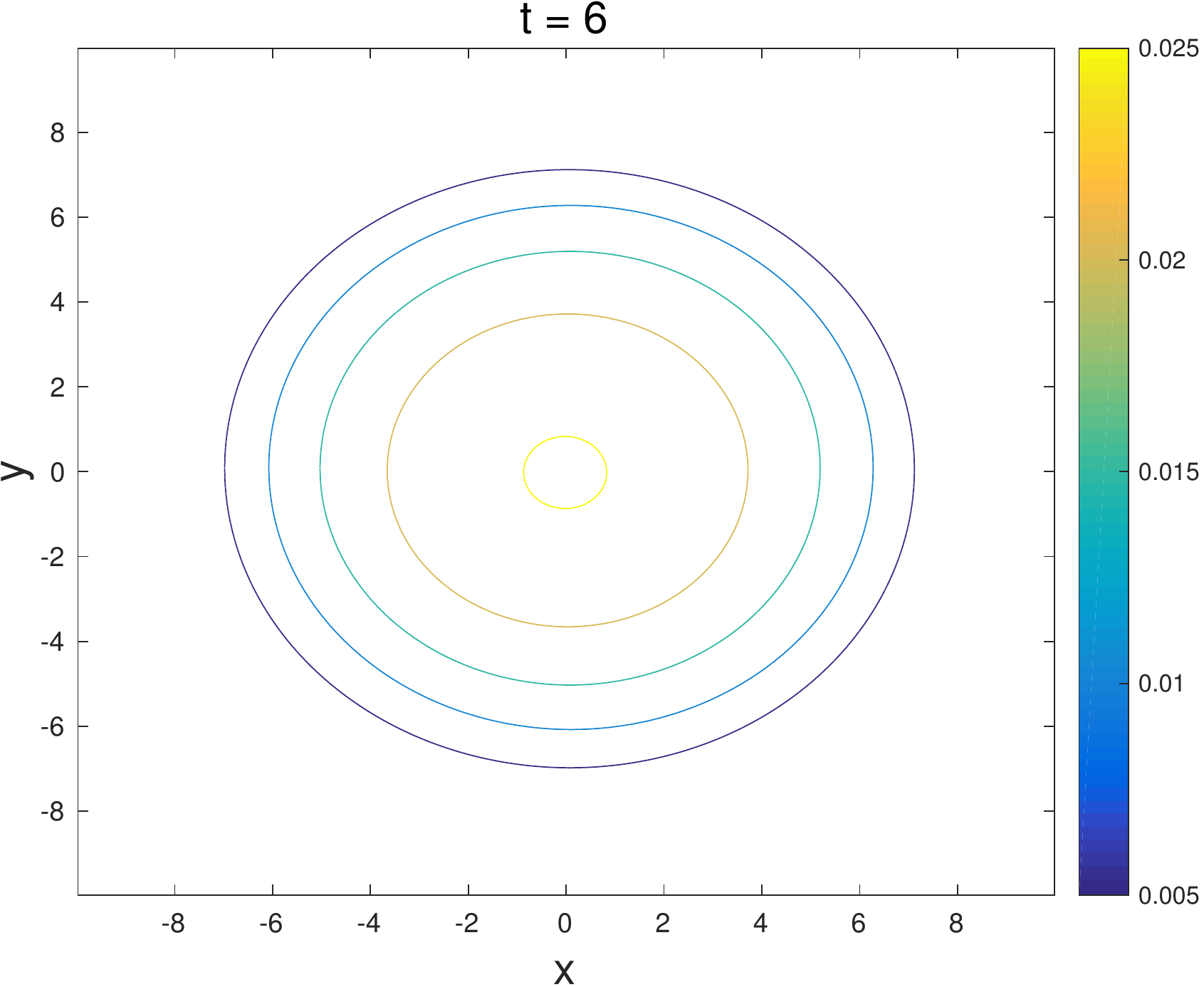}
		\end{minipage}
	}
	\caption{Multiple Species in Two-dimension: The space-concentration $c_1$ curves with both the mesh size $\Delta x$ and $\Delta y$ being 0.0390625 and the time $t = 0, 2, 4, 6$}
	\label{c1oft}
\end{figure}

\begin{figure}[htp] 
	\centering
	\subfigure{
		\begin{minipage}[t]{0.35\linewidth}
			\centering
			\includegraphics[width=1.0\linewidth]{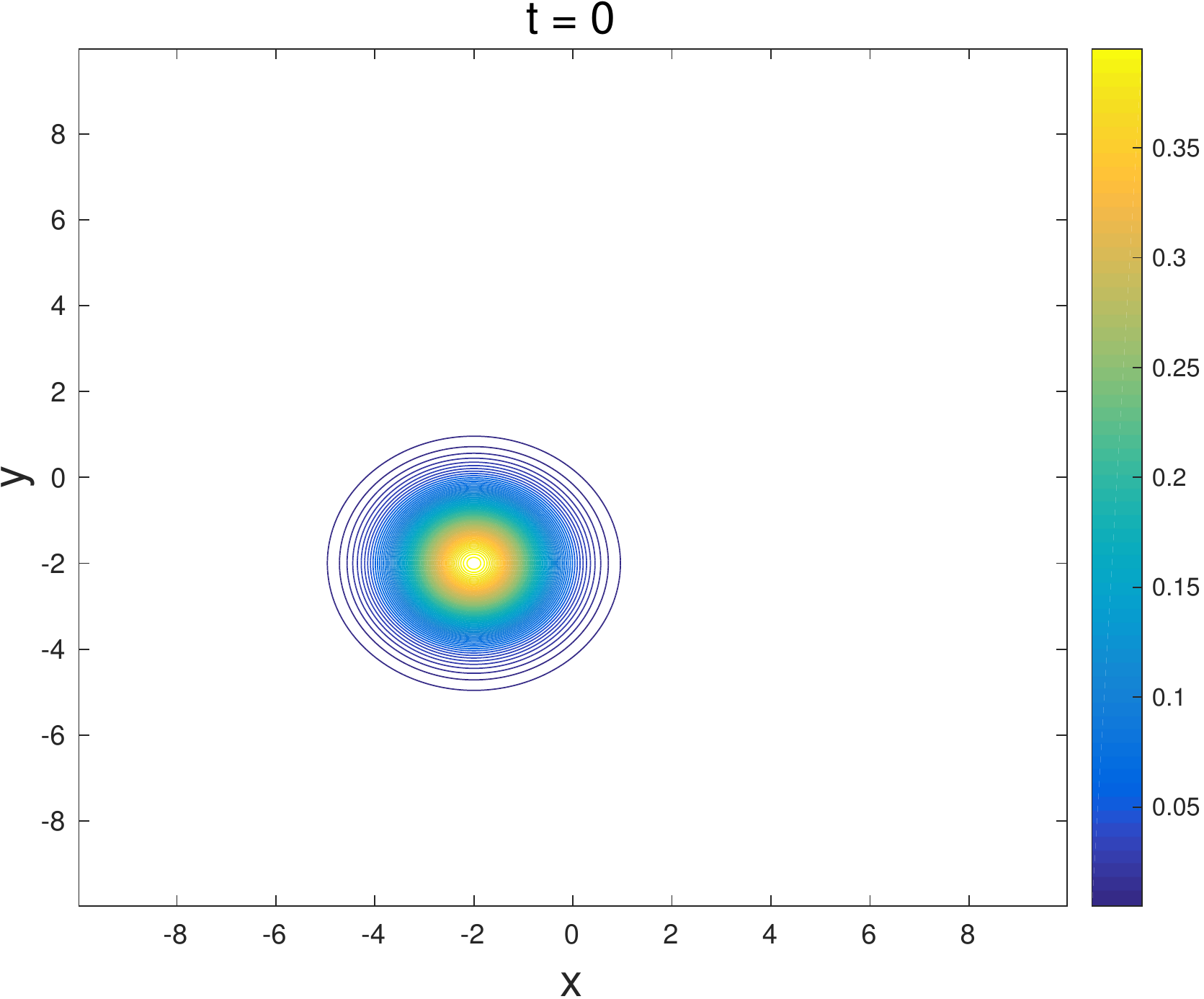}
		\end{minipage}
		\begin{minipage}[t]{0.35\linewidth}
			\centering
			\includegraphics[width=1.0\linewidth]{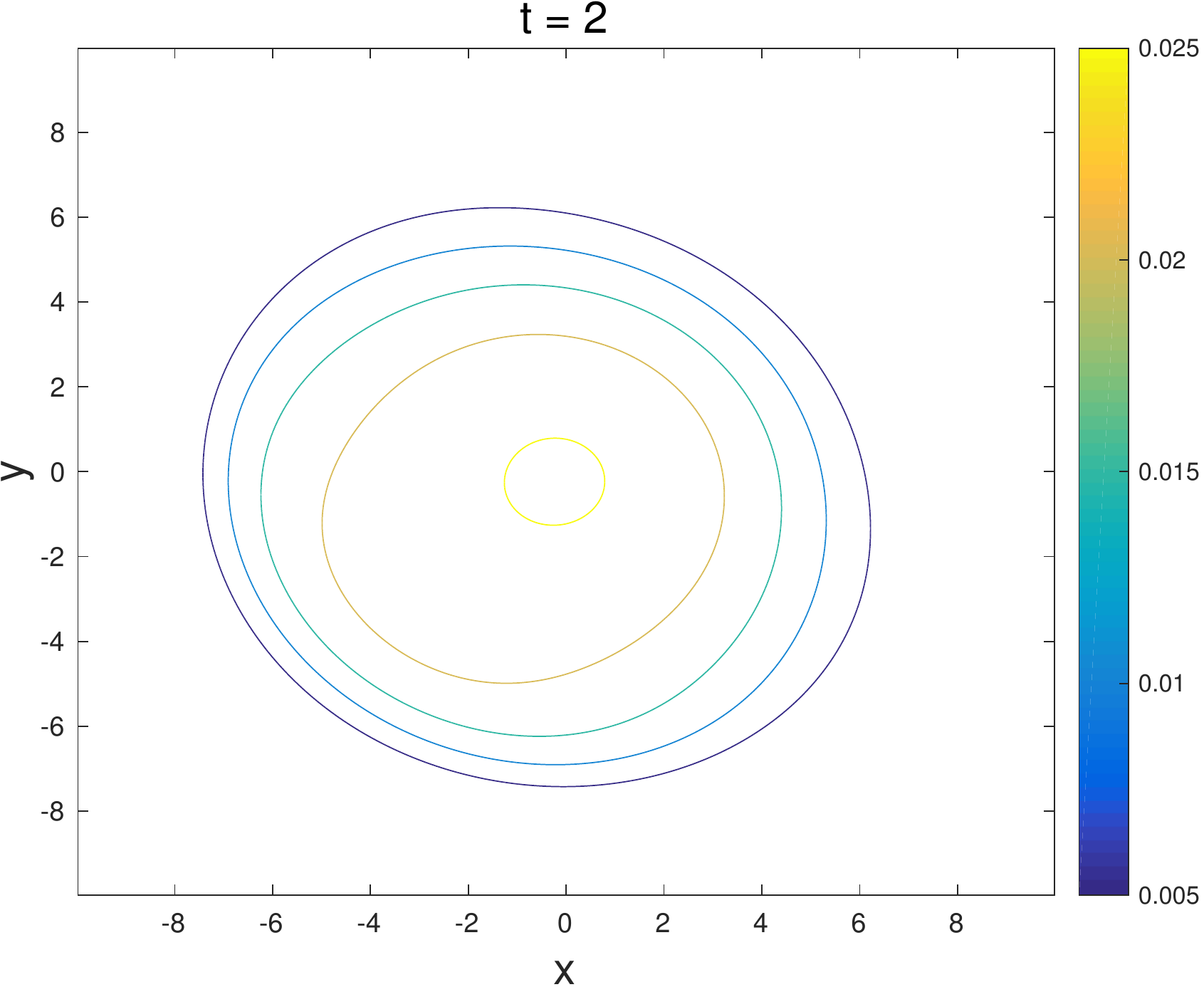}
		\end{minipage}
	}
	\subfigure{
		\begin{minipage}[t]{0.35\linewidth}
			\centering
			\includegraphics[width=1.0\linewidth]{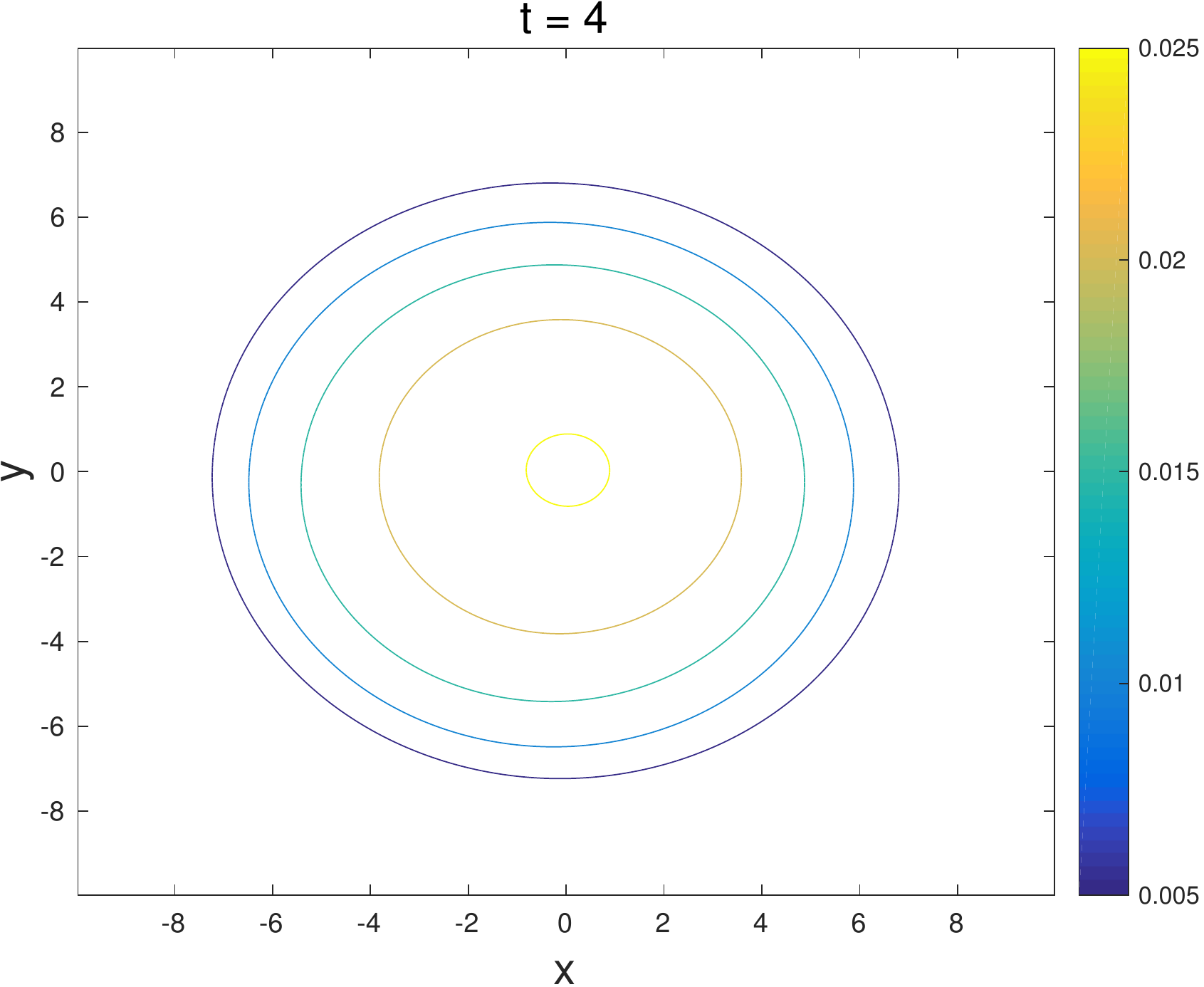}
		\end{minipage}
		\begin{minipage}[t]{0.35\linewidth}
			\centering
			\includegraphics[width=1.0\linewidth]{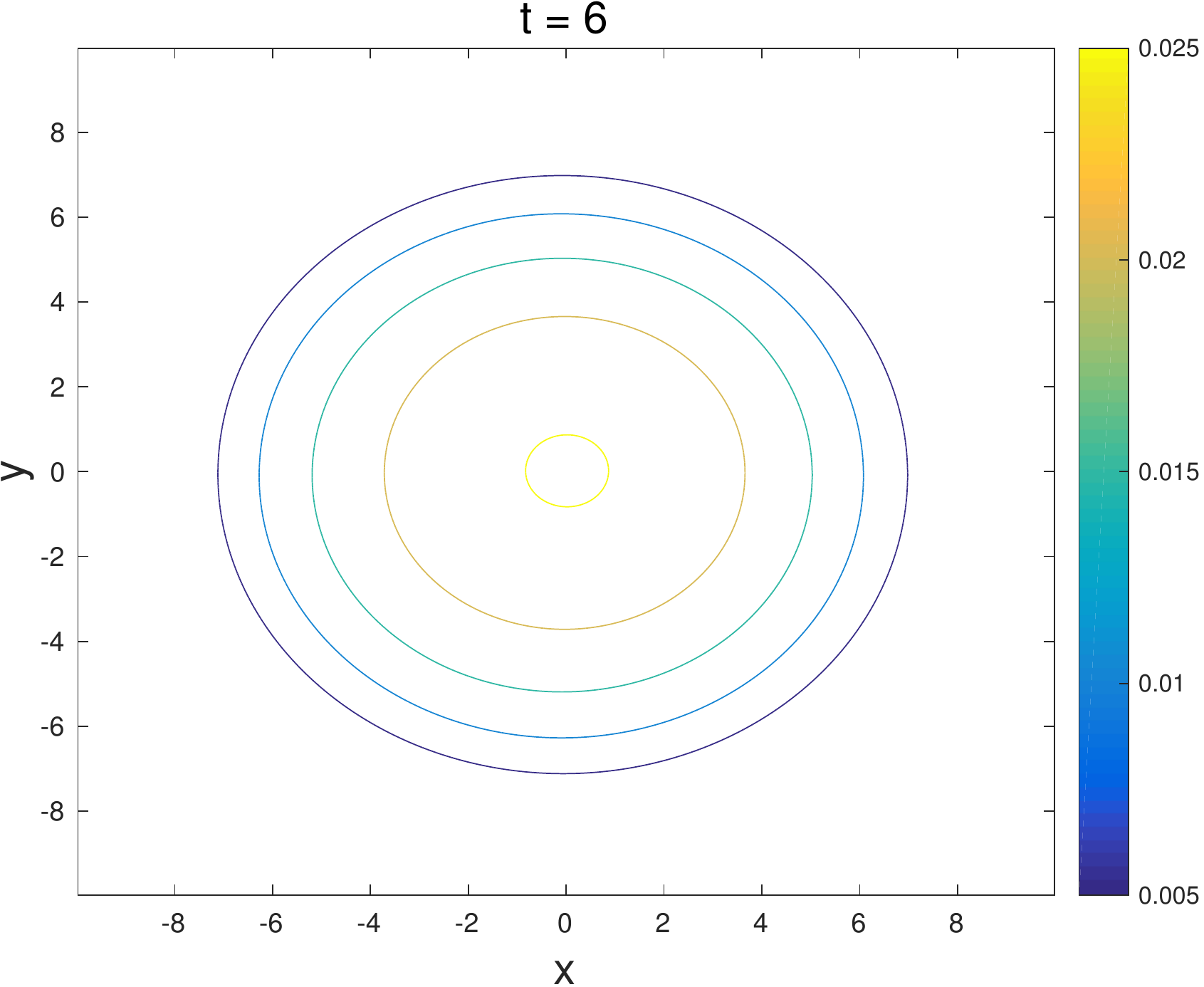}
		\end{minipage}
	}
	\caption{Multiple Species in Two-dimension: The space-concentration $c_2$ curves with both the mesh size $\Delta x$ and $\Delta y$ being 0.0390625 and the time $t = 0, 2, 4, 6$}
	\label{c2oft}
\end{figure}

\begin{figure}[htp] 
	\centering
	\subfigure{
		\includegraphics[width=0.6\linewidth]{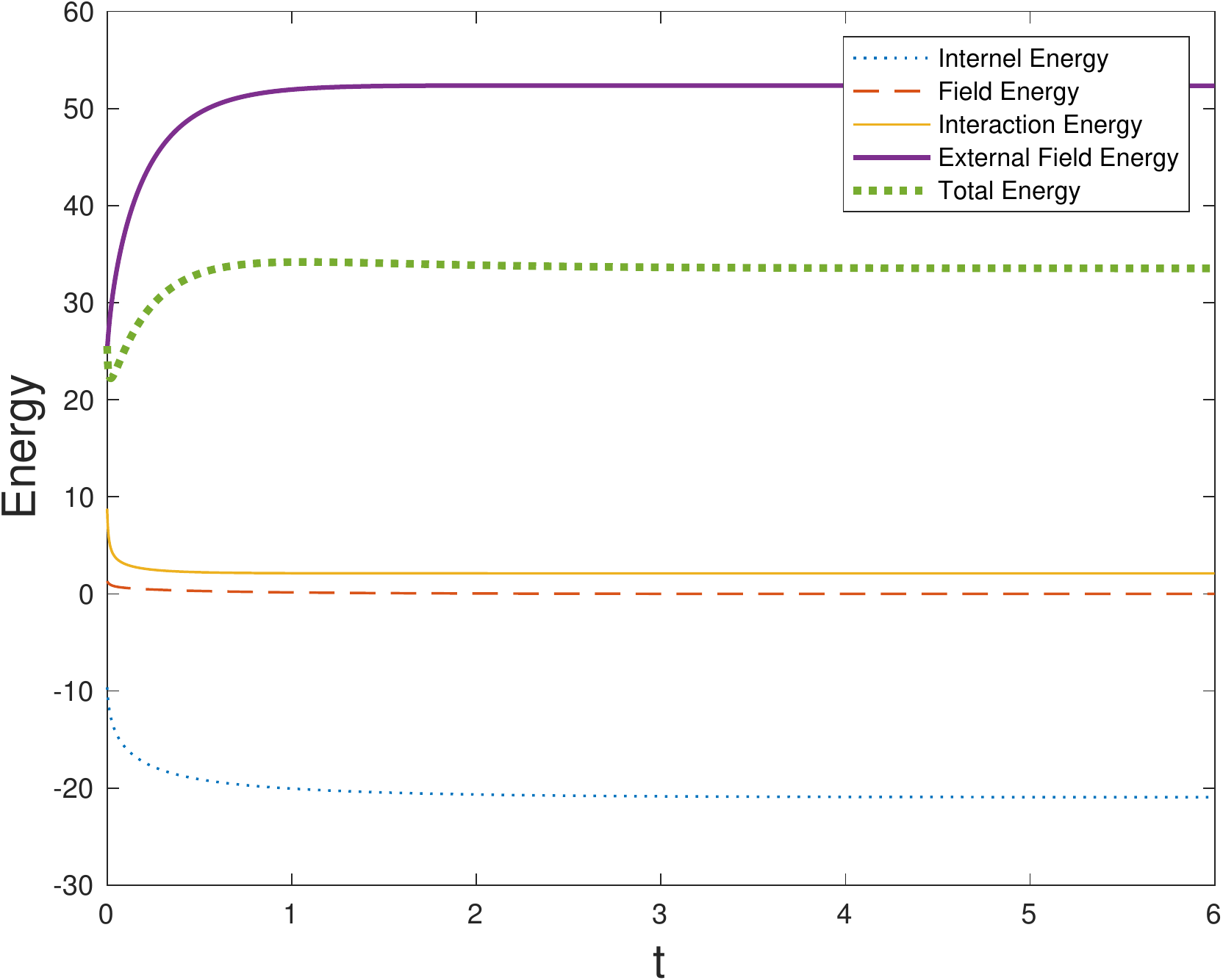}  
	}
	\caption{Multiple Species in Two-dimension: The time-energy plot of the  model (\ref{model3}) equipped with the initial conditions (\ref{iniex3}) with both the mesh size $\Delta x$ and $\Delta y$ being 0.0390625.}  
	\label{eoft2d} 
\end{figure}

\subsubsection{Finite Size Effect} 
The strength of the steric repulsion arising from the finite size is indicated by the parameter $\eta$ in the kernel $\mathcal{W}(x)$.
Let $\eta = 1, \dfrac{1}{4}, \cdots, \dfrac{1}{128}, 0$ and the mesh size $\Delta x = \Delta y = 0.1562$, 
Figure \ref{c1ofeta} shows different steady state solutions with different values of $\eta$, where we can find that the finite size effect makes the concentrations $c_m, \ m = 1, 2,$ not overly peaked. 

\begin{figure}[htp] 
	\subfigure{
		\begin{minipage}[t]{0.33\linewidth}
			\centering
			\includegraphics[width=1.0\linewidth]{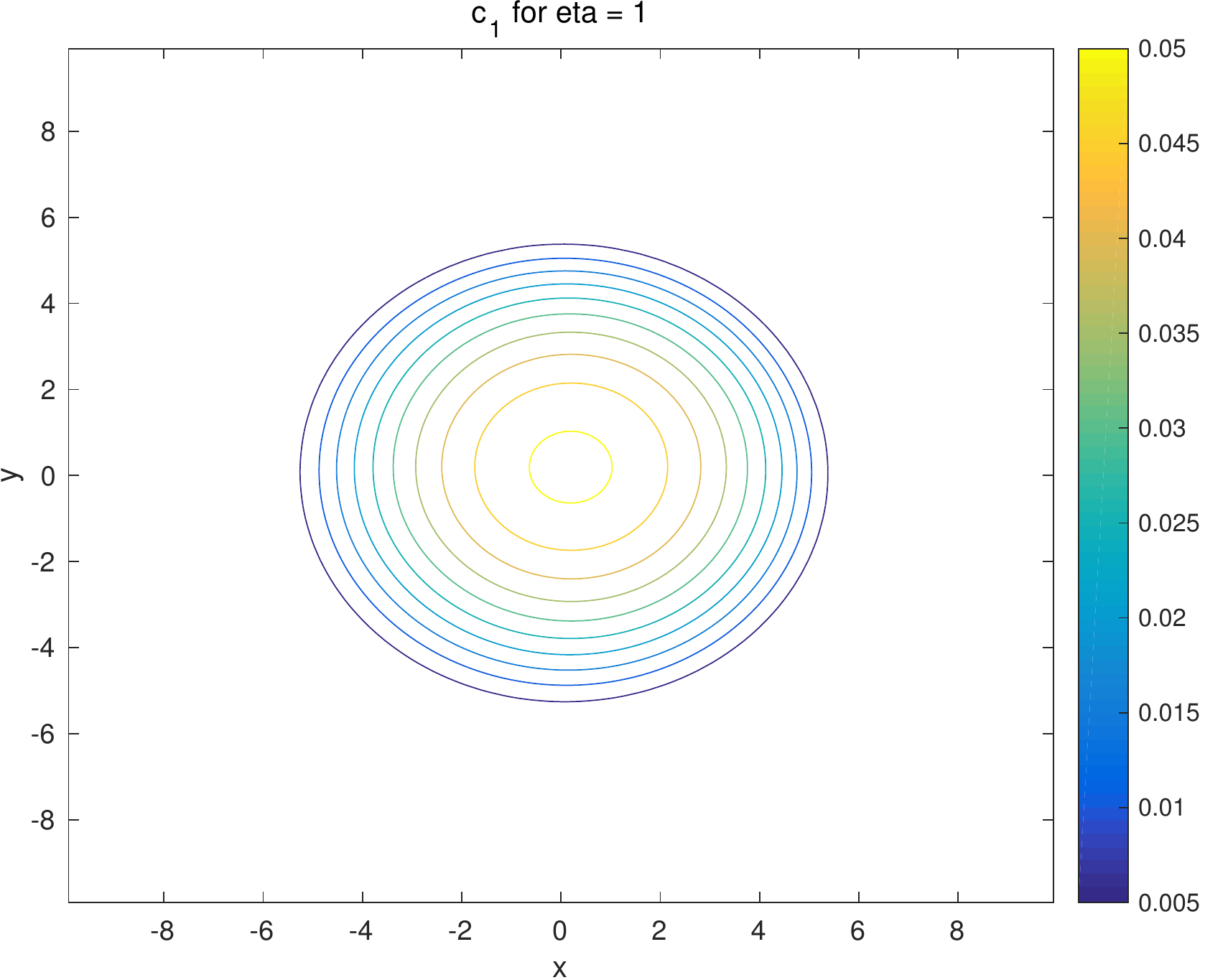}
		\end{minipage}
		\begin{minipage}[t]{0.33\linewidth}
			\centering
			\includegraphics[width=1.0\linewidth]{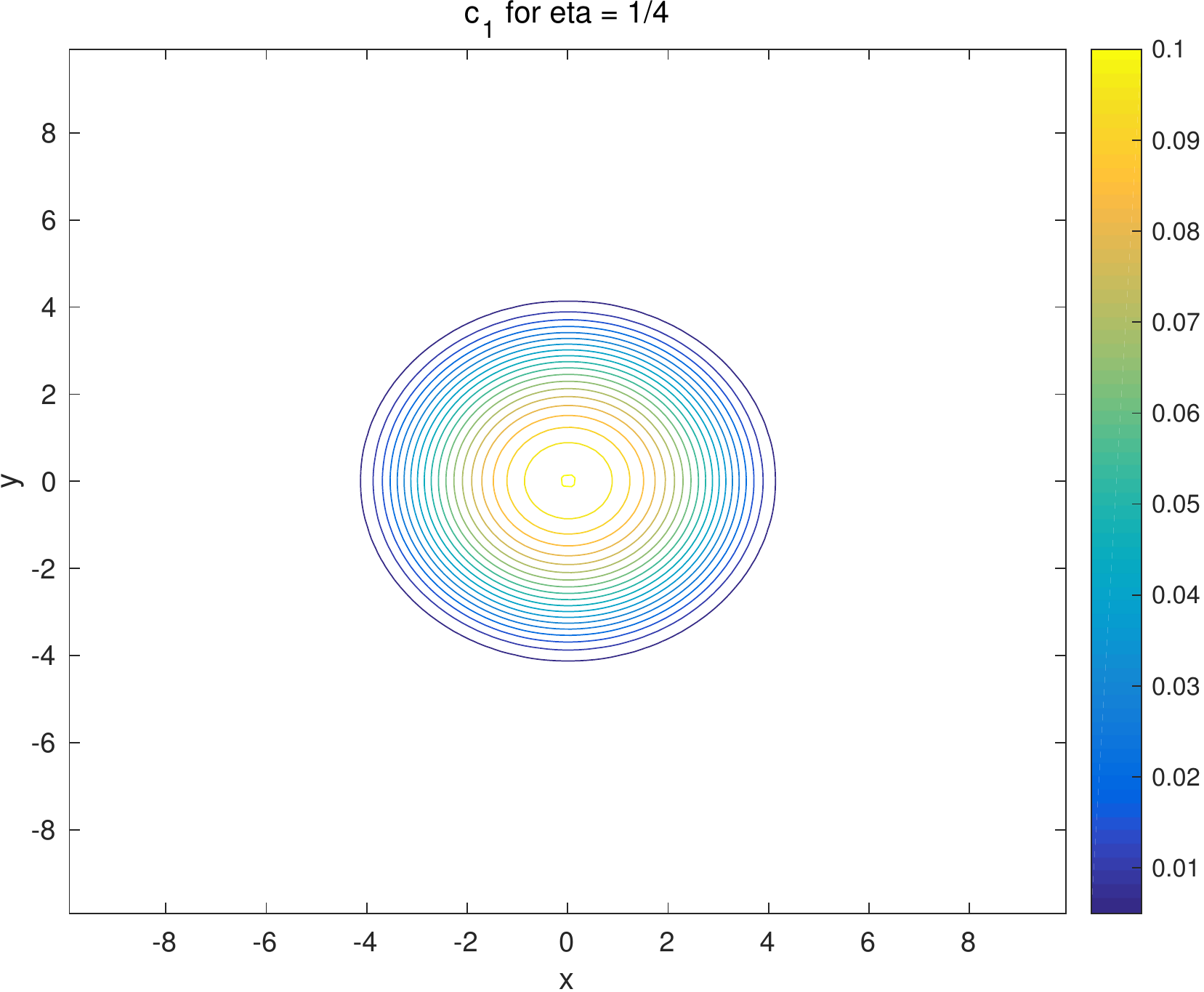}
		\end{minipage}
		\begin{minipage}[t]{0.33\linewidth}
			\centering
			\includegraphics[width=1.0\linewidth]{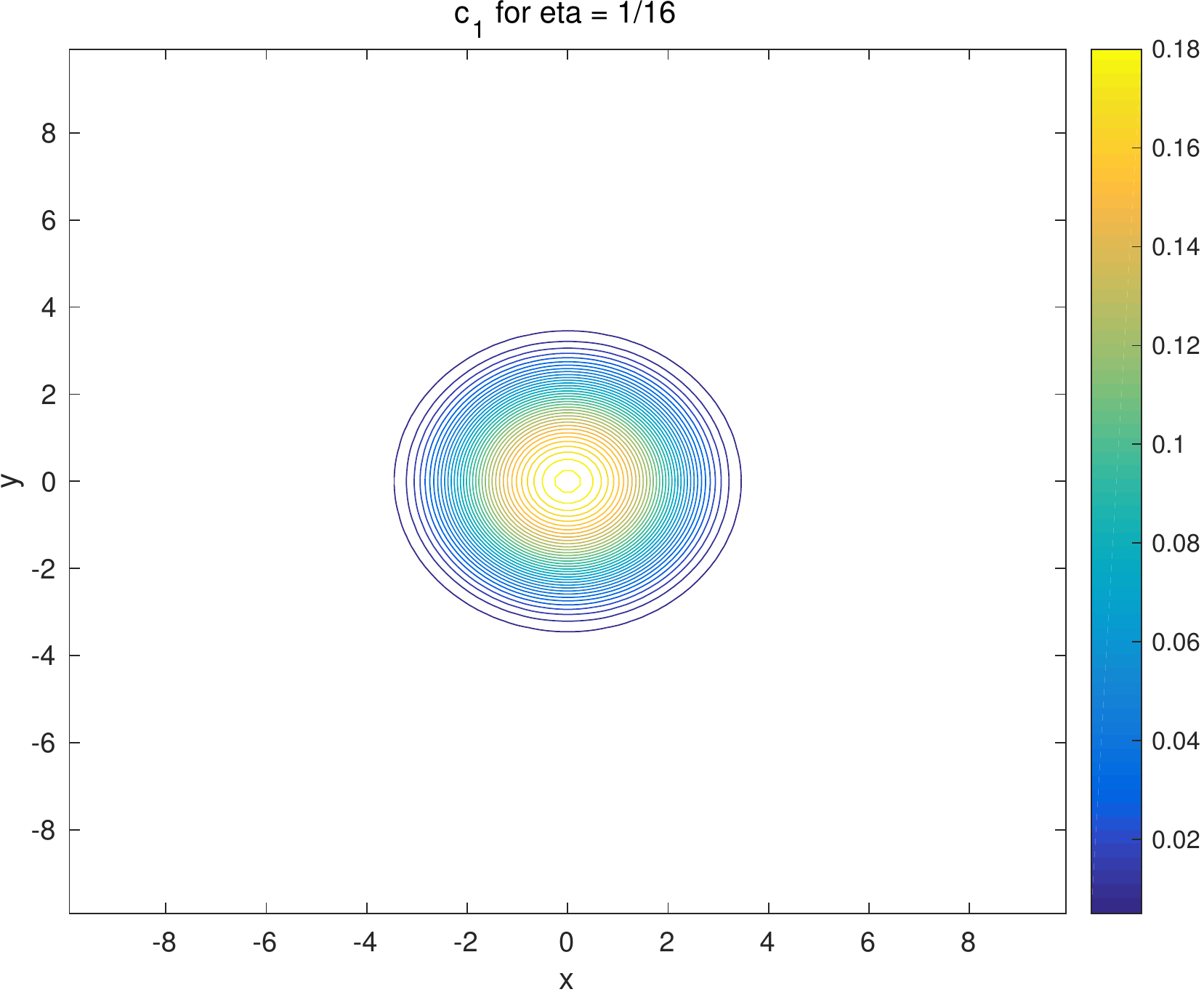}
		\end{minipage}
	}
	\subfigure{
		\begin{minipage}[t]{0.33\linewidth}
			\centering
			\includegraphics[width=1.0\linewidth]{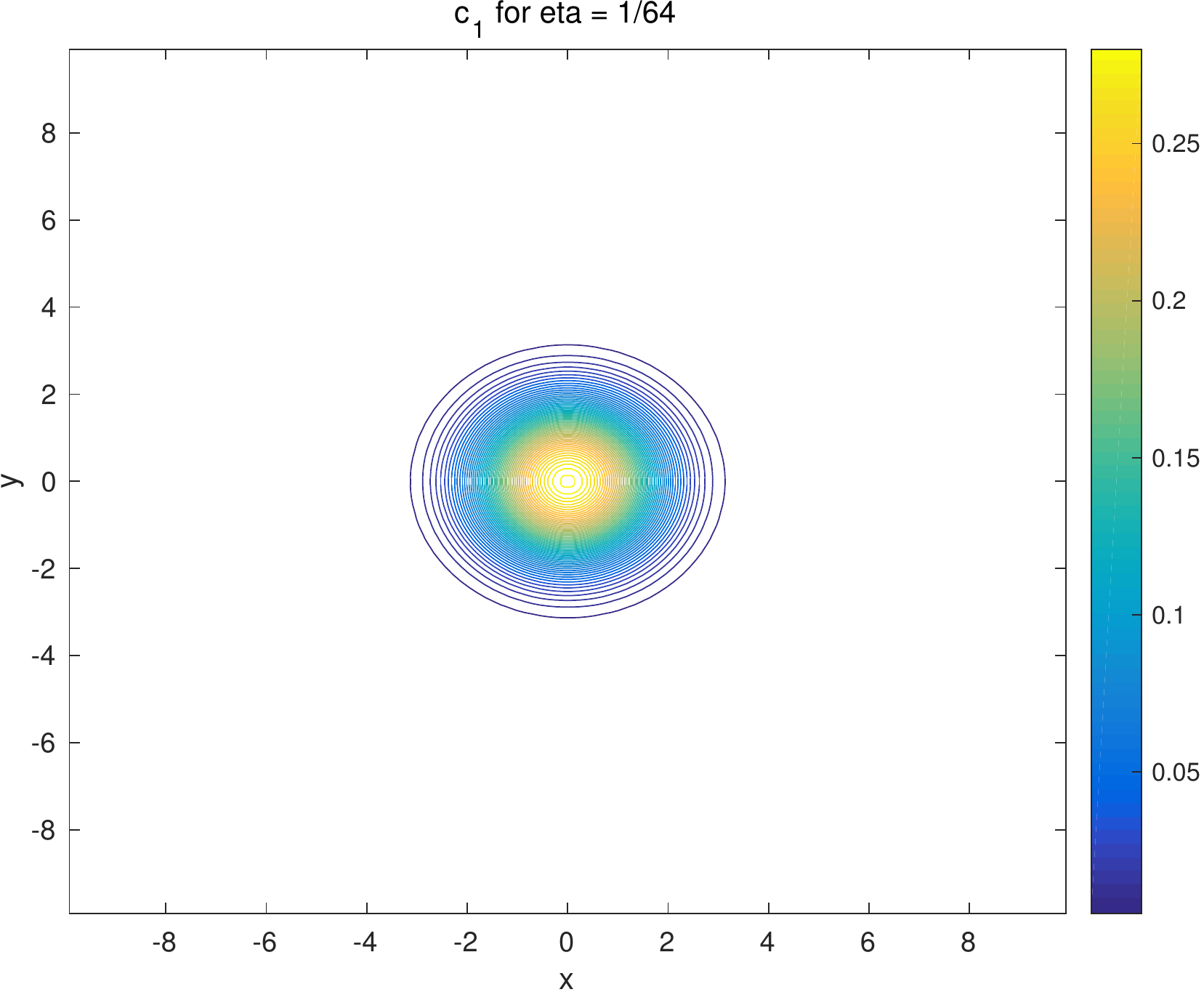}
		\end{minipage}
		\begin{minipage}[t]{0.33\linewidth}
			\centering
			\includegraphics[width=1.0\linewidth]{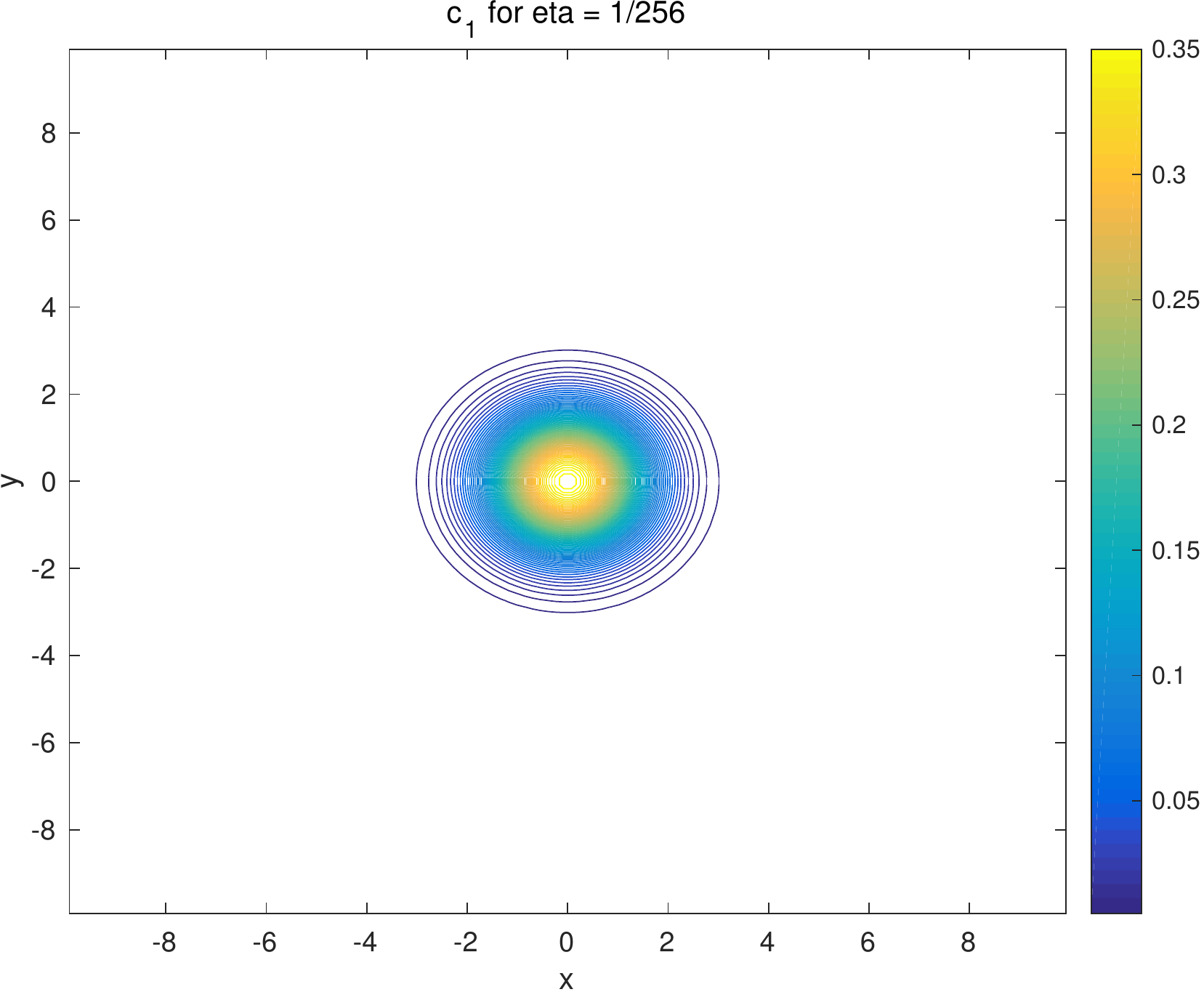}
		\end{minipage}
		\begin{minipage}[t]{0.33\linewidth}
			\centering
			\includegraphics[width=1.0\linewidth]{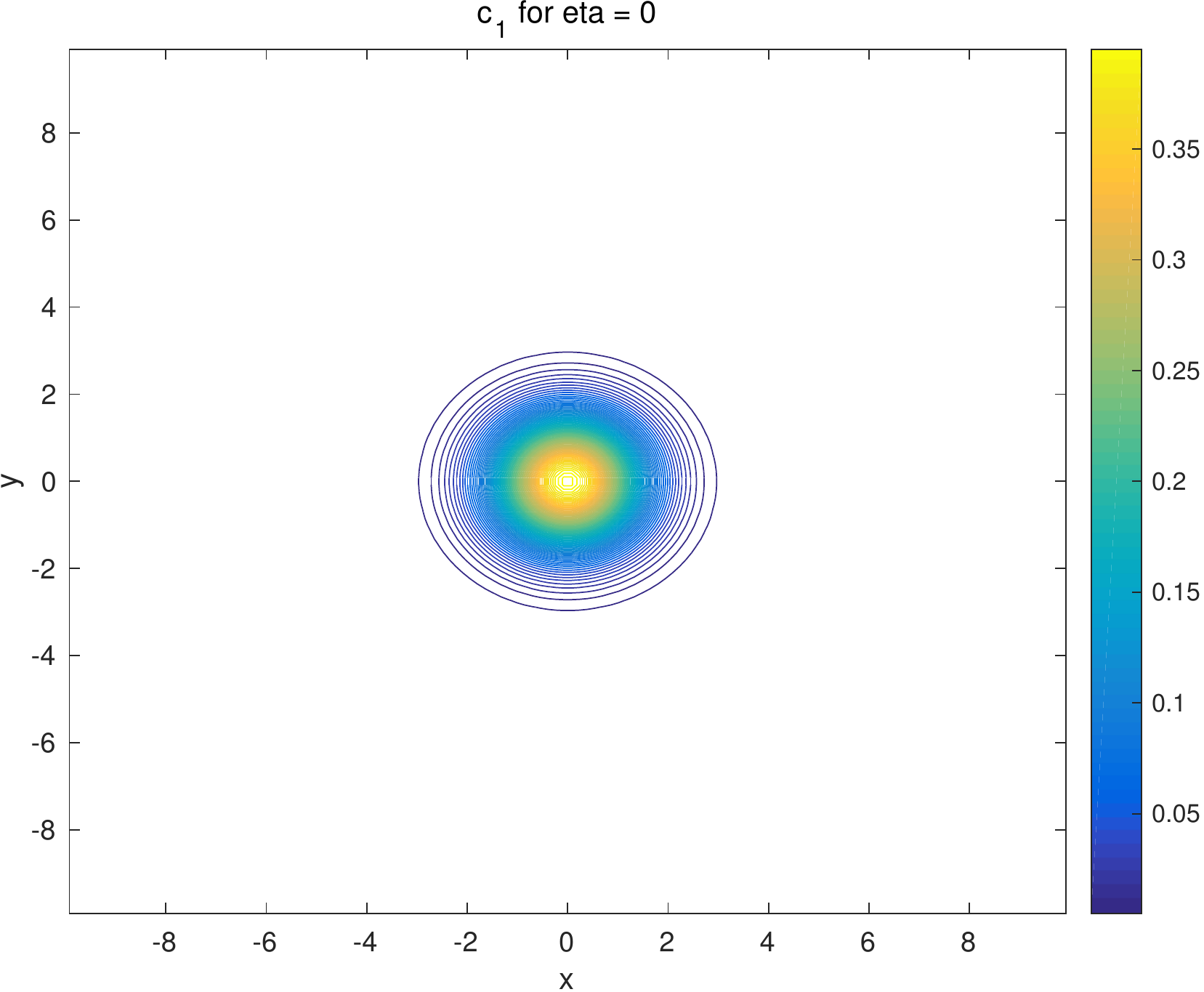}
		\end{minipage}
	}
	\caption{Multiple Species in Two-dimension: The steady state density solutions $c_1$ with different $\eta$}
	\label{c1ofeta}
\end{figure}

\subsection{Example 3} 
\subsubsection{Steady State}
Consider the equations (\ref{model3}) in one-dimension with the kernel $\mathcal{W}(x) = \frac{\eta}{x^2 + \epsilon^2}, \epsilon = \frac{1}{10}$, $\mathcal{K}(x) = \exp(-|x|)$, $V_{\text{ext}}(x) = \frac{1}{2} x^2$ and the initial conditions are given by
\begin{equation*}
\left\{
\begin{array}{lll}
c_1(x, 0) = \dfrac{1}{\sqrt{2 \pi}} \exp \left(-\dfrac{(x - 2)^2}{2} \right) &\text{with} &z_1 = 1, \\
c_2(x, 0) = \dfrac{1}{\sqrt{2 \pi}} \exp \left(-\dfrac{(x + 2)^2}{2} \right) &\text{with} &z_2 = -1.
\end{array}
\right.
\end{equation*}
In addition, we add a constant electric field whose field intensity is 2 to the solutions to observe the behavior of the ionic species, i.e. the field system (\ref{model3})-(\ref{model4}) becomes
\begin{equation}
\begin{aligned}
\partial_t c_m (x, t) &= \nabla \cdot \left[ c_m \nabla \left(1 + \log c_m + z_m \mathcal{K}*\rho + \mathcal{W}*\theta + V_{\text{ext}} + z_m V_{\text{electric field}} \right) \right], 
\\
c_m(x, 0) &= c^0_{m}(x), ~~m = 1, \cdots, M,
\end{aligned}
\end{equation}
where $V_{\text{electric field}} = 2 x$.

Here, take $\eta = 1$, the computation domain as $[-2L, 2L], \ L = 10$, where we are concerned with the results on domain $[-L, L]$ and the mesh size $\Delta x = 0.0390625\ (N = 2^{10})$, Figure \ref{ex3c} shows how the concentrations $c_m, m = 1, 2$, change with time $t$. For positive electric charges, the velocity concerned with the constant electric field $v_0 = -\partial_{x} (z_m V_{\text{electric field}}) = -2 z_m< 0$, which means the positively charged ions are driven towards the left boundary while the negative electric charges are driven towards the right boundary.

\begin{figure}[htp]
	\subfigure{
		\begin{minipage}[t]{0.33\linewidth}
			\centering
			\includegraphics[width=1.0\linewidth]{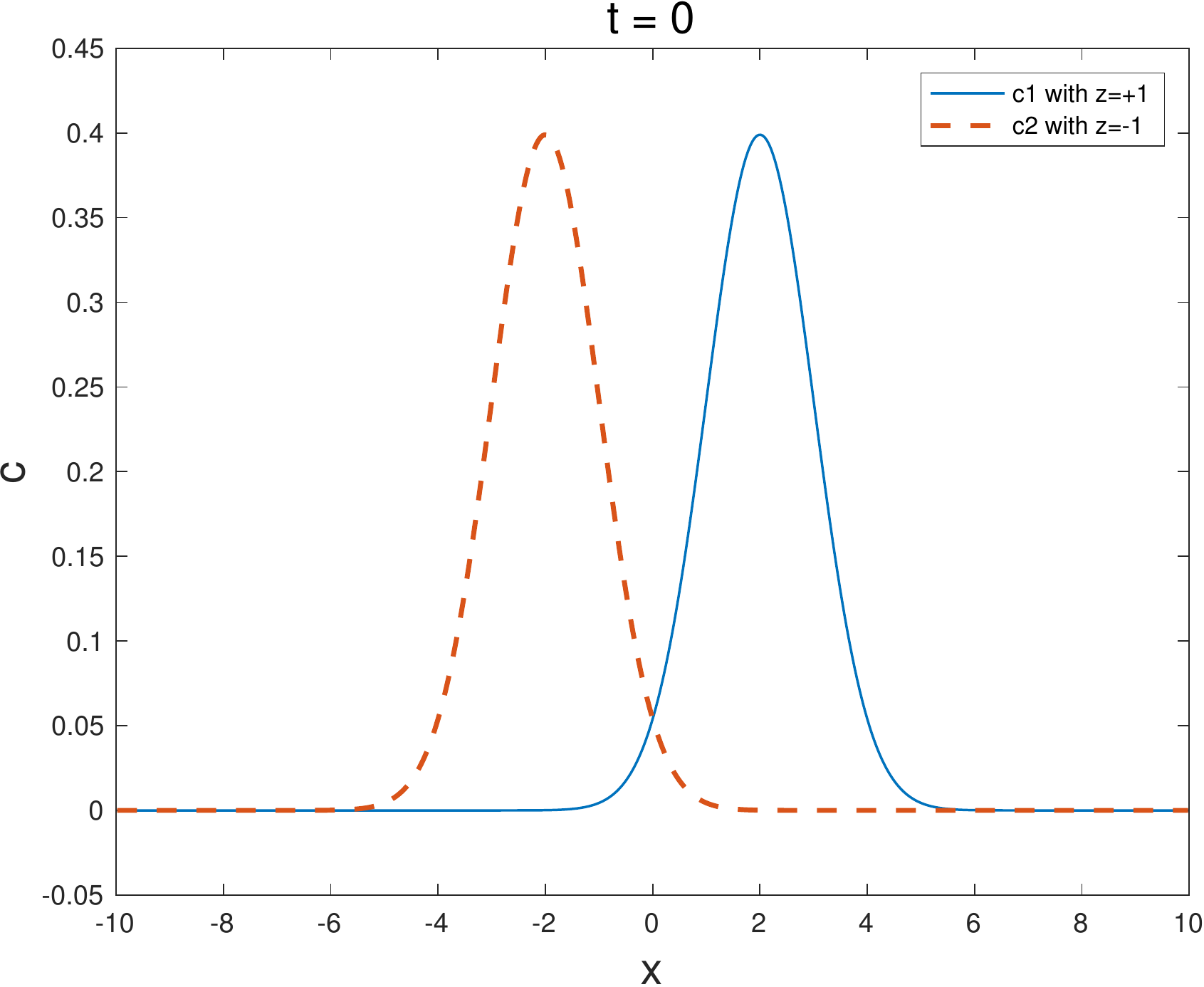}
		\end{minipage}
		\begin{minipage}[t]{0.33\linewidth}
			\centering
			\includegraphics[width=1.0\linewidth]{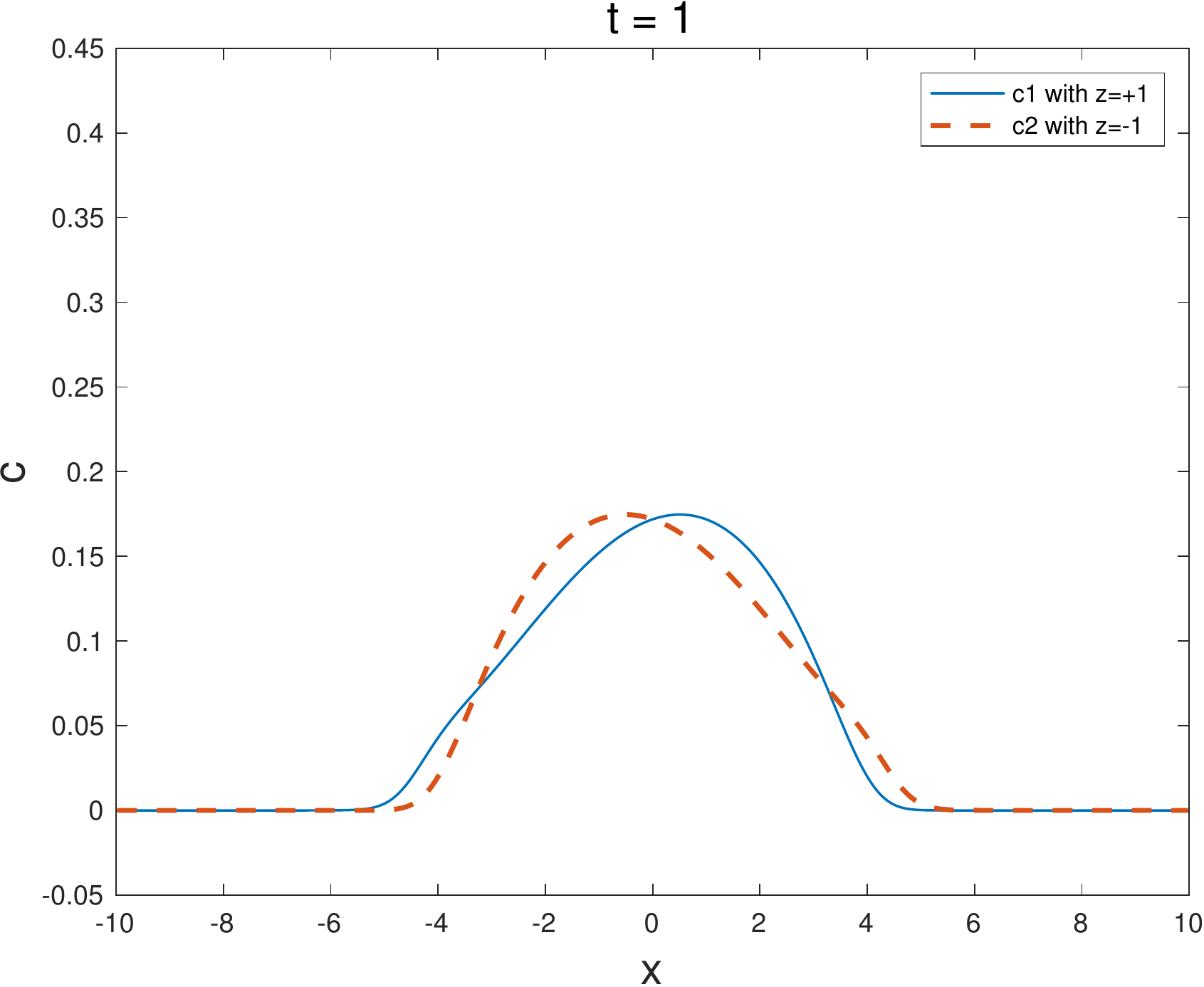}
		\end{minipage}
		\begin{minipage}[t]{0.33\linewidth}
			\centering
			\includegraphics[width=1.0\linewidth]{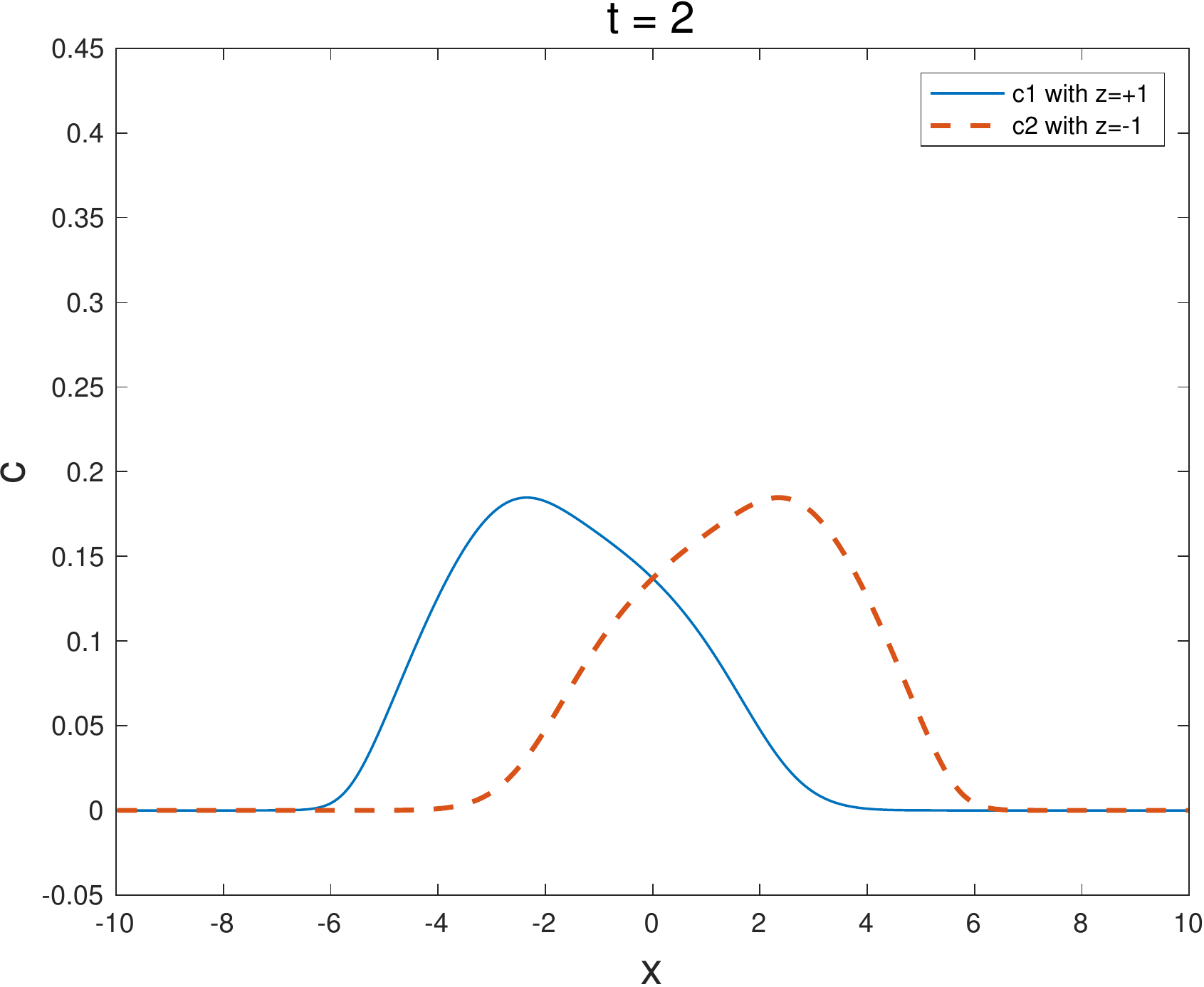}
		\end{minipage}
	}
	\subfigure{
		\begin{minipage}[t]{0.33\linewidth}
			\centering
			\includegraphics[width=1.0\linewidth]{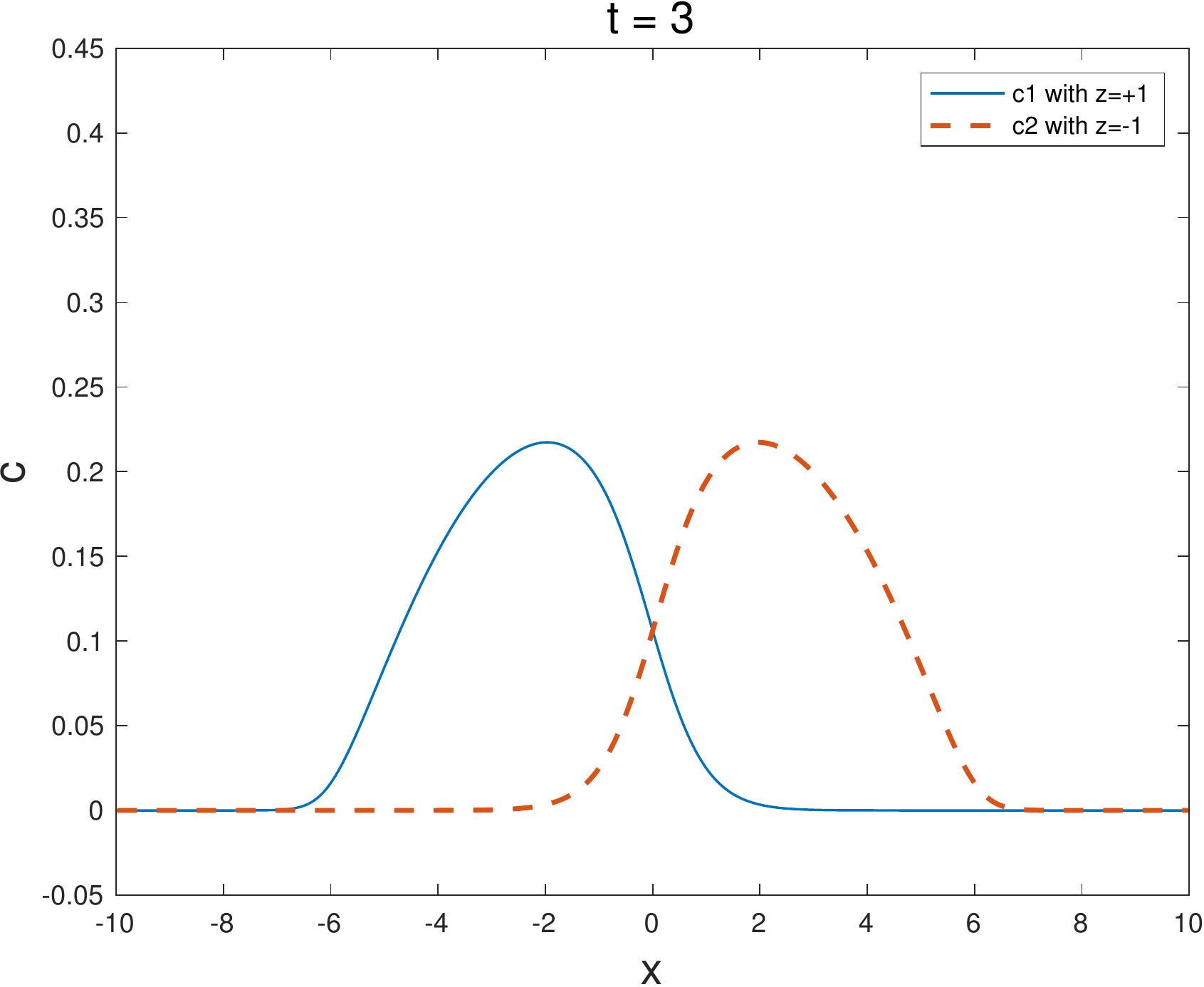}
		\end{minipage}
		\begin{minipage}[t]{0.33\linewidth}
			\centering
			\includegraphics[width=1.0\linewidth]{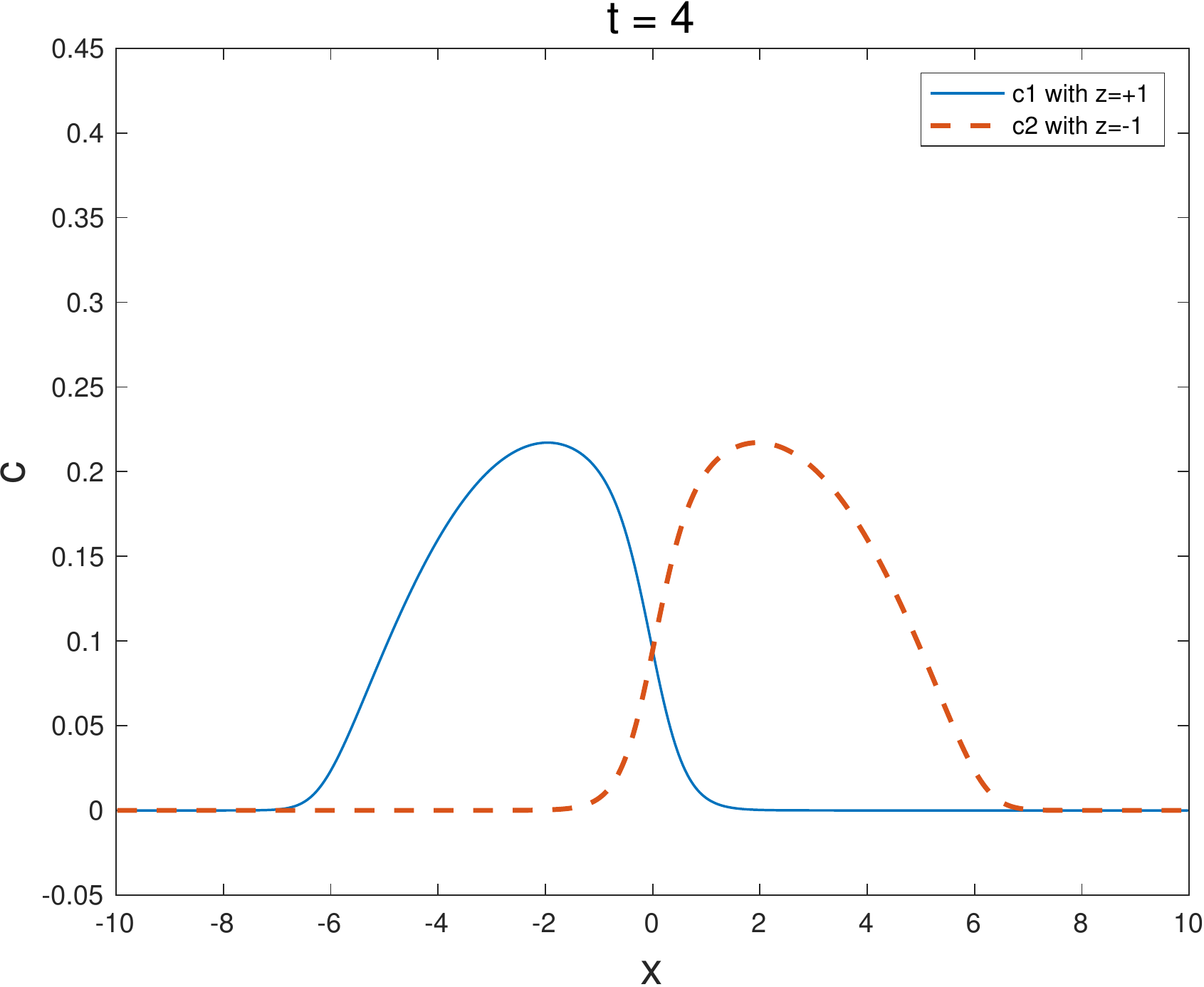}
		\end{minipage}
		\begin{minipage}[t]{0.33\linewidth}
			\centering
			\includegraphics[width=1.0\linewidth]{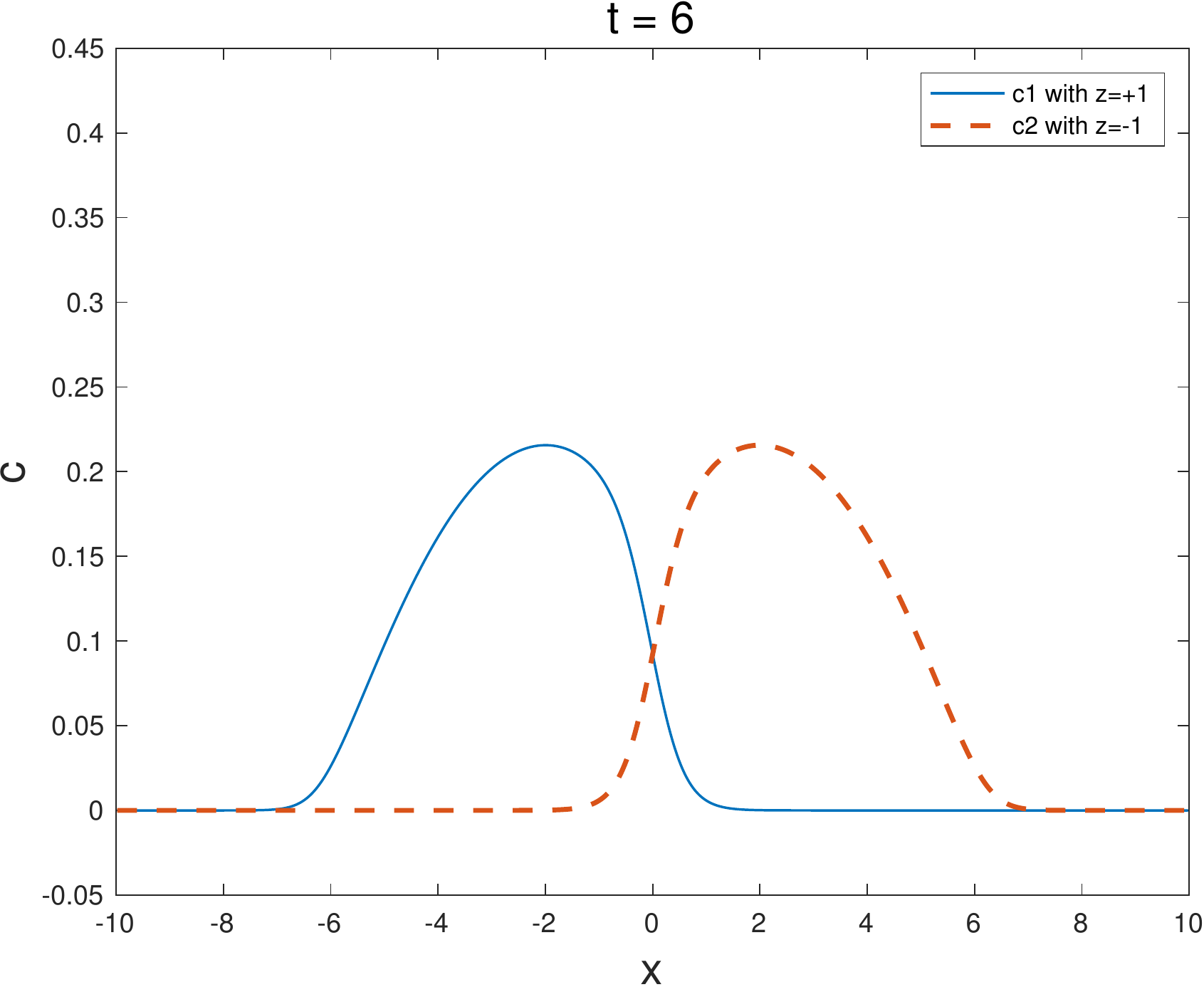}
		\end{minipage}
	}
	\subfigure{
		\begin{minipage}[t]{0.33\linewidth}
			\centering
			\includegraphics[width=1.0\linewidth]{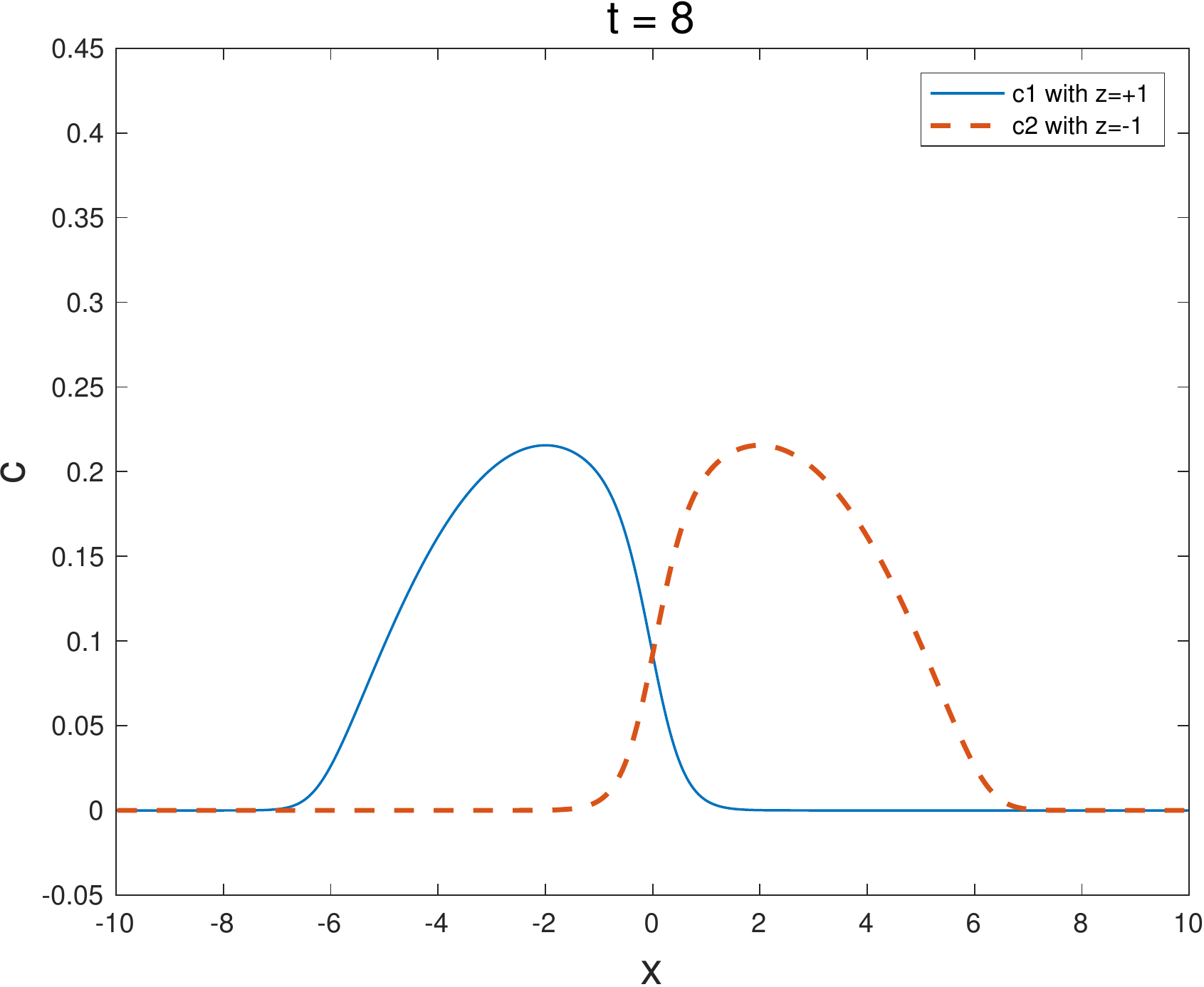}
		\end{minipage}
		\begin{minipage}[t]{0.33\linewidth}
			\centering
			\includegraphics[width=1.0\linewidth]{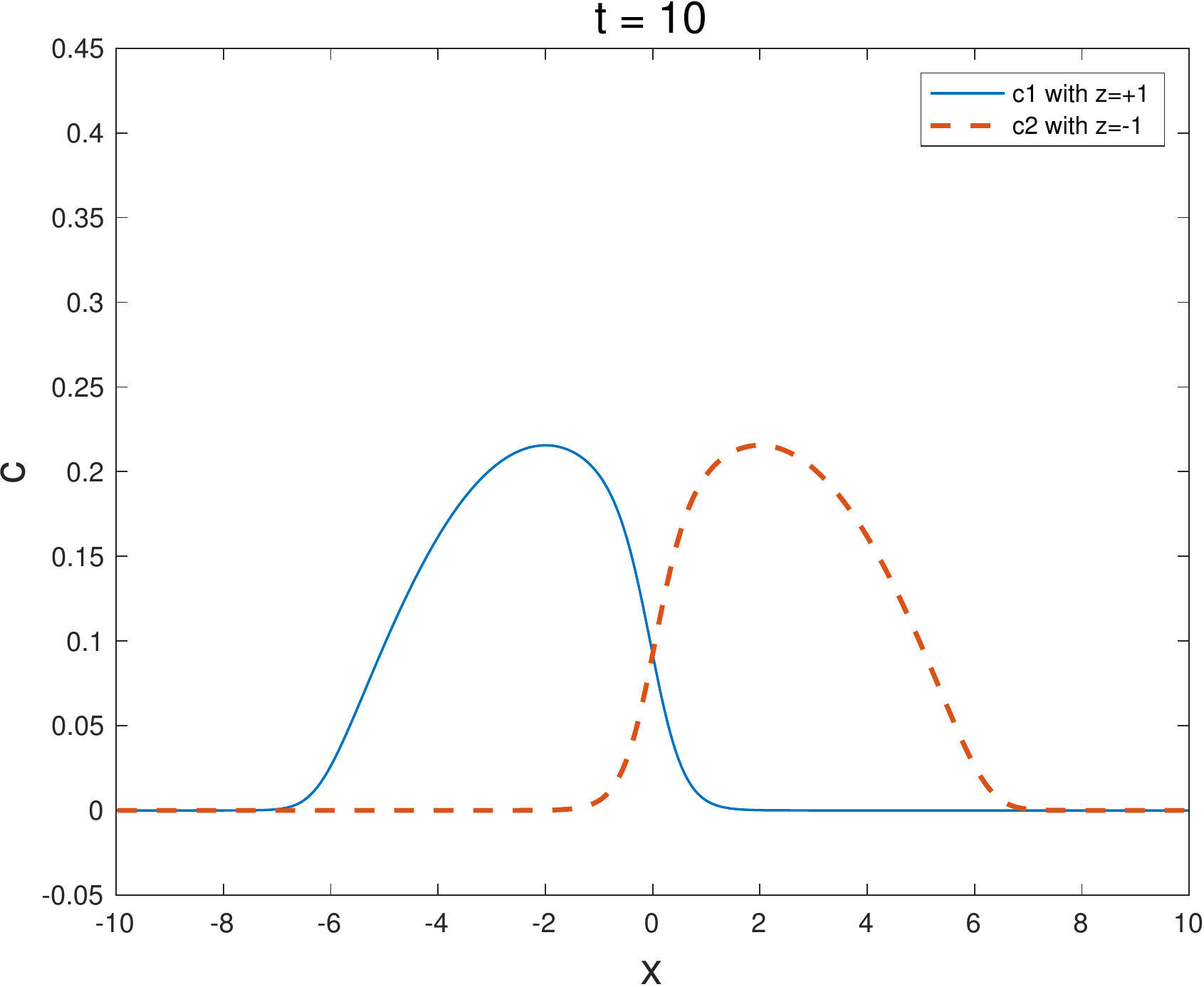}
		\end{minipage}
		\begin{minipage}[t]{0.33\linewidth}
			\centering
			\includegraphics[width=1.0\linewidth]{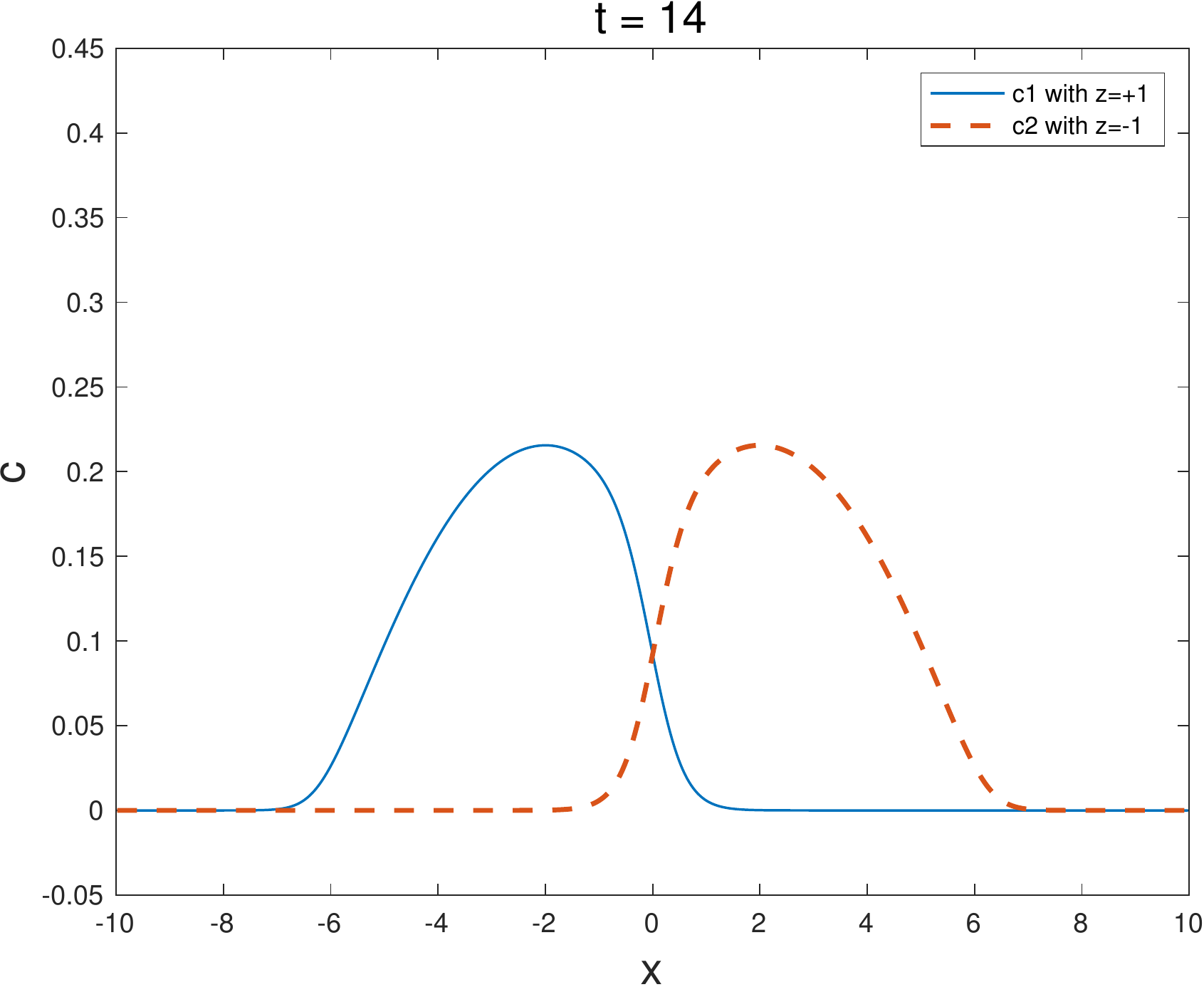}
		\end{minipage}
	}
	\caption{Example 3: The space-concentration $c_m$ curves with the mesh size $\Delta x$  being 0.0390625 and the time $t$ changing from 0 to 14}
	\label{ex3c}
\end{figure}

\subsubsection{Finite Size Effect} 
Next we aim to investigate this phenomenon numerically that such electric field can make positive and negative electric charges gather on different ends and how the nonlocal repulsion modifies the profile of the steady states. Let $\eta = \dfrac{1}{2}, \dfrac{1}{4}, \cdots, \dfrac{1}{128}, 0$ and the mesh size $\Delta x = 0.0390625$, Figure \ref{ex3eta} shows different steady state solutions with different $\eta$, here the density solution of time $t = 14$ approximates steady state solution. In conclusion, it's observed that the positive and negative particles move in different directions and the finite size effect makes the concentrations $c_m, m = 1, 2$, not overly peaked like before.

\begin{figure}[htp]
	\subfigure{
		\begin{minipage}[t]{0.33\linewidth}
			\centering
			\includegraphics[width=1.0\linewidth]{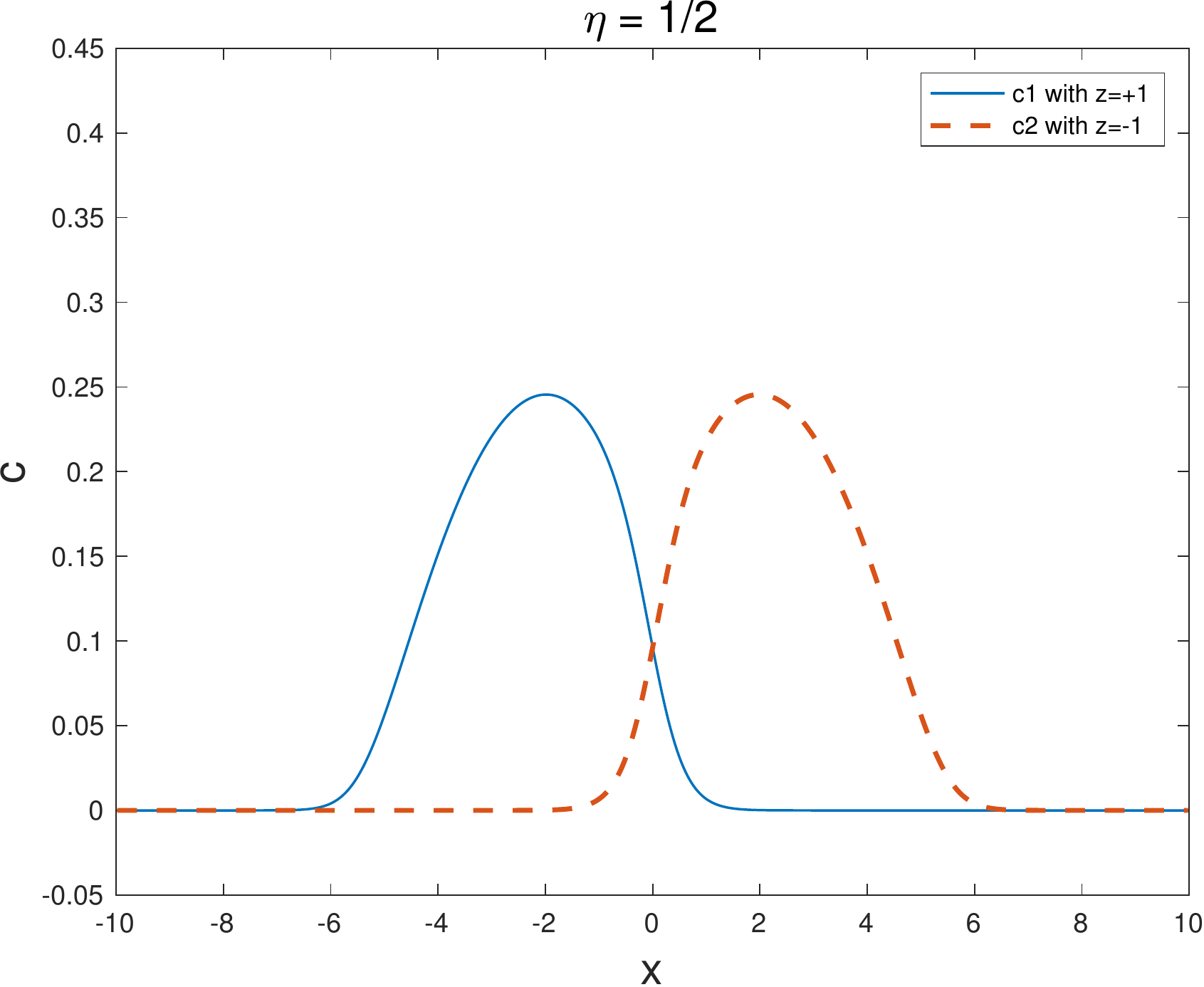}
		\end{minipage}
		\begin{minipage}[t]{0.33\linewidth}
			\centering
			\includegraphics[width=1.0\linewidth]{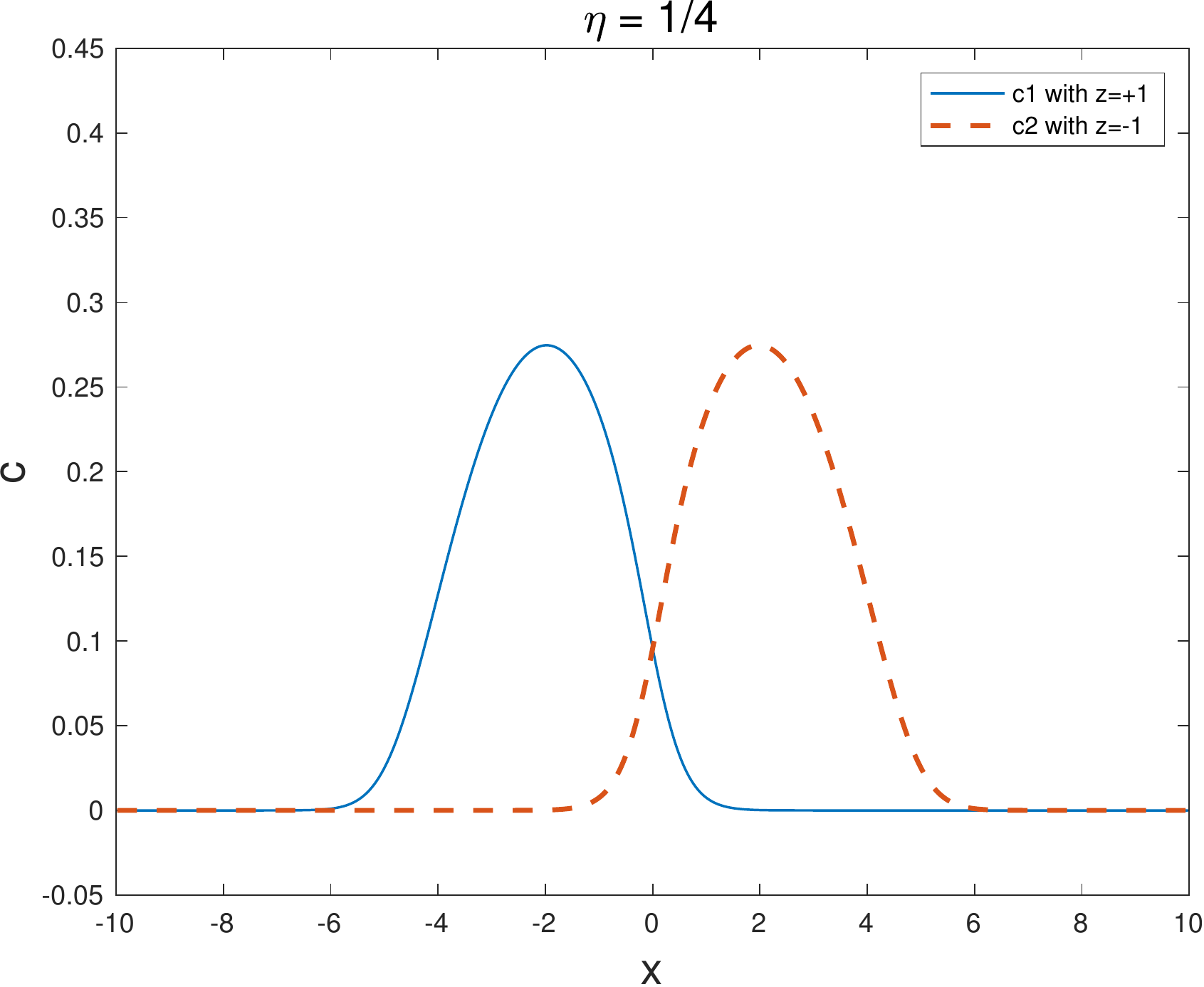}
		\end{minipage}
		\begin{minipage}[t]{0.33\linewidth}
			\centering
			\includegraphics[width=1.0\linewidth]{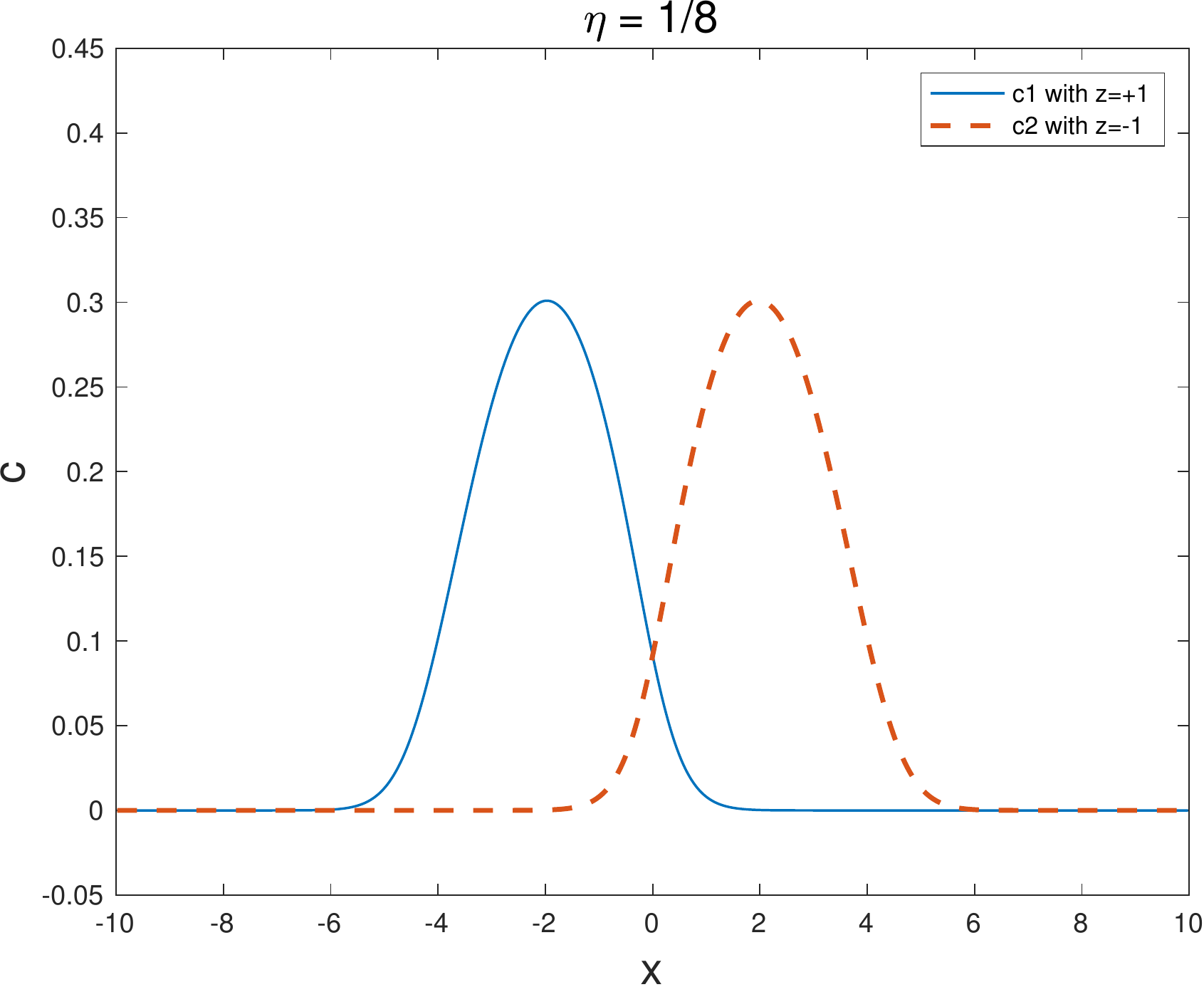}
		\end{minipage}
	}
	\subfigure{
		\begin{minipage}[t]{0.33\linewidth}
			\centering
			\includegraphics[width=1.0\linewidth]{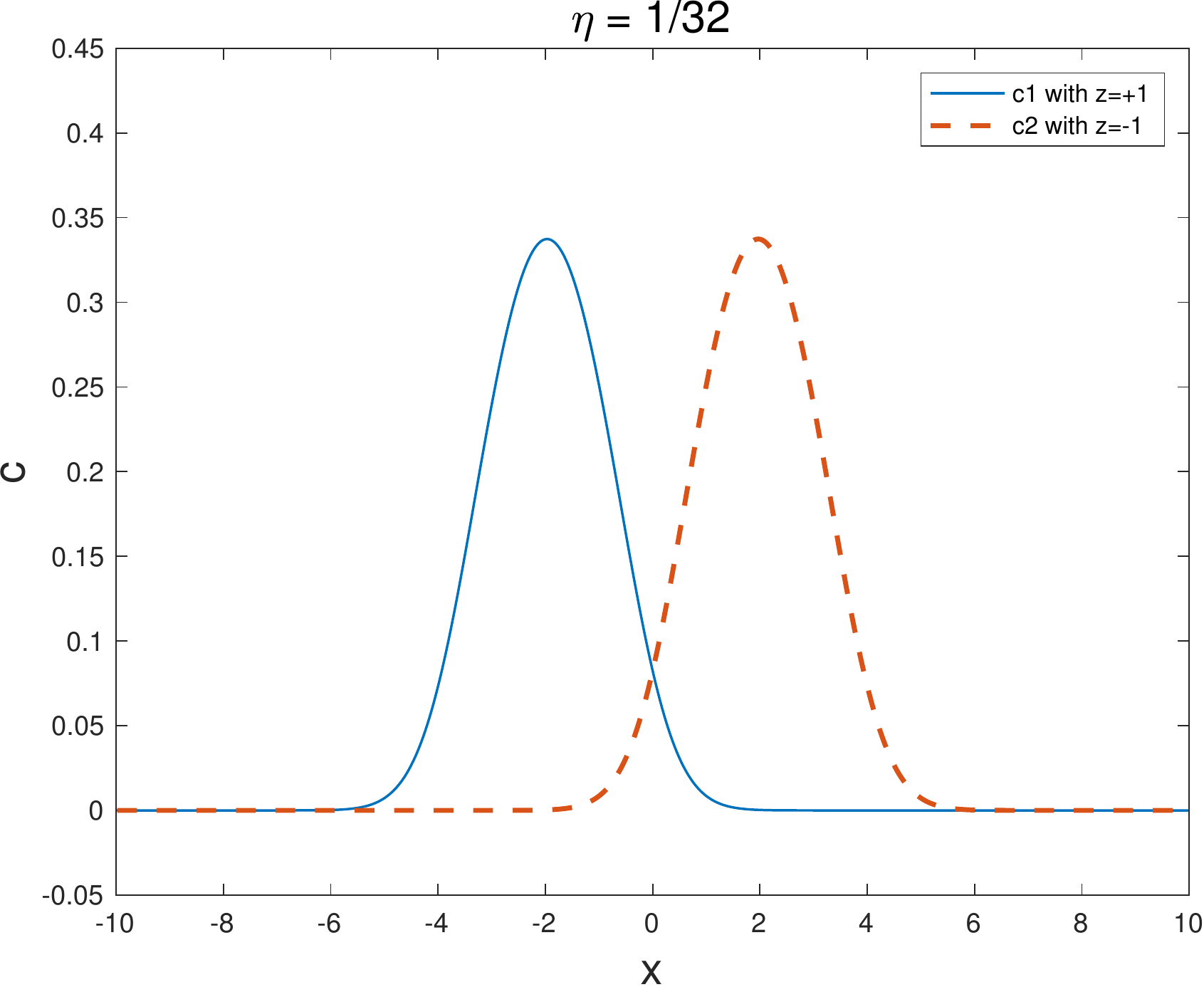}
		\end{minipage}
		\begin{minipage}[t]{0.33\linewidth}
			\centering
			\includegraphics[width=1.0\linewidth]{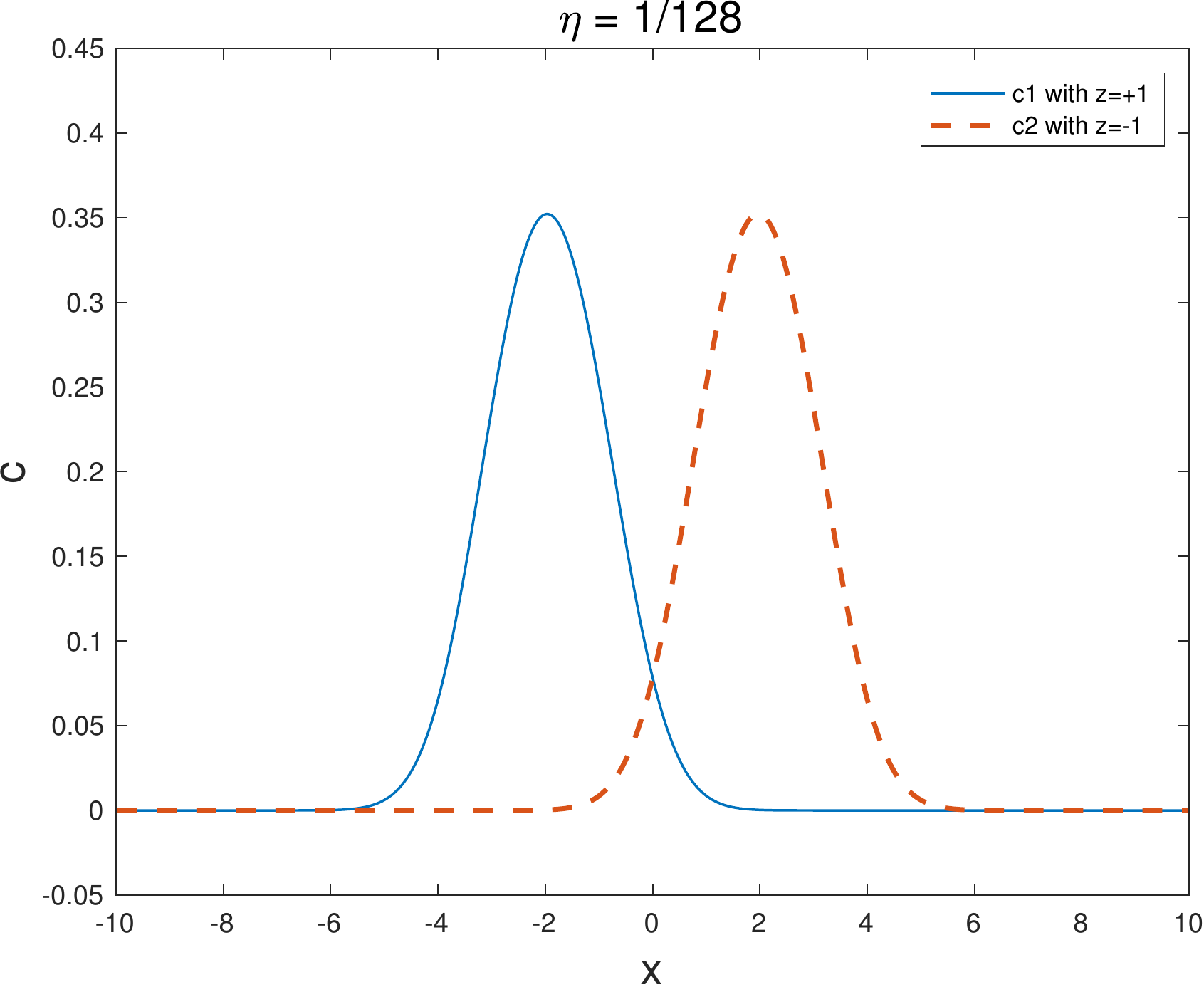}
		\end{minipage}
		\begin{minipage}[t]{0.33\linewidth}
			\centering
			\includegraphics[width=1.0\linewidth]{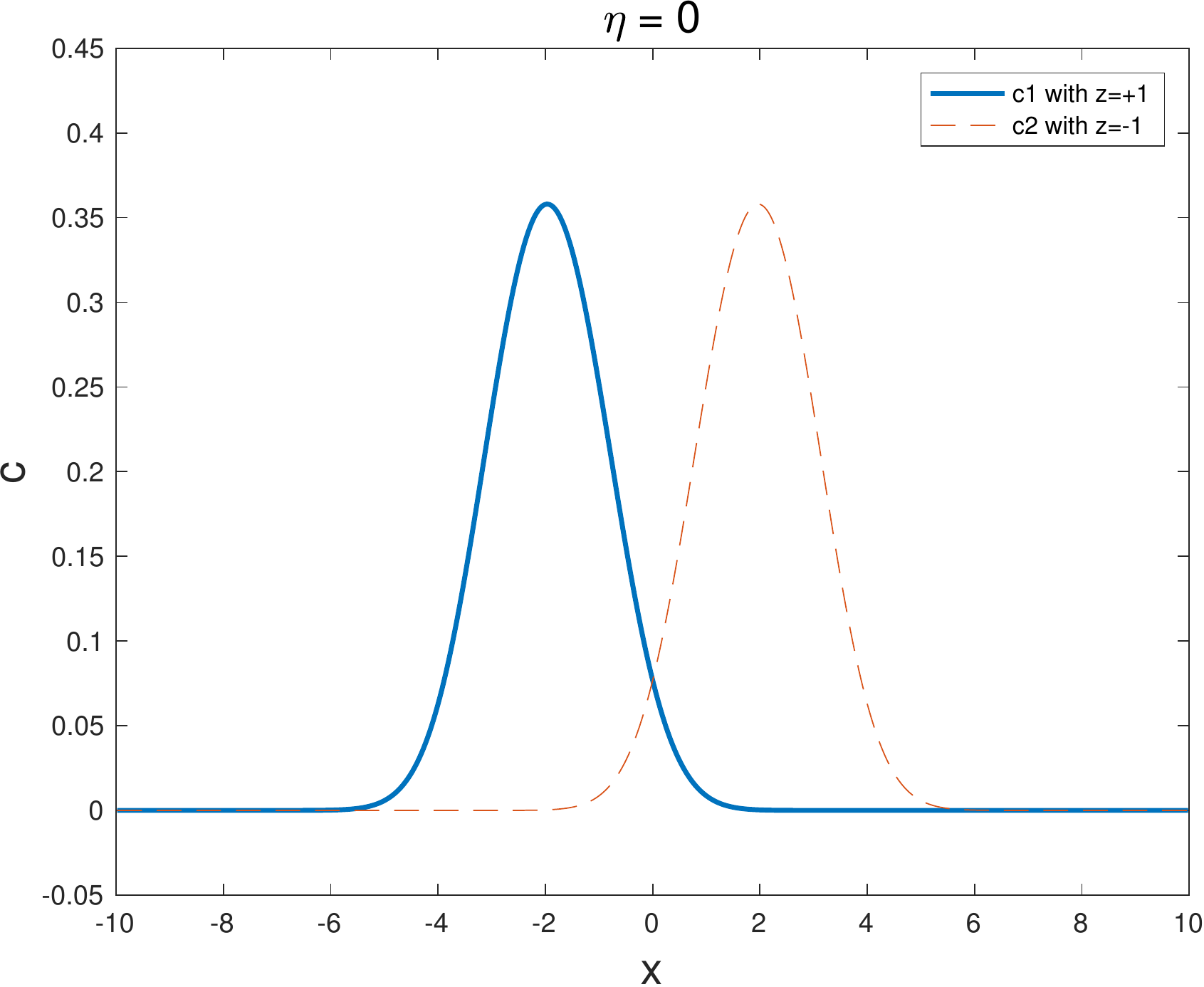}
		\end{minipage}
	}
	\caption{Example 3: The steady state density solutions $c_m$ with different $\eta$ with the mesh size $\Delta x$ being 0.0390625}
	\label{ex3eta}
\end{figure}

\subsection{Example 4}
As for another way in  in \cite{Eisenberg2020, Eisenberg2017} to model ionic and water flows  which includes voids, polarization effect of water, and ion-ion and ion-water correlations in electrolyte solutions, the correlated electric potential $\Phi(\boldsymbol{x})$ can be considered as a convolution of potential  ${\Phi}_{\mathcal{K}}(\boldsymbol{x})$ obtained from (\ref{phik}) with the exponential van der Waals potential kernel \cite{PhysRevLett2004, Eisenberg2017, Translation1979, Eisenberg2016} 
$$
W\left(\boldsymbol{x}\right)=\frac{e^{-\left|\boldsymbol{x}\right| / l_{c}}}{\left|\boldsymbol{x}\right| / l_{c}},
$$
where $l_c$ is the correlation length, i.e.
\begin{equation}
\label{phi1}
\Phi(\boldsymbol{x})=\int_{\mathbb{R}^d} \frac{1}{l_{c}^{2}} W\left(\boldsymbol{x}-\boldsymbol{x}^{\prime}\right) {\Phi}_{\mathcal{K}}\left(\boldsymbol{x}^{\prime}\right) d \boldsymbol{x}^{\prime}.
\end{equation}
We rewrite ${\Phi}_{\mathcal{K}}\left(\boldsymbol{x}\right)$ (\ref{K}) (\ref{phik}) in the convolution form and then (\ref{phi1}) becomes
\begin{equation}
\label{phi2}
\Phi(\boldsymbol{x})=\int_{\mathbb{R}^d} \frac{1}{l_{c}^{2}} W\left(\boldsymbol{x}-\boldsymbol{x}^{\prime}\right) \int_{\mathbb{R}^d} \mathcal{K}\left(\boldsymbol{x}^{\prime}-\boldsymbol{x}^{\prime \prime}\right) \rho\left(\boldsymbol{x}^{\prime \prime}\right) d \boldsymbol{x}^{\prime \prime}
d \boldsymbol{x}^{\prime}.
\end{equation}

Next we apply our numerical method to a one-dimension example to show a simple numerical exploration on such modeling phenomenon. The test model is not physically relevant in one-dimension, and thus this numerical example is a toy model. However, it is easy to extend it to high-dimensional cases, where we omit it in this paper.

Consider the kernel $\mathcal{K}(x) = \exp(-|x|)$, $W\left(x\right)=\frac{e^{-\left|x\right| / l_{c}}}{\sqrt{(x^2 + \epsilon^2)} / l_{c}},\epsilon = \frac{1}{10}$, $V_{\text{ext}}(x) = \frac{1}{2} x^2$ and the initial conditions (\ref{model4}) are given by (\ref{iniex1}).
In this part, we take the parameter $l_c = 7.44, 1, \frac{1}{64}$, the computation domain as $[-2L, 2L], \ L = 10$ and the uniform mesh size $\Delta x = 0.0390625\ (N = 2^{10})$. 
The results with which we are concerned are on the domain $[-L, L]$. Then Figure \ref{lc} shows different steady state solutions with different values of $l_c$. The concentrations of the positive ions and the negative ions move towards each other due to the electrostatic attraction with time $t$ and the concentrations move more closely to each other near $x = 0$ for smaller $l_c = \frac{1}{64}$ when they converge to the equilibrium and on the contrary, the concentrations of the steady state move a little further away from each other near $x = 0$ for larger $l_c = 7.44$.

\begin{figure}[htp] 
	\subfigure{
		\begin{minipage}[t]{0.33\linewidth}
			\centering
			\includegraphics[width=1.0\linewidth]{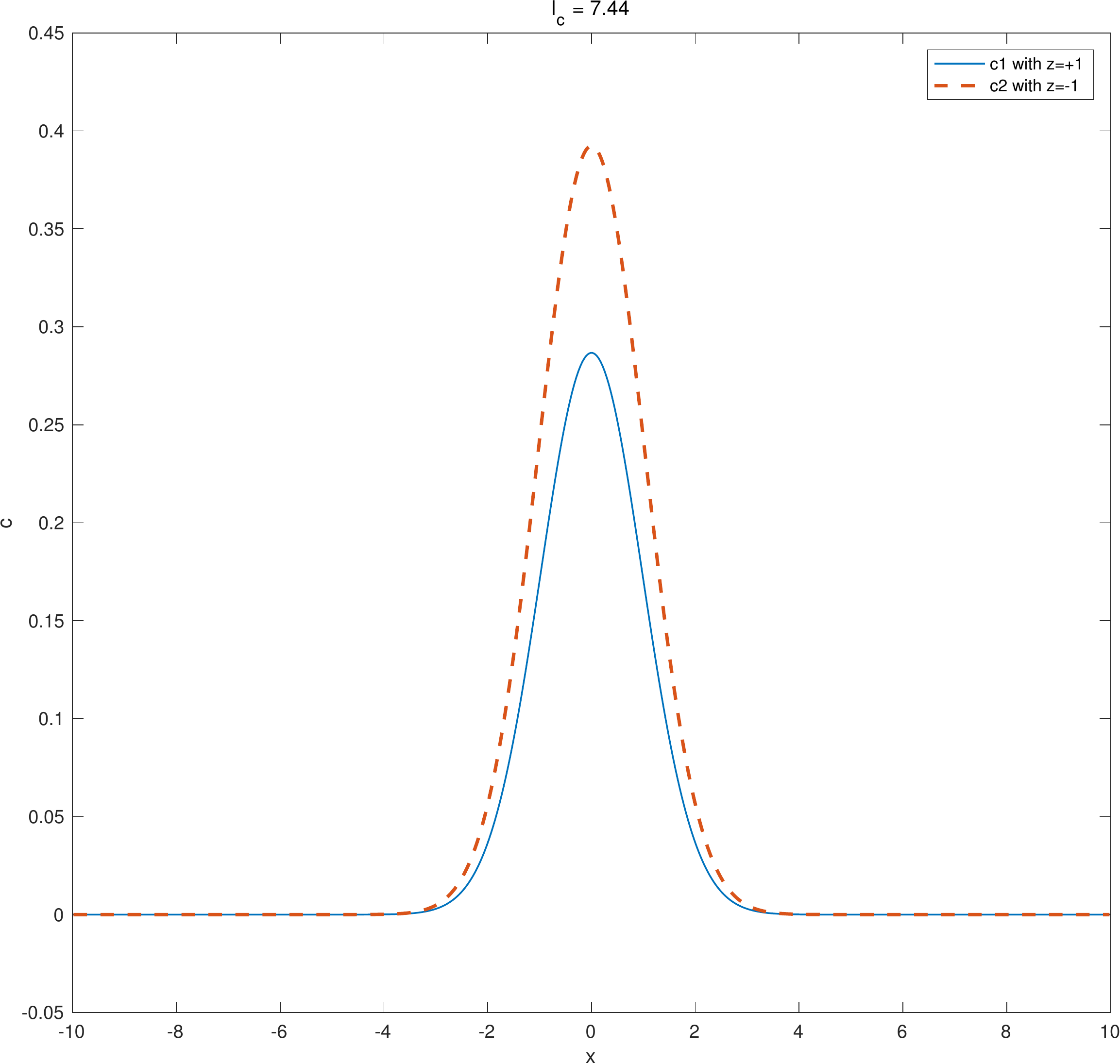}
		\end{minipage}
		\begin{minipage}[t]{0.33\linewidth}
			\centering
			\includegraphics[width=1.0\linewidth]{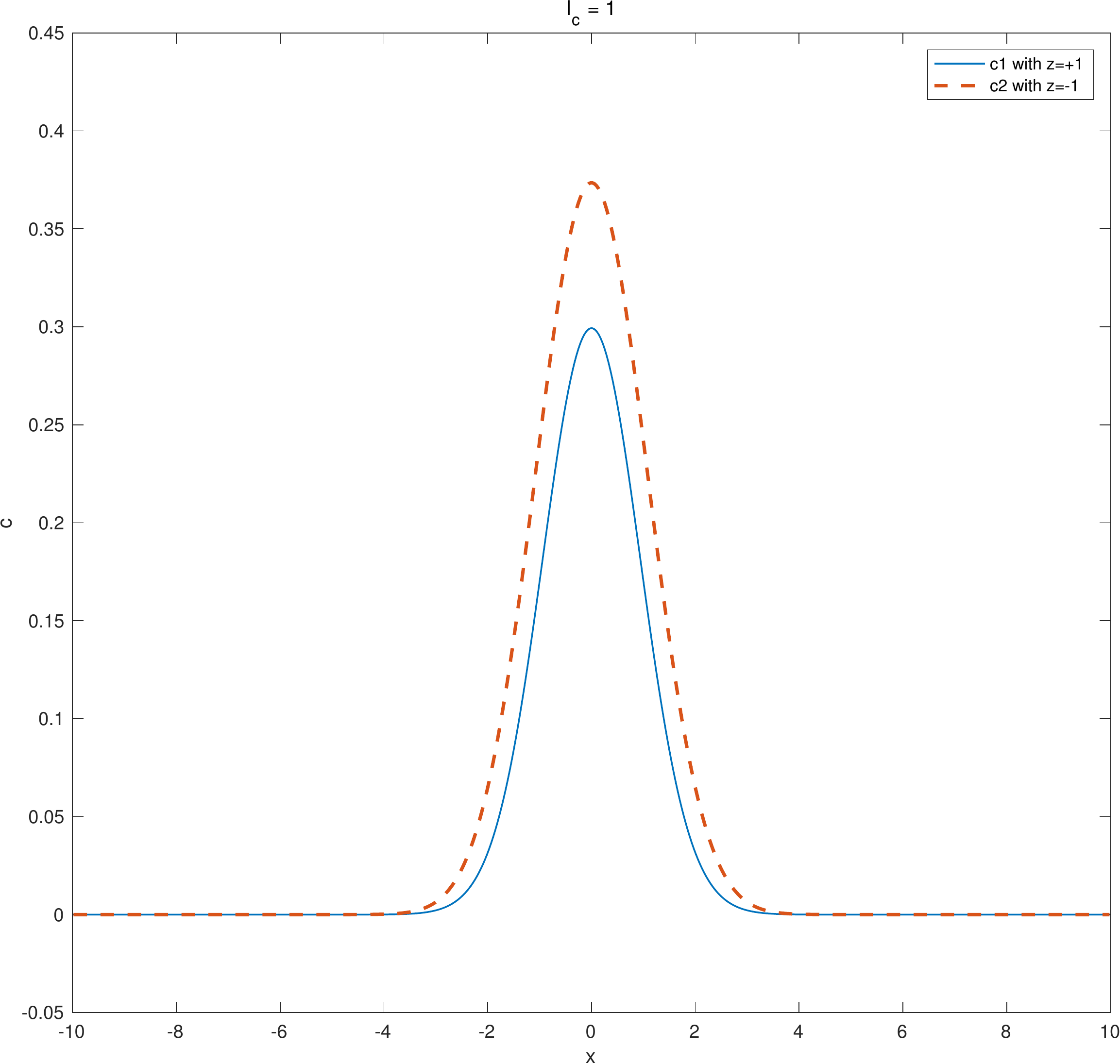}
		\end{minipage}
		\begin{minipage}[t]{0.33\linewidth}
			\centering
			\includegraphics[width=1.0\linewidth]{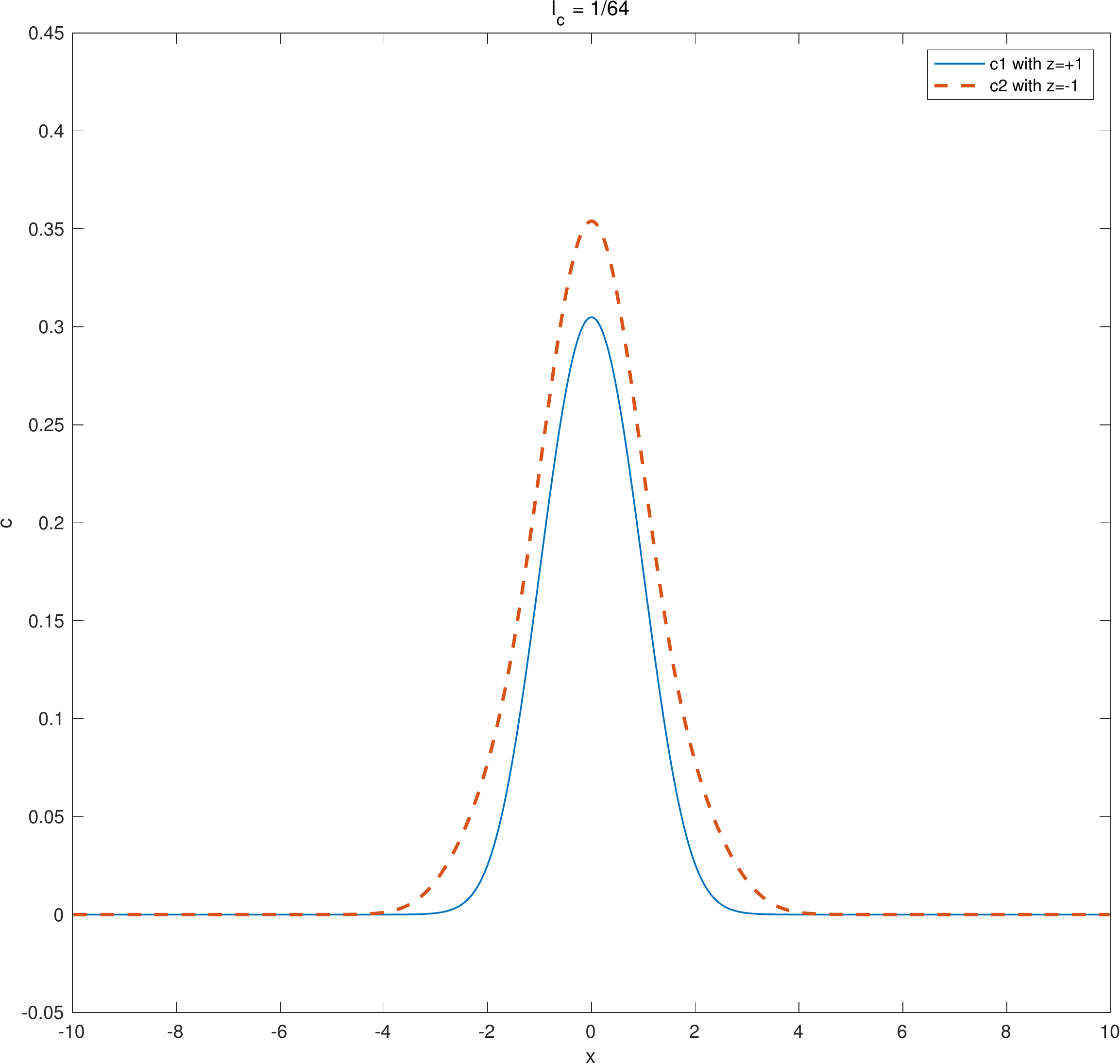}
		\end{minipage}
	}
	\caption{Example 4: The steady state density solutions $c_m$ with different $l_c$}
	\label{lc}
\end{figure}

\section{Conclusion}
\label{sec5}
In this paper, we focus on the model for complex ionic fluids proposed by EnVarA method and analyze the basic properties of the Cauchy problem of it different forms of the electrostatic potential and the steric repulsion of finite size effect and capture the well-posedness with certain regularized kernel. Then a finite volume scheme to the field system in 1D and 2D cases is proposed to observe the transport of the ionic species and verify the basic properties, such as positivity-preserving, mass conservation and discrete free energy dissipation. We also provide series of numerical experiments to demonstrate the small size effect in the model. More appropriate methods will be discussed in the following papers.

\bibliographystyle{plain}
\bibliography{paper01_new}
	
\end{document}